\title{Overgroups of exterior powers of an elementary group.\\Normalizers}
\author{Roman~Lubkov}
\address[Roman~Lubkov]{Saint Petersburg University, 7/9 Universitetskaya nab., St. Petersburg, 199034 Russia}
\email{r.lubkov@spbu.ru, RomanLubkov@yandex.ru}
\dedicatory{In memory of\\
	Nikolai Aleksandrovich Vavilov,\\
	our teacher, a brilliant mathematician, and a generous colleague}
\author{Ilia~Nekrasov}
\address[Ilia~Nekrasov]{Department of Mathematics, University of Michigan, Ann Arbor, MI}
\email{inekras@umich.edu, geometr.nekrasov@gmail.com}
\thanks{The first author is supported by ``Native towns'', a social investment program of PJSC ``Gazprom Neft'' and by the Ministry of Science and Higher Education of the Russian Federation (agreement no. 075–15–2022–287).}
\keywords{General linear group, exterior power, elementary subgroup, invariant forms, Plucker polynomials, linear preserver problems}
\subjclass{20G35, 14L15, 15A69}
\date{}
\let\opn\operatorname
\DeclareMathOperator{\E}{E}
\DeclareMathOperator{\M}{M}
\DeclareMathOperator{\SO}{SO}
\DeclareMathOperator{\GO}{GO}
\DeclareMathOperator{\SL}{SL}
\DeclareMathOperator{\GL}{GL}
\DeclareMathOperator{\Sp}{Sp}
\DeclareMathOperator{\GSp}{GSp}
\DeclareMathOperator{\sign}{sgn}
\DeclareMathOperator{\Plu}{Plu}
\DeclareMathOperator{\Fix}{Fix}
\DeclareMathOperator{\Lie}{Lie}
\DeclareMathOperator{\Tran}{Tran}
\renewcommand{\trianglelefteq}{\trianglelefteqslant}
\renewcommand{\leq}{\leqslant}
\renewcommand{\geq}{\geqslant}
\newcommand{\bw}[1]{\mathord{\raisebox{2pt}
		{\hbox{$\scriptstyle{\bigwedge^{\!#1}}$}}}}
\newcommand\blank{\mathord{\hbox to 1.5ex{\hrulefill}}\,}
\theoremstyle{plain}
\newtheorem{theorem}{Theorem}
\newtheorem{prop}[theorem]{Proposition}
\newtheorem{lemma}[theorem]{Lemma}
\newtheorem{corollary}[theorem]{Corollary}
\theoremstyle{remark}
\newtheorem{remark}[theorem]{Remark}
\begin{document}
	
	\begin{abstract}
         We establish two characterizations of an algebraic group scheme $\bw{m}\GL_n$ over $\mathbb{Z}$. Geometrically, the scheme $\bw{m}\GL_n$ is a stabilizer of an explicitly given invariant form or, generally, an invariant ideal of forms. Algebraically, $\bw{m}\GL_n$ is isomorphic (as a scheme over $\mathbb{Z}$) to a normalizer of the elementary subgroup functor $\bw{m}\E_n$ and a normalizer of the subscheme $\bw{m}\SL_n$.

         Our immediate goal is to apply both descriptions in the ``sandwich classification'' of overgroups of the elementary subgroup. Additionally, the results can be seen as a solution of the linear preserver problem for algebraic group schemes over $\mathbb{Z}$, providing a more functorial description that goes beyond geometry of the classical case over fields.
	\end{abstract}
    \vspace*{-1.2cm} 
	\maketitle

	\section*{Introduction}\label{sec:intro}
	
	The present work is a sequel of~\cite{LubNek-OverI} where we have started the description of overgroups of exterior powers of an elementary group. In this paper, we carry out the second key step of the description: an explicit calculation of the normalizer of elementary groups in the corresponding general linear group.
	
	In the case when $n$ is a multiple of $m$, we construct an $R$--linear form $f \colon V\times\dots\times V \longrightarrow R$ in $k$ variables, where $V=V(\varpi_m)$ and $R$ is an arbitrary commutative ring. We prove that $\bw{m}\SL_n$ coincides with the algebraic group $G_f$ of linear transformations preserving this form $f$:
	$$G_f(R):=\Big\{g\in\GL_{\binom{n}{m}}(R)\mid f(gx^1,\ldots,gx^k)=f(x^1,\ldots,x^k)\Big\}.$$
	We deliver analogous description for $\bw{m}\GL_n$ in terms of the form $f$. Namely, this group scheme is equal to the stabilizer $	\overline{G}_f$ of the ideal generated by the form $f$:
	\begin{align*}
		\overline{G}_f(R)&:=\Big\{g\in\GL_{\binom{n}{m}}(R)\mid g \text{ preserves the ideal }\langle f\rangle\Big\}.
	\end{align*}
	
	\begin{restatable}{theorem}{exGLandForms}
		\label{thm:exGLandForms}
		If $\sfrac{n}{m}$ is an integer greater than 2, then there are isomorphisms $\bw{m}\SL_n\cong G_f$, $\bw{m}\GL_n\cong\overline{G}_f$ of affine group schemes over $\mathbb{Z}$.
	\end{restatable}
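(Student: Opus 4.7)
The plan is to prove both isomorphisms by establishing the tautological inclusions $\bw{m}\SL_n\hookrightarrow G_f$ and $\bw{m}\GL_n\hookrightarrow\overline{G}_f$ and then the much harder reverse inclusions, working functorially in the test ring $R$ so that equality of $R$-points upgrades to an isomorphism of group schemes over $\mathbb{Z}$. The easy direction is a direct computation: after identifying $\bw{n}R^n\cong R$ via the standard generator, the form $f$ becomes, up to normalization, the $k$-fold wedge product on $\bw{m}R^n$ with $k=n/m$, so for $h\in\GL_n(R)$ the operator $\bw{m}h$ sends $f$ to $\det(h)\cdot f$. This gives $\bw{m}\SL_n\subseteq G_f$ and $\bw{m}\GL_n\subseteq\overline{G}_f$, and identifies the induced character on $\overline{G}_f$ as $\det$.

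For the reverse containment, the heart of the proof, I would argue that any $g\in\overline{G}_f(R)$ preserves the Plücker locus of decomposable vectors inside $\bw{m}R^n$. The Plücker relations cutting out this subscheme can be extracted from $f$ by a purely functorial construction---specializing, repeating, or contracting its arguments against auxiliary vectors produces (up to integral combinations) the required quadrics. Preservation of $\langle f\rangle$ by $g$ therefore entails preservation of these derived relations, hence of the Grassmannian $\mathrm{Gr}(m,n)$ in its Plücker embedding. An integral form of Chow's theorem on automorphisms of Grassmannians, applicable under the hypothesis $n/m>2$ that excludes the self-duality of $\mathrm{Gr}(m,2m)$, then produces a unique $h\in\GL_n(R)$ with $g=\bw{m}h$. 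The scaling computation above forces $\det(h)=1$ in the $G_f$ case and leaves $h$ free in the $\overline{G}_f$ case, completing both identifications.

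The main obstacle will be the scheme-theoretic upgrade of these essentially pointwise arguments. Each step---extracting the Plücker relations from $f$, proving Grassmannian preservation, and descending $g$ to an element of $\GL_n$---must commute with arbitrary base change $\mathbb{Z}\to R$, so any argument that implicitly inverts primes or passes to an algebraic closure must be replaced by a functorial substitute; this is where integrality of the constructed form $f$ and the explicit nature of the Plücker embedding over $\mathbb{Z}$ will be crucial. The hypothesis $n/m>2$ is used precisely at the Chow step to guarantee that the recovered $h$ is unique, not only up to the duality automorphism of $\mathrm{Gr}(m,2m)$.
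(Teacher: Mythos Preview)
Your plan diverges substantially from the paper's argument, and the divergence is precisely at the hard step. The paper does \emph{not} attempt to recover the Pl\"ucker quadrics from $f$ or to invoke any Chow-type classification of Grassmannian automorphisms over a general base. Instead it applies Waterhouse's criterion (Lemma~\ref{lem:Waterhouse}): one checks that the morphism $\iota\colon\GL_n/\mu_m\to\overline{G}_f$ is a monomorphism on $K$- and $K[\delta]$-points for algebraically closed $K$, that the normalizer condition holds (via Seitz's maximality result, Lemma~\ref{lem:VPMax}), and that $\dim_K\Lie(\overline{G}_{f,K})\le n^2$. The last inequality is the technical core and is established by an explicit linear-algebra computation (Theorem~\ref{thm:dimLie}) substituting weight vectors into the polarized form. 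No geometry of the Grassmannian enters beyond what is already packaged into Lemma~\ref{lem:VPMax}.

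Your proposed route has two genuine gaps. First, the assertion that the Pl\"ucker relations can be ``extracted from $f$ by a purely functorial construction'' is not substantiated: $f$ has degree $k=n/m\ge 3$, the Pl\"ucker relations are quadratic, and the operations you list (specializing or contracting arguments) produce forms of degree $k-1$ or $k-2$ in the remaining variables, not the required quadrics in a single copy of $\bw{m}R^n$. For odd $m$ the repetition $f(v,v,\dots)$ vanishes identically; for even $m$ the condition $v\wedge v=0$ is strictly weaker than decomposability once $m\ge 4$. So this step needs an actual argument, and it is not clear one exists in the integral, functorial form you require. Second, the ``integral form of Chow's theorem'' you invoke is essentially the content of the paper's Theorem~\ref{thm:VPWaterhouse} (the identification of the Pl\"ucker-ideal stabilizer with $\GL_n/\mu_m$), and that result is itself proved in~\cite{VavPere} by the same Waterhouse machinery, not by a direct descent argument. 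Thus your plan, if completed, would either recreate the paper's method inside the black box or require a genuinely new functorial Chow theorem that neither you nor the paper supplies.
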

	
	The theorem follows the traditional description of a Chevalley group as a stabilizer of a form and the corresponding extended Chevalley group as the stabilizer up to a scalar multiplier, see \cite{ChevalleyClassificationDG5658}.
	
	In the case when $n$ is not divisible by $m$, we construct an ideal $F$, a direct generalization of $\langle f\rangle$, such that $\bw{m}\GL_n$ coincides with a stabilizer of this ideal:
	\begin{align*}
		\overline{G}_F(R)&:=\Big\{g\in\GL_{\binom{n}{m}}(R)\mid g \text{ preserves the ideal }F \Big\}.
	\end{align*}
	
	\begin{restatable}{theorem}{exGLandFormsIdeal}
		\label{thm:exGLandFormsIdeal}
		Using prior notation, $\bw{m}\GL_n$ and $\overline{G}_F$ are isomorphic as affine group scheme over $\mathbb{Z}$.
	\end{restatable}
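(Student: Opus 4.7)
The plan is to establish the isomorphism by verifying the two inclusions $\bw{m}\GL_n(R)\subseteq\overline{G}_F(R)$ and $\overline{G}_F(R)\subseteq\bw{m}\GL_n(R)$ functorially in the commutative ring $R$, thereby obtaining the desired isomorphism of affine group schemes over $\mathbb{Z}$. The whole argument follows the blueprint of Theorem~\ref{thm:exGLandForms}, but with the scalar form $f$ replaced throughout by its vector-valued generalization whose components generate $F$. The first inclusion is the easy direction: by construction $F$ is stable under the natural $\GL_n$-action on $\bw{m}R^n$, with each chosen generator transforming by a power of the determinant character. Since $\det h \in R^\times$ for $h\in\GL_n(R)$, the image $\bw{m}h$ carries every generator of $F$ to a unit multiple of itself and therefore stabilizes the ideal.

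For the reverse inclusion I would show that preserving $F$ forces $g$ to preserve the Plücker relations on $\bw{m}R^n$, and then invoke the classical rigidity statement: linear automorphisms of $\bw{m}R^n$ that respect the Plücker locus are exactly the elements of $R^\times\cdot\bw{m}\GL_n(R)$, modulo a duality factor in the exceptional case $n=2m$. A convenient route is to verify Plücker compatibility pointwise over residue fields, where the Grassmannian geometry is classical, and then lift back to an arbitrary $R$ using representability of both functors as closed subschemes of $\GL_{\binom{n}{m}}$ together with a faithfully flat descent argument. An alternative route, closer in spirit to the divisible case, is to embed the non-divisible problem into a divisible one of larger rank and pull the conclusion of Theorem~\ref{thm:exGLandForms} back along that embedding.

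The hard part will be identifying, among the components of the vector-valued form, a subfamily of generators of $F$ whose transformation law is tractable and which collectively cut out the Plücker variety when $m\nmid n$. Unlike the divisible case, where $f$ lands in the one-dimensional module $\bw{n}R^n$ and yields a single scalar equation, the non-divisible case produces relations living in higher-rank modules, and one must include enough of them in $F$ to rigidify $\GL_{\binom{n}{m}}$ down to $\bw{m}\GL_n$ while avoiding spurious constraints that would shrink the group further. I expect the required combinatorial identities to follow from the Plücker polynomial machinery already developed in~\cite{LubNek-OverI}, together with an application of Theorem~\ref{thm:exGLandForms} to auxiliary subrepresentations indexed by $m$-divisible subsets of $\{1,\ldots,n\}$.
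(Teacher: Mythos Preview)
Your plan diverges substantially from the paper's proof and, as written, has real gaps.

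The paper does \emph{not} attempt to prove the inclusion $\overline{G}_F(R)\subseteq\bw{m}\GL_n(R)$ directly for arbitrary $R$. Instead it reruns the Waterhouse machinery of Lemma~\ref{lem:Waterhouse} exactly as in Theorem~\ref{thm:exGLandForms}: first it checks that $\overline{G}_F$ is an affine group scheme (via Lemma~\ref{lem:StabAffGrpSch}, showing the generators $f_{V_j}$ stay independent modulo every prime); then, over an algebraically closed field, Proposition~\ref{prop:LGLeqStabIdealOverK} gives $\bw{m}\GL_n(K)=\overline{G}^0_F(K)$ from the maximality in Lemma~\ref{lem:VPMax}; finally Theorem~\ref{dimLieIdeal} bounds $\dim\Lie(\overline{G}_{F,K})\leq n^2$ by the same explicit substitutions $x^l=e_{I_l}$ used in Theorem~\ref{thm:dimLie}. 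Those three ingredients feed into Lemma~\ref{lem:Waterhouse} and give the isomorphism over $\mathbb{Z}$.

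Your proposed route---show that preserving $F$ forces preservation of the Pl\"ucker ideal and then quote Theorem~\ref{thm:VPWaterhouse}---is not justified. As noted in the introduction, $F$ is a \emph{proper subideal of $\sqrt{\Plu}$}; the Pl\"ucker quadrics are not in $F$, and there is no evident way to recover them from the degree-$l$ generators $f_{V_j}$. So ``$g$ stabilizes $F$ $\Rightarrow$ $g$ stabilizes $\Plu$'' is exactly the nontrivial content of the theorem, not a step one can invoke. Your fallback---check over residue fields and descend---also fails as stated: agreement of two closed subgroup schemes of $\GL_N$ on all field-valued points does not give an isomorphism unless you know $\overline{G}_F$ is smooth (equivalently, control its tangent spaces). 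That smoothness is precisely what the Lie-algebra bound of Theorem~\ref{dimLieIdeal} supplies, and it is the part of the argument your plan omits. The ``embed into a divisible rank'' alternative is likewise undeveloped: the form $f^m_{[n']}$ on $\bw{m}R^{n'}$ does not restrict to anything that cuts out $\bw{m}\GL_n$ inside $\GL_{\binom{n}{m}}$, so Theorem~\ref{thm:exGLandForms} for $n'$ does not pull back.

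In short, the missing idea is the tangent-space computation: you need to show $\dim\Lie(\overline{G}_{F,K})\leq n^2$ by plugging basis vectors into the identities $f_{V_j}(gx^1,\dots,gx^k)=\lambda_{V_j}f_{V_j}+\sum c(V_j,V_l)f_{V_l}$ with $g=e+y\delta$, recovering the same three families of linear constraints on the $y_{I,J}$ as in Lemma~\ref{lem:VPdimLie}. Once you have that, the Waterhouse lemma finishes the job and the ad hoc descent and Pl\"ucker-reduction steps become unnecessary.
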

    Analogous description for the general case of $n$ and $m$ can be found in \cite{VavPere}, as we discuss in Section~\ref{sec:StabPlu}. Indeed, the group scheme $\bw{m}\GL_n$ is a stabilizer of the Pl\"ucker ideal $\Plu$ generated by Pl\"ucker \textit{quadratic} forms. However, our description goes further then just taking a subideal of $\Plu$: ideal $F$ from Theorem~\ref{thm:exGLandFormsIdeal} is a proper subideal of the radical $\sqrt{\Plu}$ with some nice properties.
 
    In the theory of linear preserver problem and, more generally, in geometric invariant theory there exists a classic geometric interpretation of a normalizer $N_{\GL(V)}(G)$ of a group $G$ acting irreducibly on a vector space $V$: $N_{\GL(V)}(G)$ is equal to $\mathrm{Stab}_{\GL(V)}(\mathcal{O})$, where $\mathcal{O}$ is a closed $G$-orbit in $\mathbb{P}(V)$ (we invite reader to consult~\cite[Theorem 3.2.]{BerGariLar2014} and references there). Theorems~\ref{thm:exGLandForms} and~\ref{thm:exGLandFormsIdeal} can be seen as an example of a \textit{scheme-theoretic} incarnation of the statement. Authors hope to pursue this direction for wider class of groups in a future publication.
	
	Now let $C,D$ be two subgroups of an abstract group $G$. Recall that the \textit{transporter} of $C$ to $D$ is the set:
	$$ 
	\Tran_G(C,D)=\{g\in G\mid C^g \le D\}. 
	$$
 
    We need a scheme-theoretic analogue~\cite[Section V.6.]{MilneAGS}: \textit{scheme-theoretic transporter} of $X$ to $Y$ inside an algebraic group $G$ is the functor $\Tran_{G}(X,Y)$ such that
    \[
    \Tran_{G}(X,Y)(R) = \{g\in G(R)\mid z^g\in Y(\tilde{R}) \text{ for all $R$-algebras }\tilde{R}\text{ and } z\in X(\tilde{R})\}.
    \]
    The scheme-theoretic normalizer $N_G(X)$ is defined as a scheme-theoretic transporter $\Tran_{G}(X,X)$.
 
    We denote the elementary subgroup of $\GL_n(R)$ by $\E_n(R)$ and the corresponding $m$-th exterior power of the elementary group by $\bw{m}\E_n(R)$. The following is our second result.
 
	\begin{theorem}\label{thm:NNTran}
        If $n \geq 4$ and $\sfrac{n}{m}$ is an integer greater than 2, then there are isomorphisms of the affine algebraic group schemes over $\mathbb{Z}$:
        $$N\bigl(\bw{m}\E_n\bigr) \cong N\bigl(\bw{m}\SL_{n}\bigr) \cong \Tran\bigl(\bw{m}\E_n, \bw{m}\SL_{n}\bigr) \cong \Tran\bigl(\bw{m}\E_n, \bw{m}\GL_{n}\bigr) \cong \bw{m}\GL_{n},$$
	where all scheme-theoretic normalizers and transporters are taken inside $\GL_{\binom{n}{m}}$.
	\end{theorem}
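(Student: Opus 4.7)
The plan is a squeeze argument built around the form--theoretic description from Theorem~\ref{thm:exGLandForms}. First, I would observe that $\bw{m}\GL_n$ embeds as a subfunctor of each of the four functors, that those form a chain inside $\GL_{\binom{n}{m}}$, and that the top of the chain is $\Tran(\bw{m}\E_n, \bw{m}\GL_n)$. It will then be enough to prove the single reverse inclusion $\Tran(\bw{m}\E_n, \bw{m}\GL_n) \subseteq \bw{m}\GL_n$ to collapse everything to $\bw{m}\GL_n$.

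The easy containments are formal. Since $\bw{m}\E_n \trianglelefteq \bw{m}\SL_n \trianglelefteq \bw{m}\GL_n$, conjugation by $\bw{m}\GL_n$ stabilizes both $\bw{m}\E_n$ and $\bw{m}\SL_n$, so $\bw{m}\GL_n$ sits inside both scheme--theoretic normalizers as well as inside both transporters. A normalizer is a transporter into itself, and the inclusions $\bw{m}\E_n \subseteq \bw{m}\SL_n \subseteq \bw{m}\GL_n$ yield $N(\bw{m}\E_n),\ N(\bw{m}\SL_n) \subseteq \Tran(\bw{m}\E_n, \bw{m}\SL_n) \subseteq \Tran(\bw{m}\E_n, \bw{m}\GL_n)$.

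For the nontrivial closure I would invoke Theorem~\ref{thm:exGLandForms} to identify $\bw{m}\SL_n = G_f$ and $\bw{m}\GL_n = \overline{G}_f$. Fix $g \in \Tran(\bw{m}\E_n, \bw{m}\GL_n)(R)$. For every $R$-algebra $\tilde R$ and every $x \in \bw{m}\E_n(\tilde R)$, the element $gxg^{-1}$ preserves the principal ideal $\langle f\rangle$, so $(gxg^{-1}) \cdot f = \chi(g,x)\, f$ for some $\chi(g,x) \in \tilde R^{\times}$. A short associativity computation shows that $\chi(g,-)$ is a group homomorphism $\bw{m}\E_n(\tilde R) \to \tilde R^{\times}$. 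Since $\E_n(\tilde R)$ is perfect for $n \geq 3$, so is its homomorphic image $\bw{m}\E_n(\tilde R)$, whence $\chi(g,-) \equiv 1$ and therefore $x \cdot (g^{-1} \cdot f) = g^{-1} \cdot f$ for all such $x$. If one knows that the module of $\bw{m}\E_n$-invariant multilinear forms of the type of $f$ is free of rank one over $R$ and spanned by $f$, then $g^{-1} \cdot f = \mu f$ for a unit $\mu \in R^{\times}$, i.e.\ $g \in \overline{G}_f(R) = \bw{m}\GL_n(R)$, closing the squeeze.

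The hard part will be precisely this uniqueness of $\bw{m}\E_n$-invariants, stated functorially over $\mathbb{Z}$. Over algebraically closed fields of characteristic zero it is classical: $\bw{m}\E_n$ is Zariski dense in the simply--connected Chevalley group $\bw{m}\SL_n$, and the invariant weight--zero line for the representation hosting $f$ is one--dimensional. Upgrading this to an arbitrary commutative ring will require either a flatness/base--change argument that reduces to fields, or a direct analysis of $\bw{m}\E_n$-invariants along the lines developed in \cite{LubNek-OverI}. Once that uniqueness is secured, the chain collapses and all five objects are identified with $\bw{m}\GL_n$.
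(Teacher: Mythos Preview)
Your squeeze argument via Theorem~\ref{thm:exGLandForms} is exactly the paper's approach: the paper likewise reduces everything to showing that an element of the transporter must lie in $\overline{G}_f$, and the key input is precisely the uniqueness of $\bw{m}\E_n$--invariant multilinear forms of multidegree $(1,\dots,1)$, which the paper establishes as Proposition~\ref{prop:FormsGoodRing} by a direct combinatorial analysis of how exterior transvections act on coefficients. Your perfectness trick for killing the scalar $\chi(g,x)$ is equivalent to the paper's observation that the semi--invariance scalar of $z^g$ equals $\det(z^g)=\det(z)=1$ for $z\in\bw{m}\E_n$; the paper also cites \cite{LubStepSub} to identify the first three functors, whereas your formal containments do the same job more directly.
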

 
    According to the results of \cite{LubStepSub} and forthcoming \cite{LubStepSubE}, we can replace the normalizers and transporters with their group-theoretic analogues for some classes of rings $R$. For example, $\Tran\bigl(\bw{m}\E_n(R), \bw{m}\SL_{n}(R)\bigr)$ coincides with $\Tran\bigl(\bw{m}\E_n, \bw{m}\SL_{n}\bigr)(R)$ for algebras $R$ over infinite fields, see \cite[Proposition 4.3]{LubStepSub}. In other words, the classic version of Theorem~\ref{thm:NNTran} with abstract transporters holds as well over these rings, see~\cite[Theorem~3]{VP-Ep,VP-EOodd,VavLuzgE7}, \cite[Theorem~2]{VavLuzgE6}, \cite[Theorem~4]{AnaVavSinI} for analogues in other cases.

	The paper is organized as follows. In~\S\ref{sec:definition} we present the basic notation. 
	We recall the well-known description using the Pl\"ucker polynomials  in~\S\ref{sec:StabPlu}, we construct an invariant form for $\bw{m}\GL_n$ for the case $\sfrac{n}{m}\in\mathbb N$ in~\S\ref{sec:StabI}, and, in~\S\ref{sec:StabII}, we generalize the latter description to an invariant system of forms for any $n,m$. \S\ref{sec:DiffExterPwrs} gives a geometric description of the quotient $\bw{m}\GL_n(R)$ by $\bw{m}\bigr(\GL_n(R)\bigr)$. Finally, in~\S\ref{sec:transgood} we discuss different notions of normalizers and transporters and prove Theorem~\ref{thm:NNTran}.
	
	\section{Exterior powers of elementary groups}\label{sec:definition}
	
	In this section, we introduce exterior powers of an elementary group and define the related concepts.
	
	We denote the set $\{1,2,\dots, n\}$ by $[n]$. \textit{If there is no confusion, we denote the binomial coefficient $\binom{n}{m}$ by $N$}. Elements of $\bw{m}[n]$, the $m$-th exterior power of the set $[n]$, are ordered subsets $I\subseteq [n]$ of cardinality $m$ without repeating entries:
	$$\bw{m}[n] = \{(i_{1},\dots,i_{m})\mid 1\leq i_1<i_2<\dots<i_m\leq n \}.$$
	
	Let $R$ be a commutative ring and let $R^n$ be the right free $R$-module with the standard basis $\{e_1,\dots,e_n\}$. $\bw{m}R^n$ is a free module of rank $N=\binom{n}{m}$ with the basis $e_{i_1}\wedge\dots\wedge e_{i_m}$ with $(i_1,\dots,i_m) \in \bw{m}[n]$. The products $e_{i_1}\wedge\dots\wedge e_{i_m}$ are defined for an arbitrary set $\{i_1,\dots,i_m\}$ via $e_{\sigma(i_{1})}\wedge\dots\wedge e_{\sigma(i_{m})} = \sign(\sigma)\, e_{i_1} \wedge \dots \wedge e_{i_{m}}$ for $\sigma \in S_m$ a permutation of $[m]$. We can assume that  $n \geq 2m$ due to the isomorphism $\bw{m}V^{*} \cong (\bw{\dim(V)-m}V)^{*}$ for an arbitrary free $R$-module $V$.
	
	For every $m\leq n$, we have Cauchy--Binet homomorphism $\bw{m}\colon \GL_n(R) \longrightarrow \GL_N(R)$ defined via the diagonal action:
	$$\bw{m}(g)(e_{i_1}\wedge\dots\wedge e_{i_m}):=(ge_{i_1})\wedge\dots\wedge (ge_{i_m}) \text{ for }e_{i_1},\dots,e_{i_m}\in R^n.$$
	Thus $\bw{m}$ is a representation of the group $\GL_n(R)$. It is called the $m$-th \textit{vector representation} or the $m$-th \textit{fundamental representation}. The image group $\bw{m}\bigr(\GL_n(R)\bigr)$ is called the $m$-th exterior power of the general linear group. 
	
	By $a_{i,j}$ we denote an entry of a matrix $a\in\GL_n(R)$ at the position $(i,j)$, where $1\leq i,j\leq n$. Further, $e$ denotes the identity matrix and $e_{i,j}$ denotes the standard matrix unit, i.e. the matrix that has $1$ at the position $(i,j)$ and zeros elsewhere. For entries of the inverse matrix we use the standard notation $a_{i,j}':=(a^{-1})_{i,j}$. The \textit{$[$absolute$]$ elementary group} $\E_n(R)$ is a subgroup of $\GL_n(R)$ generated by all elementary transvections $t_{i,j}(\xi)=e+\xi e_{i,j}$, where $1\leq i\neq j\leq n$, $\xi\in R$. The set $\E^l(n,R)$ is a subset of $\E_n(R)$ consisting of products of at most $l$ elementary transvections. The exterior power of the elementary group $\bw{m}\E_n(R)$ is defined as the $\bw{m}$--image of the elementary group $\E_n(R)$.
	
	In the sequel, we use weight diagrams to illustrate internal combinatorics of equations. We refer the reader to~\cite{atlas} where the authors describe all the rules to construct weight diagrams. The exterior power of the elementary group $\bw{m}\E_n(R)$ corresponds to the representation of the Chevalley group of type $\Phi=A_{n-1}$ with the highest weight $\varpi_m$.
	
	In the majority of existing constructions, $\bw{m}\GL_n(R)$ arises together with an action on the Weyl module $V(\varpi_m) = R^N$. 
	We denote the weight set of the module $V(\varpi_m)$ by $\Lambda(\varpi_m)$. Then $\Lambda(\varpi_m) = \bw{m}[n]$. 
	
	Fix an admissible base $v^{\lambda},\lambda\in\Lambda$ of the module $V=V(\varpi_m)$. We regard a vector $a\in V$, $a=\sum v^{\lambda}a_{\lambda}$, as a column of coordinates $a=(a_{\lambda}),\lambda\in\Lambda$. 
	
	In fig.~\ref{fig:L2E5} and fig.~\ref{fig:L3E7} we reproduce the weight diagrams of the groups $\bw{2}\E_5(R)$ and $\bw{3}\E_7(R)$, which correspond to representations $(A_4,\varpi_2)$ and $(A_6,\varpi_3)$, respectively. We follow the convention of naturally ascending numbering of weights. On the diagrams, the highest weight is the leftmost one. Recall that in a weight diagram two weights are joined by an edge if their difference is a fundamental root.

	\begin{figure}[t]
		\centering 
			\includegraphics[scale=0.7]{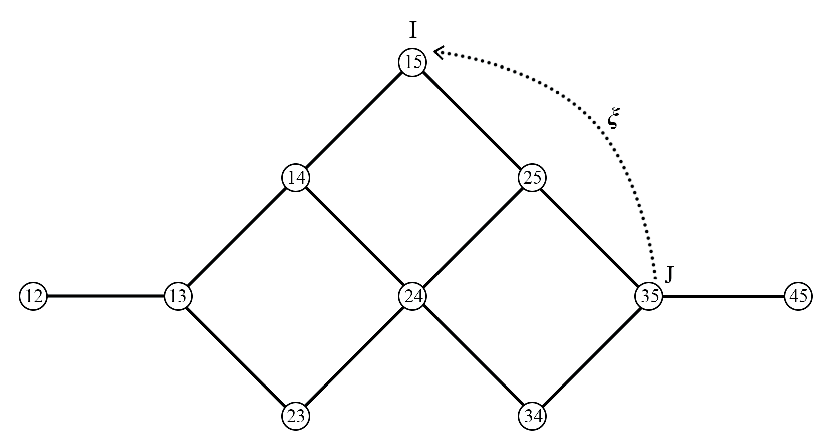}
			\caption{Weight diagram $(A_4,\varpi_2)$ and action of $t_{I,J}(\xi)$}
			\label{fig:L2E5}
	\end{figure}
	

	
	\begin{figure}[t]
		\centering
			\includegraphics[scale=1.3]{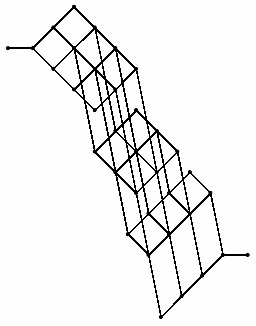}
			\caption{Weight diagram $(A_6,\varpi_3)$}
			\label{fig:L3E7}
	\end{figure}
	

	The algebraic group scheme $\bw{m}\GL_{n}$ is, by definition, the \textit{categorical} image of the group scheme $\GL_{n}$ under Cauchy--Binet homomorphism. The group $\bw{m}\GL_{n}(R)$ is defined as $R$--points of the functor $\bw{m}\GL_{n} = \bw{m}\GL_{n}(\blank)$. The [abstract] groups  $\bw{m} \bigl(\GL_{n}(R)\bigr)$ and $\bw{m}\GL_{n}(R)$ are different for a general ring $R$. We have a canonical inclusion $\bw{m} \bigl(\GL_{n}(R)\bigr) \leqslant \bw{m}\GL_{n}(R)$; the quotient set is computed in \S\ref{sec:DiffExterPwrs}.
	
	Abstractly, elements of $\bw{m}\GL_n(R)$ are images of matrices under $\bw{m}$ with entries belonging to some extension of $R$. In other words, arbitrary element $\widetilde{g}\in\bw{m}\GL_n(R)$ has the form $\widetilde{g}=\bw{m}g$ where $g \in \mathrm{GL}_n(S)$ for some extension ring $S$ of $R$.
	
	Below we show that the group $\bw{m}\SL_n(R)$ is the standard Chevalley group $G(\Phi,R)$, $\bw{m}\GL_n(R)$ is the extended Chevalley group $\overline{G}(\Phi,R)$, and $\bw{m}\E_n(R)$ coincides with the [absolute] elementary subgroup of $G(\Phi,R)$. 
	
	Recall that $\bw{m}\E_n(R)$ is normal not only in the image of the general linear group but in the bigger group $\bw{m}\GL_n(R)$. This fact follows from~\cite[Theorem~1]{PetrovStavrovaIsotropic}.
	\begin{theorem}\label{thm:normalityPS}
		Let $R$ be a commutative ring, $n\geq 3$, then
		$\bw{m}\E_n(R)\trianglelefteq\bw{m}\GL_n(R)$.
	\end{theorem}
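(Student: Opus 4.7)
The plan is to deduce this normality from a general theorem on isotropic reductive group schemes. As indicated in~\S\ref{sec:definition}, the scheme $\bw{m}\GL_n$ is the extended Chevalley group of type $\Phi=A_{n-1}$ in the representation of highest weight $\varpi_m$, and $\bw{m}\E_n$ is its elementary subfunctor, generated by the root unipotents $t_{I,J}(\xi)$ visible as edges on the weight diagrams in Figures~\ref{fig:L2E5} and~\ref{fig:L3E7}. I would take this identification as the starting point.

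With this translation in hand, the hypotheses of \cite[Theorem~1]{PetrovStavrovaIsotropic} are met: $\bw{m}\GL_n$ is a reductive group scheme over $\mathbb{Z}$ containing a split maximal torus of rank $n-1\geq 2$, hence isotropic of rank at least two; its elementary subgroup in the Petrov--Stavrova sense is exactly $\bw{m}\E_n$. Their theorem then applies verbatim and delivers $\bw{m}\E_n(R)\trianglelefteq\bw{m}\GL_n(R)$ for every commutative ring~$R$.

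As a more hands-on alternative, one can argue in two steps. First, the classical Suslin--Taddei normality $\E_n(R)\trianglelefteq\GL_n(R)$ for $n\geq 3$, together with the surjectivity of $\bw{m}\colon\GL_n(R)\twoheadrightarrow\bw{m}\bigl(\GL_n(R)\bigr)$, immediately yields $\bw{m}\E_n(R)\trianglelefteq\bw{m}\bigl(\GL_n(R)\bigr)$. The second step must handle an arbitrary element $\widetilde{g}=\bw{m}(g)\in\bw{m}\GL_n(R)$ with $g\in\GL_n(S)$ for some extension $S\supseteq R$: for $h\in\E_n(R)$ the conjugate equals $\bw{m}(ghg^{-1})$, which a priori lies only in $\bw{m}\bigl(\E_n(S)\bigr)$. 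The main obstacle is exactly this descent back to $R$, i.e.\ showing that this exterior image coincides with $\bw{m}(h')$ for some $h'\in\E_n(R)$. The Petrov--Stavrova argument bypasses the need for such a descent by working uniformly at the level of the functor of points, which is why their theorem is the natural input here.
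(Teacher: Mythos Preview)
Your proposal is correct and matches the paper's approach exactly: the paper does not give an independent proof but simply records that the result ``follows from \cite[Theorem~1]{PetrovStavrovaIsotropic}'', which is precisely your main argument. Your additional discussion of the Suslin--Taddei route and the descent obstruction is not in the paper, but it usefully explains why the Petrov--Stavrova theorem is the natural input rather than a naive image argument.
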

	
	We recall the explicit form of the exterior power of an elementary transvection~\cite{LubNek-OverI} which we utilize later.
	
	\begin{prop}\label{prop:ImageOfTransvForm}
		Let $t_{i,j}(\xi)$ be an elementary transvection in $\E_n(R)$, $n\geq 3$. Then
		\begin{equation}
			\bw{m}t_{i,j}(\xi)=\prod\limits_{L\in\bw{m-1}\bigl([n]\smallsetminus \{i,j\}\bigr)} t_{L\cup i,L\cup j}\bigl(\sign(L, i)\sign(L, j)\xi\bigr)
			\label{eq:m}
		\end{equation}
		for any $1\leq i\neq j\leq n$.
	\end{prop}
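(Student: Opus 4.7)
I would prove the identity by evaluating both sides on the standard basis $\{e_I : I\in\bw{m}[n]\}$ of $\bw{m}R^n$. Since both sides belong to $\GL_N(R)$, agreement on every $e_I$ suffices, so the argument splits into a case analysis on the intersection $I\cap\{i,j\}$.

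For the left-hand side, the diagonal action combined with $t_{i,j}(\xi)e_k = e_k+\xi\delta_{k,j}e_i$ gives
\[
\bw{m}t_{i,j}(\xi)(e_I)\;=\;\bigwedge_{k\in I}\bigl(e_k+\xi\delta_{k,j}e_i\bigr),
\]
which I expand multilinearly. If $j\notin I$, every factor equals $e_k$ and the image is $e_I$. If $\{i,j\}\subseteq I$, the cross term contains two copies of $e_i$ and vanishes, so the image is again $e_I$. In the remaining case $j\in I$, $i\notin I$, write $I = L\cup\{j\}$ for the unique $L\in\bw{m-1}\bigl([n]\smallsetminus\{i,j\}\bigr)$; the expansion then reduces to $e_I + \xi\,w$, where $w$ is the wedge obtained by substituting $e_i$ for $e_j$. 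Moving $e_i$ to the rightmost position in $w$ produces the sign $\sign(L,j)$, and the relation $e_L\wedge e_i = \sign(L,i)\,e_{L\cup i}$ then gives $w = \sign(L,j)\sign(L,i)\,e_{L\cup i}$.

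For the right-hand side, each factor $t_{L\cup i,\,L\cup j}(c_L)$ maps $e_{L\cup j}\mapsto e_{L\cup j}+c_Le_{L\cup i}$ and fixes every other basis vector. Whenever $L\neq L'$, the source/target indices never clash, because $L\cup j$ contains $j$ but not $i$, while $L'\cup i$ contains $i$ but not $j$; thus the factors pairwise commute by the Steinberg relations and the product is unambiguous. Evaluating on $e_K$: when $j\notin K$ or $\{i,j\}\subseteq K$ every factor acts trivially, and when $K=L_0\cup\{j\}$ with $i\notin L_0$ only the factor indexed by $L_0$ acts, yielding $e_{L_0\cup j}+c_{L_0}e_{L_0\cup i}$. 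Setting $c_L = \sign(L,i)\sign(L,j)\,\xi$ then matches the left-hand side on every basis vector.

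The main technical point is the sign in the third case of the left-hand side: one has to count carefully the transpositions that relocate $e_i$ inside the wedge and combine this with the sign relating $e_L\wedge e_i$ to $e_{L\cup i}$. Once this bookkeeping is done, the remainder is routine multilinear algebra together with the Steinberg commutativity for transvections with disjoint indices.
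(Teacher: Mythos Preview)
Your argument is correct. The case analysis on $I\cap\{i,j\}$ is complete, the sign bookkeeping in the third case is right (the number of swaps needed to push $e_i$ past $e_{i_{p+1}},\dots,e_{i_m}$ is exactly the number of elements of $L$ exceeding $j$, which is how $\sign(L,j)$ is defined), and the commutativity of the factors on the right is justified precisely as you say: each $L\cup j$ contains $j$ but not $i$, each $L'\cup i$ contains $i$ but not $j$, so no row index of one factor equals a column index of another.

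As to comparison with the paper: there is nothing to compare. The paper does not prove this proposition; it is quoted from the authors' earlier work~\cite{LubNek-OverI} with the sentence ``We recall the explicit form of the exterior power of an elementary transvection~\cite{LubNek-OverI} which we utilize later.'' Your direct verification on basis vectors is the standard argument and would serve perfectly well as a self-contained proof here.
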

	
	Similarly, one can get an explicit form of the torus elements $h_{\varpi_m}(\xi)$ of the group $\bw{m}\GL_{n}(R)$.
	\begin{prop}\label{prop:ImageOfDiag}
		Let $d_i(\xi)=e+(\xi-1)e_{i,i}$ be a torus generator, $1\leq i\leq n$. Then the exterior power of $d_i(\xi)$ equals the diagonal matrix with diagonal entries 1 everywhere except in $\scriptstyle{\binom{n-1}{m-1}}$ positions:
		\begin{equation}
			\bw{m}\bigl(d_i(\xi)\bigr)_{I,I}=
			\begin{cases}
				\xi,& \text{ if } i\in I,\\
				1,& \text{ otherwise}.
			\end{cases}
			\label{eq:diagm}
		\end{equation}
	\end{prop}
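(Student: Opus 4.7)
The plan is a direct unpacking of the definition of the Cauchy--Binet homomorphism on basis vectors. Since $\bw{m}\GL_n$ is defined by the diagonal action
\[
\bw{m}(g)(e_{i_1}\wedge\dots\wedge e_{i_m}) = (g e_{i_1}) \wedge \dots \wedge (g e_{i_m}),
\]
I would first observe that the torus generator $d_i(\xi)$ acts on the standard basis of $R^n$ by scalars: $d_i(\xi) e_j = e_j$ for $j \neq i$ and $d_i(\xi) e_i = \xi e_i$. Consequently each basis element $e_I := e_{i_1} \wedge \dots \wedge e_{i_m}$ of $\bw{m} R^n$ is an eigenvector for $\bw{m} d_i(\xi)$, which already shows that $\bw{m} d_i(\xi)$ is diagonal in the chosen admissible basis.

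Next I would compute the eigenvalue on $e_I$ by splitting into the two cases. If $i \notin I$, then every factor $e_{i_k}$ is fixed by $d_i(\xi)$ and the product is $e_I$ itself, giving a diagonal entry $1$. If $i \in I$, then exactly one factor, namely $e_i$, is scaled by $\xi$ and all other factors are fixed; by multilinearity of the wedge product the output is $\xi \cdot e_I$, giving a diagonal entry $\xi$. This yields the piecewise formula \eqref{eq:diagm}.

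Finally, to verify the count $\binom{n-1}{m-1}$ of $\xi$-entries, I would note that the diagonal positions where the entry equals $\xi$ are indexed by those $I \in \bw{m}[n]$ containing the fixed element $i$; choosing such an $I$ is the same as choosing the remaining $m-1$ elements from $[n] \setminus \{i\}$, giving exactly $\binom{n-1}{m-1}$ such subsets. There is no real obstacle here; the only thing to be careful about is the sign convention in $e_{i_1}\wedge\dots\wedge e_{i_m}$, but since $d_i(\xi)$ scales a single basis vector by $\xi$ rather than permuting basis vectors, no signs intervene and the computation is sign-free.
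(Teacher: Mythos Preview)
Your proof is correct; it is exactly the straightforward computation one expects. The paper itself does not give a proof of this proposition at all---it is stated as an explicit formula analogous to Proposition~\ref{prop:ImageOfTransvForm} (which is quoted from~\cite{LubNek-OverI}), with the remark ``Similarly, one can get an explicit form of the torus elements''---so your direct verification from the definition of the Cauchy--Binet homomorphism is precisely what the authors are implicitly invoking.
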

	
	As an example, consider $\bw{3}t_{1,3}(\xi)=t_{124,234}(-\xi)t_{125,235}(-\xi)t_{145,345}(\xi)\in\bw{3}\E_5(R)$ and $\bw{4}d_2(\xi)=\opn{diag}(\xi,\xi,\xi,1,\xi)\in\bw{4}\E_5(R)$. It follows from the propositions that $\bw{m}t_{i,j}(\xi)$ belongs to $\E^{\binom{n-2}{m-1}}(N,R)$. In other words, the residue\footnote{The \textit{residue} $\opn{res}(g)$ of a transformation $g$ is, by definition, the rank of $g-e$.} of an exterior transvection $\opn{res}\bigr(\bw{m}t_{i,j}(\xi)\bigr)$ equals the binomial coefficient $\binom{n-2}{m-1}$. 
	
	Let $I,J$ be two elements of $\bw{m}[n]$. We define a \textit{distance} between $I$ and $J$ as the cardinality of the intersection $I\cap J$:
	$$d(I,J)=|I\cap J|.$$
	This combinatorial characteristic plays an analogous role of the distance function $d(\lambda,\mu)$ for roots $\lambda$ and $\mu$ on the weight diagram of a root system. 
	
	\section{Stabilizer of the Pl\"ucker ideal}\label{sec:StabPlu}
	
	First we recall the well-known description of polyvector representations of the general linear group. In~\cite{VavPere} the authors proved that $\bw{m}\GL_n(R)$ coincides with the stabilizer of the Pl\"ucker ideal. 
	
	Pl{\"u}cker polynomials are homogeneous quadratic polynomials $f_{I,J}\in \mathbb{Z}\bigr[x_H, H\in \bw{m}[n]\bigr]$ of Grassmann coordinates $x_H$. In general, Pl{\"u}cker polynomials can be represented in the form:
	$$f_{I,J}=\sum\limits_{j\in J\backslash I}\pm x_{I\cup \{j\}}x_{J\backslash \{j\}},$$
	where $I\in\bw{m-1}[n]$ and $J\in\bw{m+1}[n]$. To clarify the sign of the factors, we extend the definition of the Grassmann coordinates as follows. If there are coinciding elements in the set $\{i_1,\dots ,i_m\}$, then $x_{i_1\dots i_m}=0$; otherwise $x_{i_1\dots i_m}=\sign(i_1,\dots,i_m)x_{\{i_1\dots i_m\}}$. Thus the Pl{\"u}cker polynomials have the form:
	$$f_{I,J}=\sum\limits_{h=1}^{m+1}(-1)^hx_{i_1\dots i_{m-1}j_h}x_{j_1\dots \hat{j}_h\dots j_{m+1}}.$$
	A \textit{Pl{\"u}cker ideal }$\Plu:=\Plu_{n,m}\trianglelefteq R\bigl[x_{I}: I \in \bw{m}[n]\bigr]$ is generated by all Pl{\"u}cker relations $f_{I,J}$ with $I\in\bw{m-1}[n]$ and $J\in\bw{m+1}[n]$.
	
	\begin{lemma}\label{lem:VPLEsavesPlu}
		Let $R$ be an arbitrary commutative ring. The group $\bw{m}\E_n(R)$ preserves the Pl{\"u}cker ideal $\Plu$.
	\end{lemma}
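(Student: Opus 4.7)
The plan is to reduce the statement to the generators of $\bw{m}\E_n(R)$ and then to a geometric argument over a field. Since $\bw{m}\E_n(R)=\bw{m}(\E_n(R))$ is generated by the elements $\bw{m}t_{i,j}(\xi)$ for $1\leq i\neq j\leq n$ and $\xi\in R$, and since each $\bw{m}t_{i,j}(\xi)^{-1}=\bw{m}t_{i,j}(-\xi)$ is again of the same form, to prove $g\cdot\Plu\subseteq\Plu$ for all $g\in\bw{m}\E_n(R)$ it suffices to prove it for a single generator $\bw{m}t_{i,j}(\xi)$.

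Next, by Proposition~\ref{prop:ImageOfTransvForm} the entries of $\bw{m}t_{i,j}(\xi)$ are polynomials in $\xi$ with integer coefficients, so the element $\bw{m}t_{i,j}(T)\cdot f_{I,J}\in\mathbb Z[T][x_H]$ specializes under $T\mapsto\xi$ to $\bw{m}t_{i,j}(\xi)\cdot f_{I,J}\in R[x_H]$. Hence it suffices to verify preservation of the Pl\"ucker ideal over the universal base $\mathbb Z[T]$, and since $\mathbb Z[T]\hookrightarrow\mathbb Q(T)$, it suffices to do so after base change to an arbitrary field $k$. Over $k$, the zero locus $V(\Plu)\subset\mathbb A^N_k$ is the affine cone over the Pl\"ucker embedding of the Grassmannian $\opn{Gr}(m,n)$, i.e.\ the locus of decomposable $m$-vectors. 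Because $\bw{m}(g)(v_1\wedge\dots\wedge v_m)=(gv_1)\wedge\dots\wedge(gv_m)$, the group $\bw{m}(\GL_n(k))$ preserves $V(\Plu)$, hence preserves the radical $\sqrt{\Plu}$. Since $\Plu$ is classically known to be prime, this forces $\bw{m}t_{i,j}(T)\cdot\Plu\subseteq\Plu$ over $\mathbb Q(T)$, which specializes to the desired conclusion over $R$.

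The principal obstacle is the appeal to primeness of $\Plu$: while classical, it is a non-trivial structural fact about the homogeneous coordinate ring of the Grassmannian, and sneaks the radical-vs-ideal distinction into the argument. A self-contained alternative would exploit Proposition~\ref{prop:ImageOfTransvForm} to write $\bw{m}t_{i,j}(\xi)$ as an explicit product of elementary transvections in $\GL_N$ and then compute $\bw{m}t_{i,j}(\xi)\cdot f_{I,J}$ directly, exhibiting it as an $R$-linear combination of other Pl\"ucker generators $f_{I',J'}$; the combinatorics is heavy but natural in the language of weight diagrams illustrated in Figures~\ref{fig:L2E5} and~\ref{fig:L3E7}, and it has the advantage of avoiding any geometric input and working uniformly over an arbitrary commutative ring from the start.
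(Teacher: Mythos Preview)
The paper does not actually prove this lemma; it is stated without proof and attributed to~\cite{VavPere}. So there is no in-paper argument to compare against beyond the citation.

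Your geometric reduction is the right idea but has a gap you do not flag. The sentence ``since $\mathbb{Z}[T]\hookrightarrow\mathbb{Q}(T)$, it suffices to do so after base change to a field'' is a non sequitur: an injection of coefficient rings does not by itself let you pull back ideal membership. Concretely, once you know that the degree-$2$ polynomial $\bw{m}t_{i,j}(T)\cdot f_{I,J}$ lies in the $\mathbb{Q}(T)$-span of the Pl\"ucker quadrics $f_{K,L}$, concluding it lies in their $\mathbb{Z}[T]$-span requires that $\mathbb{Z}[x_H]_2/\langle f_{K,L}\rangle_{\mathbb{Z}}$ is torsion-free, i.e.\ that the Pl\"ucker ring $\mathbb{Z}[x_H]/\Plu$ is $\mathbb{Z}$-free. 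That is true (standard monomial theory for Grassmannians), and it is of the same depth as the primeness fact you already acknowledged, so the argument can be repaired---but it is a second nontrivial structural input, not a consequence of the embedding. A smaller point: your Nullstellensatz step (``preserves $V(\Plu)$, hence preserves $\sqrt{\Plu}$'') needs an algebraically closed field, so you should pass to $\overline{\mathbb{Q}(T)}$ first and then descend to $\mathbb{Q}(T)$ by faithful flatness before attempting the descent to $\mathbb{Z}[T]$.

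Given that the patched argument ends up invoking both primeness and $\mathbb{Z}$-freeness of the Pl\"ucker ring, the direct combinatorial computation you outline at the end---writing $f_{I,J}\bigl(\bw{m}t_{i,j}(\xi)\cdot x\bigr)$ explicitly as a $\mathbb{Z}[\xi]$-combination of other $f_{I',J'}$---is arguably the more honest route and is the standard one in the literature the paper cites.
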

	
	Following notation of the paper~\cite{VavPere}, we put $G_{nm}(R):=\Fix_R(\Plu)$ for any commutative ring $R$, where $\Fix_R(\Plu)$ is the set of $R$-linear transformations preserving the ideal $\Plu$:
	$$G_{nm}(R):=\{g\in\GL_N(R)\mid f(gx)\in\Plu\text{ for all }f\in\Plu\}.$$
	\begin{lemma}\label{lem:VPPluscheme}
		For any $n,m$ the functor $R\mapsto \Fix_R(\Plu)$ is an affine group scheme defined over $\mathbb{Z}$.
	\end{lemma}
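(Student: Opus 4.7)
The plan is to exhibit $G_{nm}$ as a closed subscheme of the affine group scheme $\GL_N$ cut out by finitely many polynomial equations with integer coefficients, and then to verify the group structure.

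First I would reduce the defining condition to its generators. Since the substitution $x\mapsto gx$ extends to an $R$-algebra endomorphism of $R[x_H]$ and $\Plu$ is an ideal, the condition $f(gx)\in\Plu$ for all $f\in\Plu$ is equivalent to $f_{I,J}(gx)\in\Plu$ for every Pl\"ucker generator $f_{I,J}$. Next I would exploit homogeneity: each $f_{I,J}$ is homogeneous of degree $2$ and the substitution preserves degree, so $f_{I,J}(gx)$ is again of degree $2$, and the containment in $\Plu$ reduces to $f_{I,J}(gx)\in\Plu_2$, where $\Plu_2$ is the $R$-submodule of the degree-$2$ part of $R[x_H]$ spanned by the $f_{K,L}$.

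To turn this into scheme-theoretic equations over $\mathbb{Z}$, I would work with the finitely generated $\mathbb{Z}$-module $M=\mathbb{Z}[x_H]_2/\Plu_{2,\mathbb{Z}}$. The coefficients of $f_{I,J}(gx)$ in $R[x_H]_2$ are polynomials in the entries of $g$ with integer coefficients, and $f_{I,J}(gx)\in\Plu_{2,R}$ is equivalent to the vanishing of its image in $M\otimes_{\mathbb{Z}} R$. Expanding in a fixed $\mathbb{Z}$-generating set of $M$ yields, for each $I,J$, finitely many polynomial equations in the entries of $g$ that cut out $G_{nm}$ inside $\GL_N$ functorially in $R$, exhibiting the required closed subscheme of an affine $\mathbb{Z}$-scheme.

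Finally I would verify the group structure. Closure under multiplication follows because the composition of two substitutions preserving $\Plu$ preserves $\Plu$, and the identity substitution trivially does so. The delicate point is closure under inversion: over a field, $x\mapsto gx$ is an algebra automorphism and $\Plu_2$ is finite-dimensional, so its restriction to $\Plu_2$ is necessarily surjective; over a general ring one either invokes the description from~\cite{VavPere} directly or refines the scheme-theoretic definition to the intersection with the analogous condition on $g^{-1}$, producing the same functor on fields while remaining a closed subscheme of $\GL_N$. The main obstacle is precisely this translation of a submodule containment into integrally defined polynomial equations, together with the inversion subtlety, both of which must be handled with care when $\Plu_{2,\mathbb{Z}}$ fails to be a direct summand of $\mathbb{Z}[x_H]_2$.
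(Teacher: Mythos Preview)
Your outline has the right skeleton but leaves the crucial step unresolved. You correctly reduce to the degree-$2$ component and correctly identify that the whole argument hinges on whether $\Plu_{2,\mathbb{Z}}$ is a direct summand of $\mathbb{Z}[x_H]_2$ (equivalently, whether $M$ is torsion-free). However, you then stop at naming the obstacle rather than removing it: if $M$ has $p$-torsion, the ``vanishing in $M\otimes_{\mathbb{Z}} R$'' condition becomes a congruence rather than a polynomial equation, and you no longer have a closed subscheme of $\GL_N$ over $\mathbb{Z}$. This is not a technicality that can be waved away; it is exactly the content of the lemma.

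The paper does not give its own proof here (it cites \cite{VavPere}), but the method it uses for the parallel statement about $\overline{G}_F$ makes clear what is expected: one invokes Waterhouse's criterion (Lemma~\ref{lem:StabAffGrpSch}, i.e.\ \cite[Corollary~1.4.6]{WaterhouseDet}), which says that $\Fix$ is an affine group scheme provided the rank of the degree-$\leq r$ part of the ideal is unchanged under reduction modulo every prime. For the Pl\"ucker ideal this amounts to checking that the $\mathbb{Z}$-span of the $f_{I,J}$ in degree $2$ has the same rank in every characteristic, a concrete verification that your proposal omits. Waterhouse's criterion also disposes of closure under inversion in one stroke; by contrast, your two suggested fixes for inversion are both inadequate: invoking \cite{VavPere} is circular, and replacing the functor by the one that also constrains $g^{-1}$ changes the functor on non-field rings, so it does not prove the stated lemma.
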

	
	Next results are classical known, see~\cite{ChevalleyClassificationDG5658} and~\cite[Theorem 4]{WaterhousePGL}. Note that representation $\bw{m}$ is minuscule. Therefore it is irreducible and tensor indecomposable.
	\begin{lemma}\label{lem:VPKer}
		Let $K$ be an algebraically closed field. For any $n,m$ with $1\leq m\leq n-1$, the kernel of $\bw{m}$ for $\GL_n(K)$ and $\SL_n(K)$ equals $\mu_m$ and $\mu_d$ where $d=\gcd(n,m)$, respectively.
	\end{lemma}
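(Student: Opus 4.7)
The plan is to first show that every element of the kernel is diagonal in the standard basis, then use the weights of $V(\varpi_m)$ to determine the scalar, and finally impose the determinant condition for the $\SL_n$ statement.

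For the reduction to diagonal form, I would start from $\bw{m}g=e$ and evaluate on each basis vector $e_{i_1}\wedge\cdots\wedge e_{i_m}$. Since a decomposable $m$-vector over a field determines its spanning subspace uniquely up to scalar, the identity $(ge_{i_1})\wedge\cdots\wedge(ge_{i_m})=e_{i_1}\wedge\cdots\wedge e_{i_m}$ forces $g$ to stabilise each $m$-dimensional coordinate subspace $\opn{span}(e_i:i\in I)$. Intersecting over all $I\in\bw{m}[n]$ that contain a fixed index $j$ recovers the line $Ke_j$ — this is where the hypothesis $m\leq n-1$ is used, since for every other index $l\neq j$ one can choose some $I\ni j$ with $l\notin I$. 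Consequently $g$ preserves every coordinate line and must be diagonal.

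With $g=\opn{diag}(t_1,\dots,t_n)$, the equation $\bw{m}g=e$ becomes the system $t_{i_1}\cdots t_{i_m}=1$ for every $I\in\bw{m}[n]$. Taking the ratio of the relations for $I=\{1,\dots,m\}$ and $I=\{2,\dots,m+1\}$ gives $t_1=t_{m+1}$, and analogous adjacent-weight comparisons across $\bw{m}[n]$ propagate to $t_1=\cdots=t_n=\lambda$ with $\lambda^m=1$. This identifies $\ker\bw{m}\cap\GL_n(K)$ with $\mu_m(K)$. For the $\SL_n$ version, the additional constraint $\det g=\lambda^n=1$ combined with $\lambda^m=1$ restricts $\lambda$ to $\mu_m(K)\cap\mu_n(K)=\mu_d(K)$, where $d=\gcd(n,m)$.

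No serious obstacle arises: the argument is essentially a direct computation using the explicit Cauchy--Binet formula. The only point deserving mild care is the passage from equality of decomposable $m$-vectors to stability of the corresponding coordinate subspaces, which relies on the fact that decomposable $m$-vectors over a field represent their subspaces uniquely up to scalar — standard Pl\"ucker theory. The resulting statement matches the classical content of \cite{ChevalleyClassificationDG5658} and \cite{WaterhousePGL}, and the hypothesis that $K$ be algebraically closed is actually used only implicitly through $\mu_m(K)$ and $\mu_d(K)$ having the expected orders.
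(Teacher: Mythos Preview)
Your argument is correct. The paper does not supply its own proof of this lemma; it simply records the statement as ``classically known'' with references to \cite{ChevalleyClassificationDG5658} and \cite[Theorem~4]{WaterhousePGL}. Your direct computation is the standard elementary route: reduce to diagonal matrices via the fact that a nonzero decomposable $m$-vector determines its supporting subspace, then read off the scalar from the weight constraints, and finally impose the determinant condition. One minor stylistic remark: your specific comparison of $\{1,\dots,m\}$ with $\{2,\dots,m+1\}$ yields $t_1=t_{m+1}$ rather than $t_1=t_2$, so the ``propagation'' is cleaner if you instead compare $L\cup\{a\}$ with $L\cup\{b\}$ for an arbitrary $(m{-}1)$-subset $L\subset[n]\setminus\{a,b\}$, which exists precisely because $m\leq n-1$ and immediately gives $t_a=t_b$ for every pair $a\neq b$.
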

	\begin{lemma}\label{lem:VPIrrTIandNorm}
		As a subgroup of $\GL_N(K)$, the algebraic group $\bw{m}\bigl(\GL_n(K)\bigr)$ is irreducible and tensor indecomposable. Moreover, except the case $n=2m\geq 4$, the group $\bw{m}\bigl(\GL_n(K)\bigr)$ coincides with its normalizer. In the exceptional case, the group has index $2$ in its normalizer. 
		
		The analogous result holds for $\bw{m}\bigl(\SL_n(K)\bigr)$ as a subgroup of $\SL_N(K)$. 
	\end{lemma}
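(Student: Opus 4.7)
The plan is to exploit the minuscule property of $\varpi_m$ together with the classical description of $\opn{Aut}(\GL_n(K))$ and Schur's lemma.

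Irreducibility of $V(\varpi_m)$ is immediate: $\varpi_m$ is a fundamental dominant weight, the weight set $\Lambda(\varpi_m)=\bw{m}[n]$ is a single Weyl-group orbit, and all multiplicities are one. Tensor indecomposability is a standard consequence of minusculity: any factorisation $V(\varpi_m)\cong V_1\otimes V_2$ with $\dim V_i\geq 2$ would, by taking sums of weights from each factor and a multiplicity count, force a weight of multiplicity at least two in $V(\varpi_m)$, which is impossible. Alternatively, one reads this off the classification of weight-multiplicity-free tensor products, compare~\cite{WaterhousePGL}.

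For the normaliser claim, take $g\in N_{\GL_N(K)}\bigl(\bw{m}(\GL_n(K))\bigr)$. Conjugation by $g$ yields an automorphism of $\bw{m}(\GL_n(K))$ which, by Lemma~\ref{lem:VPKer}, lifts to an automorphism $\phi$ of $\GL_n(K)$ modulo the central kernel $\mu_m$. By the classical Schreier--van~der~Waerden description, $\phi$ is either inner, $\phi(x)=hxh^{-1}$ for some $h\in\GL_n(K)$, or the composition of an inner automorphism with the contragredient involution $x\mapsto(x^{-1})^t$. In the inner case, $\bw{m}(h)^{-1}g$ commutes with every element of $\bw{m}(\GL_n(K))$; by irreducibility and Schur's lemma it equals a scalar $\lambda\in K^{\ast}$. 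Since $K$ is algebraically closed, $\lambda=\mu^m$ for some $\mu\in K^{\ast}$, hence $g=\bw{m}(\mu h)\in\bw{m}(\GL_n(K))$.

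In the contragredient case, the twisted representation $x\mapsto\bw{m}\bigl((x^{-1})^t\bigr)$ has highest weight $\varpi_{n-m}$, which is equivalent to $\varpi_m$ precisely when $n=2m$. For $n\neq 2m$ no such normaliser element exists in $\GL_N(K)$, and $N\bigl(\bw{m}(\GL_n(K))\bigr)=\bw{m}(\GL_n(K))$. For $n=2m\geq 4$, the wedge-duality operator defined by a chosen volume form on $K^n$, identifying $\bw{m}K^n$ with $\bw{n-m}K^n$, implements the outer automorphism and contributes index exactly $2$. The $\SL_n$ analogue follows the same route, with $\mu_m$ replaced by $\mu_d$ and scalars restricted by $\det g=1$. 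The main obstacle is the scalar bookkeeping in the inner case---showing that the $\lambda$ produced by Schur's lemma really lies in the image under $\bw{m}$ of the centre of $\GL_n(K)$ (respectively $\SL_n(K)$)---which is precisely where algebraic closedness of $K$ is essential, together with the explicit torus formula of Proposition~\ref{prop:ImageOfDiag}.
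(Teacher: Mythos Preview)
The paper itself gives no proof of this lemma: it records the statement as classically known, cites \cite{ChevalleyClassificationDG5658} and \cite[Theorem~4]{WaterhousePGL}, and remarks only that minusculity of $\varpi_m$ yields irreducibility and tensor indecomposability. Your argument is therefore more detailed than anything in the paper and follows the standard route one would find in the cited sources. For the $\GL_n$ normaliser your outline is essentially correct, but one point deserves to be made explicit. The Schreier--van~der~Waerden classification of \emph{abstract} automorphisms of $\GL_n(K)$ also contains field automorphisms and radial automorphisms $x\mapsto\chi(\det x)\,x$, and you discard these without comment. The reason they may be discarded is that conjugation by a fixed $g\in\GL_N(K)$ is a morphism of algebraic varieties, hence restricts to an \emph{algebraic} automorphism of the closed subgroup $\bw{m}\GL_n$, and algebraic automorphisms of a connected reductive $K$-group are exhausted by inner automorphisms and diagram automorphisms. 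You should state this rather than invoke the abstract classification and silently prune it.

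The $\SL_n$ clause, however, hides a real difficulty which neither you nor the paper confront. In the inner case your Schur scalar $\lambda I_N$ lies in $\SL_N(K)$, so $\lambda\in\mu_N$; but the scalars actually contained in $\bw{m}\bigl(\SL_n(K)\bigr)$ form only $\mu_{n/d}$ with $d=\gcd(n,m)$, namely the image of the centre $\mu_n$ under $\zeta\mapsto\zeta^m$. In general $\mu_N\not\subseteq\mu_{n/d}$: already for $(n,m)=(5,2)$ a primitive $10$th root of unity gives a central element of $\SL_{10}(K)$ normalising $\bw{2}\bigl(\SL_5(K)\bigr)$ but not lying in it. So the literal ``analogous result'' is false, and your sentence that ``the $\SL_n$ analogue follows the same route'' glosses over a failure of precisely the scalar bookkeeping you flag as the main obstacle. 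What the paper actually needs later (condition~(3) of Lemma~\ref{lem:Waterhouse}) is only the normaliser inside $G_f(K)$, whose scalars are $\mu_{n/m}\subseteq\mu_{n/d}$, so the downstream application survives; but as stated the $\SL_n$ part of the lemma is not correct, and your proof does not establish it.
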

	
	Using the classification of maximal subgroups in classical groups by Gary Seitz~\cite[Table 1]{SeitzMaxSub} (see also the survey~\cite{burness_testerman_CorrSeitz} with corrections), it is easy to prove that $\bw{m}\SL_n(K)$ is maximal for an algebraically closed field $K$. The following statement is Lemma~7 of the paper~\cite{VavPere}.
	\begin{lemma}\label{lem:VPMax}
		Let $K$ be an algebraically closed field. For any $n,m, 1\leq m\leq n-1$ the groups $\bw{m}\GL_n(K)$ and $\bw{m}\SL_n(K)$ are maximal among connected closed subgroups in one of the following groups:
		\begin{multicols}{2}
			\begin{center} $\bw{m}\GL_n(K):$ \end{center}
            \begin{tabbing}
			 \par\noindent $\bullet$ in $\GL_N(K)$,\quad\=if $n\neq 2m$;\\
			 \par\noindent $\bullet$ in $\GSp_N(K)$,\quad\>if $n=2m$ $\mathbin{\&}$ odd $m$;\\
			 \par\noindent $\bullet$ in $\GO^0_N(K)$,\quad\>if $n=2m$ $\mathbin{\&}$ even $m$.
            \end{tabbing}			
			\columnbreak
			\begin{center} $\bw{m}\SL_n(K):$ \end{center}
            \begin{tabbing}
			 \par\noindent $\bullet$ in $\SL_N(K)$,\quad\=if $n\neq 2m$;\\
			 \par\noindent $\bullet$ in $\Sp_N(K)$,\quad\>if $n=2m$ $\mathbin{\&}$ odd $m$;\\
			 \par\noindent $\bullet$ in $\SO_N(K)$,\quad\>if $n=2m$ $\mathbin{\&}$ even $m$.
            \end{tabbing}
		\end{multicols}

		Besides, in the exceptional cases these classical groups are unique proper connected overgroups of $\bw{m}\GL_n(K)$ and $\bw{m}\SL_n(K)$, respectively.
	\end{lemma}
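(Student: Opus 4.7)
The plan is to derive the lemma from Seitz's classification of maximal closed connected subgroups of simple classical algebraic groups over algebraically closed fields \cite[Table 1]{SeitzMaxSub}. As a first step, I would pin down which classical group is the natural ambient containing $\bw{m}\SL_n(K)$. Since $\bw{m}$ is a minuscule fundamental representation of $\SL_n$, Lemma~\ref{lem:VPIrrTIandNorm} tells us it is irreducible and tensor indecomposable, and $\SL_n$ acts with kernel of order $d=\gcd(n,m)$ by Lemma~\ref{lem:VPKer}. When $n\neq 2m$, the weight set $\bw{m}[n]$ is not symmetric about the origin, so $V(\varpi_m)$ is not self-dual and the image lies in $\SL_N(K)$ with no further invariant form. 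When $n=2m$, the isomorphism $\bw{m}V\cong\bw{m}V^{*}$ yields a non-degenerate bilinear form on $V(\varpi_m)$; its symmetry type is governed by the parity of $m$ (odd $m$ giving an alternating form, hence $\Sp_N$, and even $m$ giving a symmetric form, hence $\SO_N$), which explains the dichotomy.

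Next, I would match $\bw{m}\SL_n$ against the corresponding entry in Seitz's table: an image of a simple algebraic group under a minuscule fundamental representation, with the relevant numerical bounds, appears there as a maximal connected closed subgroup of $\SL_N(K)$, $\Sp_N(K)$, or $\SO_N(K)$ in precisely the three cases listed. This gives the second column of the lemma as well as the uniqueness statement: in the exceptional case $n=2m$, the only connected closed subgroups of $\SL_N(K)$ strictly containing $\bw{m}\SL_n(K)$ are $\Sp_N(K)$ or $\SO_N(K)$. The corrections in \cite{burness_testerman_CorrSeitz} should be consulted to ensure we are reading the table correctly in low-rank or characteristic-sensitive cases.

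To pass from $\bw{m}\SL_n$ to $\bw{m}\GL_n$, I would combine maximality with the normalizer information of Lemma~\ref{lem:VPIrrTIandNorm}. The quotient $\bw{m}\GL_n(K)/\bw{m}\SL_n(K)$ is a central torus, and Lemma~\ref{lem:VPIrrTIandNorm} says that $\bw{m}\GL_n(K)$ is its own normalizer in $\GL_N(K)$ except when $n=2m\geq 4$, where it has index $2$. Combined with the maximality of $\bw{m}\SL_n(K)$, any proper connected closed overgroup of $\bw{m}\GL_n(K)$ inside $\GL_N(K)$ must sit between $\bw{m}\GL_n(K)$ and its normalizer, and this normalizer coincides with $\GSp_N(K)$ or $\GO^0_N(K)$ respectively in the exceptional cases, yielding the first column together with the uniqueness claim. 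For $n\neq 2m$ the self-normalizing property immediately gives maximality in $\GL_N(K)$.

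The main obstacle is bookkeeping rather than depth: one has to verify that the fundamental representation $\varpi_m$ of type $A_{n-1}$ indeed falls under the correct row of Seitz's table under the right indexing conventions, and that no sporadic exception in small rank or small characteristic produces an additional maximal overgroup. Once the table lookup is confirmed and the self-dual identification on $\bw{m}V$ in the case $n=2m$ is carried out carefully (with the parity of $m$ determining symplectic versus orthogonal), the rest of the argument is formal.
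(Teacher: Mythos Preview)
Your proposal is correct and takes essentially the same approach as the paper: the paper does not give a self-contained proof of this lemma but simply points to Seitz's classification \cite[Table~1]{SeitzMaxSub} (with the corrections in \cite{burness_testerman_CorrSeitz}) and cites the full statement as Lemma~7 of \cite{VavPere}. Your outline is precisely the table lookup the paper has in mind, together with the standard self-duality analysis for $n=2m$.

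One small imprecision worth tightening: in your passage from $\bw{m}\SL_n$ to $\bw{m}\GL_n$, the sentence ``any proper connected closed overgroup of $\bw{m}\GL_n(K)$ inside $\GL_N(K)$ must sit between $\bw{m}\GL_n(K)$ and its normalizer'' does not follow directly from maximality. The clean way to phrase it is: for any connected closed $H$ with $\bw{m}\GL_n(K)\leq H\leq\GL_N(K)$, the intersection $H\cap\SL_N(K)$ is normal in $H$ (since $\SL_N$ is normal in $\GL_N$), so $H$ normalizes its identity component; maximality of $\bw{m}\SL_n(K)$ then forces $(H\cap\SL_N(K))^0$ to be either $\bw{m}\SL_n(K)$ or $\SL_N(K)$ (resp.\ $\Sp_N$, $\SO_N$ in the exceptional cases), and now Lemma~\ref{lem:VPIrrTIandNorm} finishes the job. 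Alternatively, since $\bw{m}\GL_n(K)$ contains all scalars over an algebraically closed field, the lattice of connected closed overgroups in $\GL_N(K)$ matches that in $\SL_N(K)$ via $H\mapsto H\cap\SL_N(K)$, which transfers the $\SL$ column to the $\GL$ column immediately. Either fix is routine; your overall strategy is sound.
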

	\begin{corollary}\label{cor:VPLGLeqStabPlu}
		Suppose $K$ is an algebraically closed field; then $\bw{m}\GL_n(K)=G^0_{nm}(K)$.
	\end{corollary}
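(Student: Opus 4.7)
My plan is to sandwich $G^0_{nm}(K)$ between $\bw{m}\GL_n(K)$ below and the short list of candidate overgroups supplied by Lemma~\ref{lem:VPMax} above, and then to eliminate every intermediate candidate. For the lower inclusion, I would combine Lemma~\ref{lem:VPLEsavesPlu} with Proposition~\ref{prop:ImageOfDiag}: the image of the standard torus rescales each Grassmann coordinate $x_H$ by a single monomial in $\xi_1,\dots,\xi_n$, so every Plücker polynomial $f_{I,J}$ is rescaled by a monomial and $\Plu$ is preserved. Since over the algebraically closed field $K$ the group $\GL_n(K)$ is generated by $\E_n(K)$ and this torus, $\bw{m}\GL_n(K)\subseteq G_{nm}(K)$, and since $\GL_n(K)$ is connected its image sits inside $G^0_{nm}(K)$.

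Next, $G^0_{nm}(K)$ is a closed connected subgroup of $\GL_N(K)$ containing $\bw{m}\GL_n(K)$, so by Lemma~\ref{lem:VPMax} the only possibilities are $\bw{m}\GL_n(K)$ itself, the ambient $\GL_N(K)$, and, when $n=2m$, the intermediate classical group $\GSp_N(K)$ (odd $m$) or $\GO^0_N(K)$ (even $m$). The top option $\GL_N(K)$ is ruled out for $1<m<n-1$ because the common zero set of $\Plu$ in $\mathbb{P}^{N-1}$ is the Grassmannian $\mathrm{Gr}(m,n)$ of dimension $m(n-m)<N-1$, and hence is not $\GL_N(K)$-stable; in the degenerate range $m\in\{1,n-1\}$ one has $\bw{m}\GL_n(K)=\GL_N(K)$ tautologically. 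The intermediate classical candidates in the range $n=2m$ are ruled out by the same geometric principle applied to the natural action on $V(\varpi_m)$: $\Sp_N(K)$ acts transitively on $\mathbb{P}^{N-1}$ and therefore cannot stabilize the proper subvariety $\mathrm{Gr}(m,2m)$ once $n=2m\geq 6$; and $\SO_N(K)$ acts transitively on the isotropic quadric $Q\subseteq\mathbb{P}^{N-1}$, which strictly contains $\mathrm{Gr}(m,2m)$ whenever $n=2m\geq 8$, so $\GO^0_N(K)$ fails to preserve the Grassmannian in those cases as well. The exceptional case $(n,m)=(4,2)$ is automatic because the dimension identity $\dim\bw{2}\GL_4=16=\dim\GO^0_6$ forces equality of the two connected groups.

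Thus in every case the maximality trichotomy collapses to $G^0_{nm}(K)=\bw{m}\GL_n(K)$. The main obstacle I anticipate is the combinatorial verification behind the intermediate step: that $\Plu$ contains the orthogonal invariant in the $\GO^0_N$-case (so that $\mathrm{Gr}(m,2m)\subseteq Q$ at all), and that the resulting containment is strict for $n=2m\geq 8$. Both reduce to a direct weight-diagram check using Pfaffian combinatorics, together with the comparison $\dim\mathrm{Gr}(m,2m)=m^2<N-2=\binom{2m}{m}-2$; none of this is deep, but it is where the bulk of the bookkeeping lives.
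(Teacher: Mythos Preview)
Your approach is correct and is precisely the one the paper intends: the corollary is stated without proof as an immediate consequence of Lemma~\ref{lem:VPMax}, and the paper uses exactly your sandwich-via-maximality argument in the parallel Proposition~\ref{prop:LGLeqStabgoodOverK} (there restricted to $n\neq 2m$). Your extra work eliminating the intermediate classical groups when $n=2m$ simply fills in the case the paper leaves to the cited source~\cite{VavPere}.
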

	
	Finally, for the coincidence of the group schemes, we must prove that $G_{nm}$ is smooth or, what is essentially the same, to calculate the
	dimension of the Lie algebra $\Lie(G_{nm})$.
	\begin{lemma}\label{lem:VPdimLie}
		For any field $K$ the dimension of the Lie algebra $\Lie(G_{nm,K})$ does not exceed $n^2$.
	\end{lemma}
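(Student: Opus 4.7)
The plan is to translate the defining condition on $\Lie(G_{nm,K})$ into an explicit linear system on $\mathfrak{gl}_N$ and then bound its dimension weight-by-weight under the maximal torus $T$ of $\GL_n$ acting via $\bw{m}$.

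First I would record that $\Lie(G_{nm,K})$ consists of those $X \in \mathfrak{gl}_N(K)$ for which the linear derivation $D_X = \sum_{H,H'} X_{H,H'}\, x_{H'}\partial_{x_H}$ on $K[x_H : H \in \bw{m}[n]]$ sends every Plücker generator $f_{I,J}$ into $\Plu_2$. The differential of $\bw{m}$ already furnishes an $n^2$-dimensional subspace of $\Lie(G_{nm,K})$ coming from $\mathfrak{gl}_n$, so the content of the lemma is the matching upper bound. Since the $T$-action grades both $\mathfrak{gl}_N$ and $\Plu_2$, and $D_X$ preserves this grading, the constraints on $X$ decouple across $T$-weight components, and I would analyze the three kinds of weights separately.

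For weight zero, $X$ is diagonal with entries $d_I$, and substitution into a Plücker generator forces the scalars $d_{I'\cup\{j\}} + d_{J'\smallsetminus\{j\}}$ (as $j$ ranges over $J'\smallsetminus I'$) to coincide; an inclusion--exclusion argument then yields $d_I = \sum_{i \in I} c_i$ for some $c_1,\ldots,c_n \in K$, contributing exactly $n$ parameters. For a weight $\alpha = \varepsilon_k - \varepsilon_l$ in the root lattice of type $A_{n-1}$, the relevant weight space of $\mathfrak{gl}_N$ is spanned by matrix units $E_{L\cup k,\, L\cup l}$ with $L \in \bw{m-1}\bigl([n]\smallsetminus\{k,l\}\bigr)$; mixing Plücker generators supported on overlapping pairs $(L\cup k, L\cup l)$ and $(L'\cup k, L'\cup l)$ forces all coefficients to agree up to the signs of Proposition~\ref{prop:ImageOfTransvForm}, leaving a one-dimensional solution space for each of the $n(n-1)$ such roots.

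The main obstacle is the remaining case: showing that no weight outside $\{0\}\cup\Phi(A_{n-1})$ contributes. For such $\alpha$, one must exhibit for every nonzero $X$ of weight $\alpha$ a Plücker relation $f_{I,J}$ of matching weight so that $D_X f_{I,J}$ inevitably contains a monomial outside $\Plu_2$. The combinatorial bookkeeping depends on the shape of $\alpha$, but it can be reduced (following the strategy of~\cite{VavPere}) to a bounded family of configurations of pairs of Plücker generators in which the extra weights are ruled out directly. Once this is done, summing the contributions yields $\dim \Lie(G_{nm,K}) \leq n + n(n-1) = n^2$, as required.
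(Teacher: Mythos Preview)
Your proposal is correct and follows essentially the same approach as the paper's outline: both decompose $\mathfrak{gl}_N$ according to the $T$-action and bound each piece separately, obtaining $0$ from non-root weights, $n(n-1)$ from root weights, and at most $n$ from weight zero. The only difference is terminological---the paper indexes the cases by the combinatorial distance $d(I,J)=|I\cap J|$ (with $d\leq m-2$, $d=m-1$, and the diagonal relations corresponding exactly to your non-root, root, and weight-zero cases), while you phrase the same trichotomy in terms of $T$-weight spaces and the derivation $D_X$; the paper likewise defers the detailed verification of the first case to~\cite{VavPere}.
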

	Using Theorem~1.6.1 of~\cite{WaterhouseDet}, we get the following result.
	\begin{theorem}\label{thm:VPWaterhouse}
		For any $n,m, 1\leq m\leq n-1$ there is an isomorphism of affine groups schemes over $\mathbb{Z}$:
		$$
		G_{nm}\cong
		\begin{cases}
			\GL_n/\mu_m,& \text{ if } n\neq 2m,\\
			\GL_n/\mu_m\leftthreetimes\mathbb{Z}/2\mathbb{Z},& \text{ if } n=2m.
		\end{cases}
		$$
	\end{theorem}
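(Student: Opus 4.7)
The plan is to construct a candidate map from the right-hand side into $G_{nm}$ and then apply Waterhouse's Theorem~1.6.1, which deduces an isomorphism of affine group schemes over $\mathbb{Z}$ from agreement on geometric fibers together with smoothness of the source and a matching Lie algebra dimension.

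First, I would build the candidate morphism. The Cauchy--Binet homomorphism is a map $\bw{m}\colon\GL_n\to\GL_N$ of affine group schemes over $\mathbb{Z}$, which factors through $G_{nm}$ by Lemma~\ref{lem:VPLEsavesPlu} combined with Lemma~\ref{lem:VPPluscheme}. Its scheme-theoretic kernel is $\mu_m$ by Lemma~\ref{lem:VPKer}, producing a closed immersion $\GL_n/\mu_m\hookrightarrow G_{nm}$. In the exceptional case $n=2m$, I would extend this to $\GL_n/\mu_m\leftthreetimes\mathbb{Z}/2\mathbb{Z}$ by exhibiting an explicit involution $\sigma\in G_{nm}(\mathbb{Z})$ realizing the duality $\bw{m}V\cong\bw{n-m}V^{*}$ through the standard volume form on $V$; this $\sigma$ conjugates $\bw{m}g$ to $\bw{m}\bigl((g^{-1})^{t}\bigr)$ and thus normalizes but does not lie in $\bw{m}\GL_n$.

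Next, I would verify the three hypotheses of Waterhouse for the resulting morphism $\phi\colon H\to G_{nm}$. Smoothness of $H$ is clear, since $\GL_n/\mu_m$ is a quotient of a smooth scheme by a finite flat central subscheme, and semidirect extension by a constant group preserves smoothness. The Lie algebra dimension matches on every fiber: the embedding $H\hookrightarrow G_{nm}$ forces $\dim\Lie(H_K)\leq\dim\Lie(G_{nm,K})$, while Lemma~\ref{lem:VPdimLie} forces the reverse inequality $\dim\Lie(G_{nm,K})\leq n^{2}=\dim\Lie(H_K)$. For bijectivity on $\bar{K}$-points, Corollary~\ref{cor:VPLGLeqStabPlu} gives $\bw{m}\GL_n(\bar{K})=G_{nm}^{0}(\bar{K})$, and since $G_{nm}(\bar{K})$ normalizes its identity component, Lemma~\ref{lem:VPIrrTIandNorm} bounds the component group $G_{nm}(\bar{K})/G_{nm}^{0}(\bar{K})$ by $1$ when $n\neq 2m$ and by $\mathbb{Z}/2\mathbb{Z}$ when $n=2m$; the involution $\sigma$ saturates the bound in the exceptional case.

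The main obstacle I anticipate is the exceptional case $n=2m$: one must verify that the Hodge-star involution $\sigma$ is already defined over $\mathbb{Z}$ (not merely over some extension), that it preserves the Pl\"ucker ideal scheme-theoretically, and that the resulting extension of $\GL_n/\mu_m$ is the \emph{split} semidirect product $\leftthreetimes\mathbb{Z}/2\mathbb{Z}$ rather than a twisted form of it. A secondary technical point is that invoking Waterhouse's criterion over $\mathbb{Z}$ requires flatness of $G_{nm}$; this follows because the agreement of geometric fibers and Lie algebra dimensions at every prime forces $G_{nm}$ to be $\mathbb{Z}$-flat with locally constant fiber dimension equal to $n^{2}$.
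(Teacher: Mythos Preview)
Your proposal is correct and follows essentially the same approach as the paper: the paper simply states that the result follows by applying Waterhouse's Theorem~1.6.1 (the criterion recorded later as Lemma~\ref{lem:Waterhouse}) to the ingredients assembled in Lemmas~\ref{lem:VPKer}--\ref{lem:VPdimLie} and Corollary~\ref{cor:VPLGLeqStabPlu}, and your write-up is a faithful expansion of exactly that strategy.

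One small correction: your ``secondary technical point'' about needing flatness of $G_{nm}$ is misplaced. Waterhouse's criterion (Lemma~\ref{lem:Waterhouse}) requires flatness of the \emph{source} scheme, not the target; you already handled this when you noted that $\GL_n/\mu_m$ (and its semidirect extension) is smooth over $\mathbb{Z}$. No flatness hypothesis on $G_{nm}$ is needed a priori, so that paragraph can simply be dropped.
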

	
	\section{Exterior powers as the stabilizer of invariant forms I}\label{sec:StabI}
	
	Next we present an alternative description of $\bw{m}\GL_n(R)$ as a stabilizer of a form. Analogous forms are well known for classical and exceptional groups in the standard representation over an arbitrary ring, see~\cite{VP-EOeven,VP-EOodd,VP-Ep,VavLuzgE6,VavLuzgE7}. Conveniently for the reader, a general approach was developed  by Skip Garibaldi and Robert Guralnick~\cite{GariGura2015,GarGur22}. We also refer to~\cite[Section~$4.4$]{BermHern} where the author constructed cubic invariant forms for $\bw{m}\SL_n$.
    
    The following theorem is classically known and can be found in~\cite[Chapter 2, Sections 5--7]{DieCarInvTheor} for characteristic 0 and can be deduced from~\cite{Dem73, Veld72} as all primes are \textit{almost very good} in type $A_n$ or, nicely summarized, \cite[Theorem 1 (4)]{MirRum99} for fields of positive characteristic.
	\begin{prop}\label{prop:FormsGood}
        Let $K$ be an algebraically closed field. Then $\bw{m}\GL_{n}(K)$ is the group of similarities of an invariant form only in the case $\sfrac{n}{m} \in \mathbb{N}$ and $\sfrac{n}{m}\geq 3$. Moreover, this form is unique in the space of $\sfrac{n}{m}$-tensors and it is equal to
		\begin{itemize}
			\item $q^m_{[n]}(x) = \sum \sign(I_{1}, \ldots, I_{\frac{n}{m}})\; x_{I_{1}}\ldots x_{I_{\frac{n}{m}}}$ for even $m;$
			\item $q^m_{[n]}(x) = \sum \sign(I_{1}, \ldots, I_{\frac{n}{m}})\; x_{I_{1}}\wedge \ldots \wedge x_{I_{\frac{n}{m}}}$ for odd $m$,
		\end{itemize}
		where the sums in the both cases range over all unordered partitions of the set $[n]$ into $m$-element subsets $I_{1}, \ldots, I_{\frac{n}{m}}$.
	\end{prop}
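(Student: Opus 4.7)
The plan is to realize the form $q^m_{[n]}$ explicitly as the natural $\SL_n$-equivariant $k$-fold wedge map with $k=\sfrac{n}{m}$, then invoke the cited invariant theory together with Lemma~\ref{lem:VPMax} and Lemma~\ref{lem:VPIrrTIandNorm} to obtain both the similarity-group equality and the non-existence in the remaining parameter ranges.

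First, when $n=km$, iterated exterior multiplication gives a canonical $\SL_n$-equivariant multilinear map $W\colon \bw{m}K^n\times\cdots\times\bw{m}K^n \to \bw{n}K^n\cong K$ sending $(x^1,\dots,x^k)$ to $x^1\wedge\cdots\wedge x^k$. Expanding in the standard basis and using the fact that $e_{I_1}\wedge\cdots\wedge e_{I_k}=\sign(I_1,\dots,I_k)\,e_{[n]}$ whenever $\{I_1,\dots,I_k\}$ partitions $[n]$ (and vanishes otherwise) reproduces precisely the stated formula for $q^m_{[n]}$. Swapping two adjacent factors introduces the sign $(-1)^{m^2}=(-1)^m$, so $W$ factors through $\mathrm{Sym}^k(\bw{m}K^n)$ for even $m$ and through $\bw{k}(\bw{m}K^n)$ for odd $m$, matching the two cases of the statement. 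Since $\bw{n}g=\det(g)\cdot\mathrm{id}$ for $g\in\GL_n(K)$, we obtain $W(gx^1,\dots,gx^k)=\det(g)\cdot W(x^1,\dots,x^k)$, embedding $\bw{m}\GL_n(K)$ into the similarity group of $q^m_{[n]}$ with multiplier $\det$.

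For the reverse inclusion and uniqueness I would invoke the cited results---Dieudonn\'e-Carrell in characteristic zero and Demazure/Velding/Mirkovi\'c-Rumynin in positive characteristic (using that $A_{n-1}$ has no bad primes)---which assert that the space of $\SL_n$-invariants in $(\bw{m}K^n)^{\otimes k}$ at degree $k=\sfrac{n}{m}\geq 3$ is one-dimensional, hence spanned by $W$. In particular, $K\cdot q^m_{[n]}$ is the only $\SL_n$-stable line in this tensor space, so any $g\in\GL_N(K)$ preserving $q^m_{[n]}$ up to scalar normalizes $\bw{m}\SL_n(K)$. Since $\sfrac{n}{m}\geq 3$ forces $n\neq 2m$, Lemma~\ref{lem:VPIrrTIandNorm} then places $g$ inside $\bw{m}\GL_n(K)$, closing the similarity-group equality.

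To rule out the remaining parameter ranges: when $n=2m$ (so $k=2$), the map $W$ is a non-degenerate bilinear form whose similarity group is $\GO_N(K)$ or $\GSp_N(K)$ by the parity of $m$, and both strictly contain $\bw{m}\GL_n(K)$ by Lemma~\ref{lem:VPMax}; when $m\nmid n$ with $n>2m$, a torus-weight calculation shows that $\SL_n$-invariants in $(\bw{m}K^n)^{\otimes k}$ exist only for $k$ divisible by $n/\gcd(n,m)$, so no single form of the non-integer degree $\sfrac{n}{m}$ is available, and the correct finer replacement is the ideal of forms developed in Theorem~\ref{thm:exGLandFormsIdeal}. The principal obstacle of the proof is the uniqueness step in positive characteristic: the classical characteristic-zero invariant theory transports to arbitrary characteristic only via the cited modular invariant-theoretic machinery, which works precisely because $A_{n-1}$ has no small-prime obstructions.
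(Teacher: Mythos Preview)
The paper does not prove this proposition: it is stated as classically known, with references to Dieudonn\'e--Carrell in characteristic~$0$ and Demazure, Veldkamp, Mirkovi\'c--Rumynin in positive characteristic. Your sketch is therefore not competing against a written proof but filling in what those references are meant to supply, and most of it is sound: the realisation of $q^m_{[n]}$ via the iterated wedge $\bw{m}K^n\times\cdots\times\bw{m}K^n\to\bw{n}K^n$, the identification of the multiplier with $\det$, the symmetric/alternating dichotomy from $(-1)^{m^2}$, the torus-weight obstruction for $m\nmid n$, and the use of Lemma~\ref{lem:VPMax} in the $n=2m$ case are all correct.

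There is, however, a genuine gap in the implication ``$K\cdot q^m_{[n]}$ is the only $\bw{m}\SL_n$-stable line, so any similarity $g$ normalizes $\bw{m}\SL_n(K)$''. One-dimensionality of the invariants shows only that $g\,\bw{m}\SL_n(K)\,g^{-1}$ lies inside the isometry group of $q$; it does not pin down $\bw{m}\SL_n(K)$ as a subgroup of that isometry group, so there is no reason the conjugate must coincide with it. The repair is to invoke Lemma~\ref{lem:VPMax} here rather than only in the $n=2m$ case: since $\bw{m}\GL_n(K)$ is maximal among connected closed subgroups of $\GL_N(K)$ and sits inside the proper subgroup $\overline{G}_f(K)$, one gets $\bw{m}\GL_n(K)=\overline{G}^0_f(K)$; then $\overline{G}_f(K)$ normalizes its own identity component, and Lemma~\ref{lem:VPIrrTIandNorm} finishes. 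This is precisely the route the paper itself takes in Proposition~\ref{prop:LGLeqStabgoodOverK}, and it does not use uniqueness of the invariant for the reverse inclusion at all---uniqueness is a separate clause of the proposition, handled by the cited invariant theory.
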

	
	Henceforth, we use the uniform notation $q(x)$ for these forms and we assume that $m$ is even (unless otherwise specified); the case of odd $m$ can be addressed analogously. 
	
	So in the case of an algebraically closed field $K$,  the abstract group $\bw{m}\GL_{n}(K)$ consists of matrices $g\in\GL_N(K)$ for which there is a multiplier function $\lambda = \lambda(g) \in K^*$ such that $q(gx)=\lambda(g)q(x)$ for all $x\in K^N$. The calculation of $\lambda$ on a generic diagonal matrix $d_i(\xi)\in\GL_n(K)$ shows that $\lambda(g)=\det(g)$. Since the coefficients of these forms equal $\pm 1$, the forms are defined over $\mathbb{Z}$. The same calculation confirms the answer over an arbitrary ring:
	$$q(\bw{m}g \cdot x) = \det(g) \cdot q(x) \text{ for  }g \in \GL_{n}(R).$$

	To get a direct analog of Proposition~\ref{prop:FormsGood} over arbitrary rings, we change our focus from forms of high degree to the corresponding multilinear forms. Concretely, let $k:=\frac{n}{m}\in\mathbb{N}$, then a [full] \textit{polarization} for the forms $q(x) = q_{[n]}^m(x)$ is a $k$-linear form $f_{[n]}^m$:
	$$f(x) = f^m_{[n]}(x^1,\dots,x^k)=\sum \sign(I_{1}, \dots, I_k)\; x^1_{I_{1}}\dots x^{k}_{I_k},$$
	where the sum ranges over all \textit{ordered} partitions of the set $[n]$ into $m$-element subsets. 
	\begin{prop}\label{prop:FormsInvariantUnderLE}
		Let $R$ be an arbitrary commutative ring and $\sfrac{n}{m} \in \mathbb{N}$. The form $f$ is invariant under the action of $\bw{m}\E_n(R)$ and it is multiplied by $\xi$ under the action of a weight element $\bw{m}d_i(\xi)$.
	\end{prop}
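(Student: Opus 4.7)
The plan is to realize $f = f^m_{[n]}$ as the top-degree component of a genuine wedge product in the full exterior algebra $\Lambda^{\bullet}R^n = \bigoplus_{j=0}^{n} \bw{j}R^n$, and then deduce both assertions from the functoriality of $\Lambda^{\bullet}$ under $\GL_n$ together with the rank-one identification $\bw{n}R^n \cong R\cdot e_{[n]}$.

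Concretely, I would view each $x^j \in V(\varpi_m) = \bw{m}R^n$ inside $\Lambda^{\bullet}R^n$, so that $x^j = \sum_{I \in \bw{m}[n]} x^j_I\, e_I$. Expanding the product $x^1 \wedge \cdots \wedge x^k$ and using the fact that $e_{I_1}\wedge\cdots\wedge e_{I_k}$ vanishes unless $(I_1,\ldots,I_k)$ is an ordered partition of $[n]$, in which case it equals $\sign(I_1,\ldots,I_k)\, e_{[n]}$, yields the identity
\[
x^1 \wedge \cdots \wedge x^k \;=\; f^m_{[n]}(x^1,\ldots,x^k) \cdot e_{[n]}
\]
valid over any commutative ring $R$. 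This presentation is uniform for even and odd $m$: the extra signs appearing in the odd case of Proposition~\ref{prop:FormsGood} are precisely the Koszul signs furnished by the wedge product on odd-degree vectors.

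Next I would invoke functoriality: for any $g \in \GL_n(R)$ the induced map $\Lambda^{\bullet}g$ is a graded algebra endomorphism, hence
\[
(\bw{m}g)x^1 \wedge \cdots \wedge (\bw{m}g)x^k \;=\; \bw{n}g\bigl(x^1 \wedge \cdots \wedge x^k\bigr),
\]
and $\bw{n}g$ acts on the free rank-one module $\bw{n}R^n$ by multiplication by $\det(g)$. Combining with the identity above gives the master equation
\[
f^m_{[n]}\bigl((\bw{m}g)x^1,\ldots,(\bw{m}g)x^k\bigr) \;=\; \det(g)\cdot f^m_{[n]}(x^1,\ldots,x^k)
\]
for every $g \in \GL_n(R)$. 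Specializing to an elementary transvection $g = t_{i,j}(\xi)$ gives $\det(g) = 1$ and hence invariance of $f$ under each generator of $\bw{m}\E_n(R)$; specializing to $g = d_i(\xi)$ gives $\det(g) = \xi$, which is precisely the second claim.

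I do not expect a substantive obstacle. The only item requiring care is the sign bookkeeping in the odd-$m$ case, and this is rendered automatic by recognizing $f$ as the coefficient of $e_{[n]}$ in an honest exterior product. An alternative, more pedestrian route would be to feed the explicit formula~\eqref{eq:m} for $\bw{m}t_{i,j}(\xi)$ directly into $f$ and check cancellations monomial by monomial; the functorial approach outlined above circumvents that bookkeeping entirely and explains in one stroke why the multiplier on the torus is forced to be $\det$.
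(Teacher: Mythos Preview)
Your argument is correct and is genuinely different from the paper's main proof. The paper does open with a one-line observation in the same spirit as yours---that the multiplier $\lambda(g)$ is a polynomial one-dimensional representation agreeing with $\det$ over $\mathbb{C}$, hence equals $\det$ over any ring---but then explicitly sets that aside and carries out the ``more pedestrian route'' you mention: it feeds the formula~\eqref{eq:m} for $\bw{m}t_{i,j}(\xi)$ into $f$, pairs up the new monomials coming from the partitions $(iL_1,jL_2,\ldots)$ and $(jL_1,iL_2,\ldots)$, and verifies by hand that their signs are opposite. Your exterior-algebra realization $x^1\wedge\cdots\wedge x^k = f(x^1,\ldots,x^k)\,e_{[n]}$ together with the fact that $\Lambda^\bullet g$ is a graded ring homomorphism with top component $\det(g)$ bypasses all of that bookkeeping and handles even and odd $m$ uniformly. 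The trade-off is that the paper's explicit monomial-level analysis is not wasted: the sign identities it isolates are exactly what is invoked in the final step of Proposition~\ref{prop:FormsGoodRing} (uniqueness of the semi-invariant form), so the direct calculation earns its keep later, whereas your slicker argument would leave that step to be redone.
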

	\begin{proof}
		As we noted previously, the multiplier $\lambda(g)$ is equal to the determinant. Indeed, $\lambda(g)$ is a one-dimensional representation, i.e. is a homomorphism $\GL_n(R)\longrightarrow\GL_1(R)$. Moreover, $\lambda(g)$ is a polynomial map that equals the determinant of $g$ over $\mathbb{C}$. Thus $\lambda(g)=\det(g)$ for an arbitrary ring $R$. And then the statement is obvious. But below we prove the proposition by direct calculation.
		
		We show that $f(gx^1,\dots,gx^k)=\xi f(x^1,\dots,x^k)$, where $g=\bw{m}d_i(\xi)$. Since $I_1,\ldots,I_k$ is an ordered partition of $[n]$, the number $i$ belongs to the index of only one variable $x^l_{I_l}$ in every monomial $x^1_{I_{1}}\dots x^{k}_{I_k}$ of the form $f$. Thus every monomial of $f(gx^1,\dots,gx^k)$ has the form $\pm x^1_{I_1}\ldots x^{l-1}_{I_{l-1}}\xi x^l_{I_l}x^{l+1}_{I_{l+1}}\ldots x^k_{I_k}$.
		
		Now let $g=\bw{m}t_{i,j}(\xi)$. By~\eqref{eq:m} the matrix $g$ is equal to the product of transvections $t_{iL,jL}(\sign(i,L)\sign(j,L)\xi)$ with $L\in\bw{m-1}\bigl([n]\smallsetminus \{i,j\}\bigr)$. Therefore exactly $\scriptstyle{\binom{n-2}{m-1}}$ coordinates change in the vector $gx, x\in R^N$: $(gx)_{iL}=x_{iL}+\sign(i,L)\sign(j,L)\xi x_{jL}$. Then in the form $f(gx^1,\dots,gx^k)-f(x^1,\dots,x^k)$ all monomials have the form:
		$$\pm x^1_{I_1}\ldots x^{l-1}_{I_{l-1}}\bigl(\sign(i,L)\sign(j,L)\xi x^l_{jL}\bigr)x^{l+1}_{I_{l+1}}\ldots x^k_{I_k},$$
		where $I_l=iL$, $L\in\bw{m-1}\bigl([n]\smallsetminus \{i,j\}\bigr)$. Let $I_1,\ldots,I_k$ be a partition of $[n]$ where $I_l=iL_1$, $I_p=jL_2$, $L_1,L_2\in\bw{m-1}\bigl([n]\smallsetminus \{i,j\}\bigr)$. Then the indices $\tilde{I}_1,\ldots,\tilde{I}_k$, where $\tilde{I}_l=jL_1$, $\tilde{I}_p=iL_2$, form a partition of $[n]$ as well. Therefore the sum of the corresponding monomials equals 
		\begin{align*}
			&\sign(I_1,\ldots,I_k)x^1_{I_1}\ldots x^{p}_{jL_2}\ldots x^{l-1}_{I_{l-1}}\bigl(\sign(i,L_1)\sign(j,L_1)\xi x^l_{jL_1}\bigr)x^{l+1}_{I_{l+1}}\ldots x^k_{I_k}+
			\\
			&\sign(\tilde{I}_1,\ldots,\tilde{I}_k)x^1_{\tilde{I}_1}\ldots x^l_{jL_1}\ldots x^{l-1}_{\tilde{I}_{l-1}}\bigl(\sign(i,L_2)\sign(j,L_2)\xi x^p_{jL_2}\bigr)x^{l+1}_{\tilde{I}_{l+1}}\ldots x^k_{\tilde{I}_k}
		\end{align*}
		It remains to check that the corresponding signs are opposite:
		$$\sign(I_1,\ldots,I_k)\sign(i,L_1)\sign(j,L_1)=-\sign(\tilde{I}_1,\ldots,\tilde{I}_k)\sign(i,L_2)\sign(j,L_2).$$
		Multiplying this equality by $\sign(j,L_2)\sign(j,L_1)$, we obtain
		$$\sign(I_1,\ldots,I_k)\sign(i,L_1)\sign(j,L_2)=-\sign(\tilde{I}_1,\ldots,\tilde{I}_k)\sign(i,L_2)\sign(j,L_1).$$
		And this is equivalent to 
		$$\sign(I_1,\ldots,I_k)=-\sign(\tilde{I}_1,\ldots,\tilde{I}_k),$$
		where the indices $I_p,\tilde{I}_p$ and $I_l,\tilde{I}_l$ are unordered.
		
		If $m$ is even, then this equality is equivalent to $\sign(iL_1,jL_2)=-\sign(jL_1,iL_2)$.
		Since $iL_1,jL_2$ and $jL_1,iL_2$ differ by an odd number of transpositions, the signs are opposite. Similarly, $I_1,\ldots,I_k$ and $\tilde{I}_1,\ldots,\tilde{I}_k$ differ by an odd number of transpositions for odd $m$.
	\end{proof}

    We denote the ring of all polynomials in (families of) variables $x^1 = \{x^1_{I}\}_{I \in \bw{m}[n]}, \dots,\\ x^k = \{x^k_{I}\}_{I \in \bw{m}[n]}$ with $R$-coefficients by $R[x^1, \dots, x^k]$. We consider a $\mathbb{Z}^k$-grading on this ring given by sums of degrees in each of the families $x^1, \dots, x^k$, e.g. the form $f = f^{m}_{[n]}(x^1,\dots , x^k)$ has grading $(1,\dots,1)$ as exactly one of variables from each families appears in each monomial of $f$. The submodule of all forms with grading $(1,\dots, 1)$ we denote by $R[x^1, \dots, x^k]_{(1,\dots, 1)}$.
    
    Applying the calculations similar to the previous proof, we get the uniqueness result for $\bw{m}\E_n(R)$--semi-invariant forms.
    
    \begin{prop}\label{prop:FormsGoodRing}	
    Let $R$ be an arbitrary ring and suppose $\sfrac{n}{m} \in \mathbb{N}$. Then every $\bw{m}\E_n(R)$--semi-invariant form in the space of multilinear forms $R[x^1, \dots, x^k]_{(1,\dots, 1)}$ is a multiple of  $f = f^{m}_{[n]}(x^1,\dots , x^k)$.
    \end{prop}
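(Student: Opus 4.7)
The plan is to translate the $\bw{m}\E_n(R)$-invariance of a generic form $F = \sum_{\vec I} c_{\vec I}\, x^1_{I_1} \cdots x^k_{I_k}$ into a system of $\mathbb{Z}$-linear relations on the coefficients $c_{\vec I}$, and then solve that system directly by reversing the sign-pairing argument from the proof of Proposition~\ref{prop:FormsInvariantUnderLE}. First, since $\bw{m}\E_n(R)$ is perfect for $n \geq 3$, semi-invariance collapses to plain invariance; and by universality of the transvection parameter, the identity $F(\bw{m}t_{i,j}(\xi) x^1,\ldots,\bw{m}t_{i,j}(\xi) x^k) = F(x^1,\ldots,x^k)$ may be read in $R[\xi]$. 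Using that $\bw{m}t_{i,j}(\xi)$ acts on coordinates as $x_I \mapsto x_I + \xi\,\sigma_{I\setminus i}\, x_{(I\setminus i)\cup j}$ for $i \in I,\, j \notin I$ (identity otherwise), matching the coefficient of $\xi$ produces, for each pair $(i,j)$ and each $\vec J \in \bw{m}[n]^k$, the $\mathbb{Z}$-linear relation
\[
\sum_{\ell\colon\, j \in J_\ell,\, i \notin J_\ell} \sigma_{J_\ell \setminus j}\; c_{\vec J[\ell \mapsto (J_\ell \setminus j) \cup i]} \;=\; 0.
\]

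Second, I would show that $c_{\vec I} = 0$ whenever $\vec I = (I_1,\dots,I_k)$ is not an ordered partition of $[n]$. Since $\sum_\ell |I_\ell| = km = n$, failing to be a partition forces simultaneously some $a \in [n]$ to lie in at least two blocks and some $b \in [n]$ to lie in none. Specialize the displayed relation to $(i,j) = (a,b)$ and to the multi-index $\vec J$ obtained from $\vec I$ by replacing one block $I_{\ell_0}$ containing $a$ with $(I_{\ell_0}\setminus a)\cup b$: since $b$ does not appear elsewhere in $\vec I$, only $\ell = \ell_0$ contributes to the sum, which reduces to $\pm c_{\vec I} = 0$.

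Third, for ordered partitions $\vec I$, I would specialize the relation to $(i,j) = (a,b)$ and to a multi-index $\vec J$ in which $a$ appears in no block and $b$ appears in exactly two blocks $\ell_1, \ell_2$; exactly two summands survive, producing a $\pm$-linear relation between $c_{\vec I_1}$ and $c_{\vec I_2}$ for two ordered partitions $\vec I_r = \vec J[\ell_r \mapsto (J_{\ell_r}\setminus b)\cup a]$ that differ by the transposition of $a$ and $b$ between their $\ell_1$- and $\ell_2$-blocks. The precise sign identity needed for $c_{\vec I}/\sign(\vec I)$ to be preserved under every such swap is exactly the one verified inside the proof of Proposition~\ref{prop:FormsInvariantUnderLE} (read in reverse). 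Because the transpositions $(a\,b) \in S_n$ act transitively on the set of ordered partitions of $[n]$ into $k$ blocks of size $m$, the quantity $c_{\vec I}/\sign(\vec I)$ is a single scalar $c_0 \in R$, whence $F = c_0 \cdot f$. The main technical obstacle is the sign bookkeeping in this last step; it is essentially identical to the pairing calculation already carried out in the proof of Proposition~\ref{prop:FormsInvariantUnderLE}, so executing it amounts to transcribing that computation in the opposite direction.
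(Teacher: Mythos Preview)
Your proposal is correct and follows essentially the same approach as the paper's own proof: both extract the linear-in-$\xi$ relations from the action of $\bw{m}t_{i,j}(\xi)$, use a counting/support argument to kill coefficients at non-partitions, and then reduce the partition case to the sign identity already established in Proposition~\ref{prop:FormsInvariantUnderLE}. Your explicit reduction of semi-invariance to invariance via perfectness of $\bw{m}\E_n(R)$ for $n\geq 3$ is a clean addition that the paper leaves implicit.
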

    \begin{proof}
    Consider arbitrary $F(x^1, \dots, x^k) = \sum a_{I_1, \dots, I_k}x^1_{I_1}\dots x^k_{I_k} \in R[x^1, \dots, x^k]_{(1,\dots, 1)}$.

    We first prove that for each nonzero $a_{I_1, \dots, I_k}$ the coefficients $I_1, \dots, I_k$ form a partition of $[n]$. Assume that there exists $j \in [n]$ such that $j \not\in I_1 \cup \dots \cup I_k$ for some tuple $(I_1, \dots, I_k)$ with $a_{I_1, \dots, I_k} \neq 0$. Choose an arbitrary $i$ appearing in at least one $I_i$; without loss of generality, $I_1 = i L_1$. Action by $\bw{m}t_{ij}(\zeta)$ on the monomial $a_{I_1, \dots, I_k}x^1_{I_1}\dots x^k_{I_k}$ contains the monomial $\pm \zeta \cdot a_{I_1, \dots, I_k} x^1_{j L_1}\dots x^k_{I_k}$. This monomial appears only for $a_{I_1, \dots, I_k}x^1_{I_1}\dots x^k_{I_k}$ due to the conditions on $j$. We get a contradiction with the semi-invariancy of $F$, so each $j\in [n]$ appears in at least one $I_1, \dots, I_k$.

    As each $I_i$ has cardinality $m$, the cardinality of their union is at most $m \cdot k =n$. Therefore $a_{I_1, \dots, I_k} \neq 0$ implies that $\{I_i\}$ forms a (non-intersecting) partition of $[n]$.

    For $a_{I_1, \dots, I_k} \neq 0$, we take arbitrary $i\in I_1$ with $I_1 = i L_1$ and $j \in I_2$ with $I_2 = j L_2$. Then action of $\bw{m}t_{ij}(1)$ on $a_{I_1, \dots, I_k}x^1_{I_1}\dots x^k_{I_k}$ has the form $a_{I_1, \dots, I_k}x^1_{I_1}\dots x^k_{I_k} + \sign(j,L_1)\cdot a_{I_1, \dots, I_k}x^1_{j L_1}x^2_{I_2}\dots x^k_{I_k}$. The latter term does not appear in $F$ as $j L_1 \cap I_2 = j$, therefore we need to cancel it out to get the semi-invariancy. Then $\sign(j,L_1)\cdot a_{I_1, \dots, I_k}x^1_{j L_1}x^2_{I_2}\dots x^k_{I_k}$ is forced to be equal to $-\sign(j,L_2)\cdot a_{j L_1,i L_2, \dots, I_k}x^1_{j L_1}x^2_{j L_2}\dots x^k_{I_k}$ coming from the action on the monomial $a_{j L_1,i L_2 \dots, I_k}x^1_{j L_1}x^2_{i L_2}\dots x^k_{I_k}$. In other words, for every $i \neq j$ from the disjoint partition $i L_1 \sqcup j L_2 \sqcup \dots \sqcup I_k = [n]$ we get the equation: 
    $$\sign(j,L_1)\cdot a_{i L_1, j L_2, \dots, I_k}+\sign(j,L_2)\cdot a_{j L_1,i L_2, \dots, I_k} = 0.$$
    Thus the final step of Proposition~\ref{prop:FormsInvariantUnderLE} proof implies that every non-zero $a_{I_1, \dots, I_k}$ coincides with $\sign(I_1,\dots, I_k)\cdot a$ for some shared $a \in R$.
    \end{proof}
    
	Let us define a group $G_f(R)$ as the group of linear transformations preserving the form $f(x^1,\ldots,x^k)$:
	$$G_f(R):=\{g\in\GL_N(R)\mid f(gx^1,\ldots,gx^k)=f(x^1,\ldots,x^k)\}.$$
	It is an analogue of the Chevalley group for the exterior powers. We define an analogue of the extended Chevalley group:
	\begin{align*}
		\overline{G}_f(R):=\{g\in\GL_N(R)\mid\text{ there exists } &\lambda = \lambda(g)\in R^* \text{ such that }
		\\
		&\quad f(gx^1,\ldots,gx^k)=\lambda(g)f(x^1,\ldots,x^k)\}.
	\end{align*}
	The functors $R\mapsto\overline{G}_f(R)$ and $R\mapsto G_f(R)$ define affine group schemes over $\mathbb{Z}$. Combining Proposition~\ref{prop:FormsInvariantUnderLE} and the reasonings before it for all rings $R$, we have the morphism of group schemes:
	\begin{align*}
		\iota\colon \bw{m}\GL_{n} \longrightarrow \overline{G}_f \quad\text{ or, after Theorem~\ref{thm:VPWaterhouse},}\quad \iota\colon \GL_n/\mu_m \longrightarrow \overline{G}_f.
	\end{align*}
	
	Ideally, we can expect the group $\bw{m}\GL_{n}(R)$ to coincide with $\overline{G}_f(R)$  (and $\bw{m}\SL_{n}(R)$ to coincide with $G_f(R)$) in the case $\sfrac{n}{m} \in \mathbb{N}$. Theorem~\ref{thm:exGLandForms} is a precise form of the expectation:
 
	\begin{theorem}\label{thm:StabAndGenerealPower}
		If $\sfrac{n}{m}$ is an integer greater than 2, then the group $\bw{m}\GL_{n}(R)$ coincides with $\overline{G}_f(R)$, and $\bw{m}\SL_{n}(R)$ coincides with $G_f(R)$ for an arbitrary ring $R$.
	\end{theorem}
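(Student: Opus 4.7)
The plan is to promote the closed immersion $\iota\colon \bw{m}\GL_n \hookrightarrow \overline{G}_f$ constructed just before the theorem statement to a scheme-theoretic isomorphism over $\mathbb{Z}$ by the same Waterhouse-style template used for Theorem~\ref{thm:VPWaterhouse}. I will verify two conditions: (i) $\iota(K)$ is bijective on points over every algebraically closed field $K$, and (ii) $\dim\Lie(\overline{G}_{f,K})\leq n^{2}$ for every field $K$. Granted these, Theorem~1.6.1 of~\cite{WaterhouseDet} delivers the isomorphism of affine group schemes over $\mathbb{Z}$. The statement for $\bw{m}\SL_n$ and $G_f$ then follows by intersecting with the kernel of the multiplier character $\lambda\colon \overline{G}_f \to \mathbb{G}_m$: on $\bw{m}\GL_n$-points this character restricts to the determinant (as computed after Proposition~\ref{prop:FormsInvariantUnderLE}), which is well defined on $\GL_n/\mu_m$ precisely because $m\mid n$, and whose kernel is $\bw{m}\SL_n$ by Lemma~\ref{lem:VPKer}.

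For condition (i), Proposition~\ref{prop:FormsGood} gives that $\bw{m}\GL_n(K)$ is exactly the group of similarities of the polynomial form $q = q^{m}_{[n]}$ over an algebraically closed field $K$. Since $f$ is the full polarization of $q$ and has integer coefficients $\pm 1$, a monomial-by-monomial comparison shows in every characteristic that $g^{\otimes k}f=\lambda\cdot f$ if and only if $g$ scales $q$ by $\lambda$. Therefore $\overline{G}_f(K)=\bw{m}\GL_n(K)$ on abstract groups, and $\iota(K)$ is bijective.

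For condition (ii), which is the main technical obstacle, the approach is a Lie-algebra analogue of the uniqueness Proposition~\ref{prop:FormsGoodRing}. Given $X\in\Lie(\overline{G}_{f,K})$, the identity $X.f=\mu\cdot f$ for some $\mu\in K$ is a system of linear equations on the matrix entries of $X$, indexed by the monomials of $f$. Running the partition-switching argument from the proofs of Propositions~\ref{prop:FormsInvariantUnderLE} and~\ref{prop:FormsGoodRing} at the infinitesimal level, I expect to show that after subtracting an appropriate element of the image of $\mathfrak{gl}_n \to \Lie(\bw{m}\GL_{n,K})$, the residual part of $X$ is forced, on each weight space of $V(\varpi_m)^{\otimes k}$ corresponding to an ordered partition of $[n]$ into $m$-subsets, to preserve that weight line; counting weights against $\dim\mathfrak{gl}_n=n^{2}$ then yields the bound.

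The hard part will be propagating the infinitesimal semi-invariance uniformly in characteristic, especially in characteristics dividing $m$ or $n/m$, where some transvections of $\bw{m}\E_n(K)$ that were essential in Proposition~\ref{prop:FormsGoodRing} act trivially at the Lie-algebra level and must be replaced by commutators or by using the $\bw{m}\E_n$-action itself rather than its linearization. This is exactly the arithmetic subtlety that forces the hypothesis $n/m\in\mathbb{N}_{\geq 3}$ in Proposition~\ref{prop:FormsGood}, so I expect no further restriction to arise. Once (i) and (ii) are in hand, Theorem~1.6.1 of~\cite{WaterhouseDet} applied to the closed immersion $\iota$ between flat affine group schemes of finite type over $\mathbb{Z}$ yields the isomorphism $\bw{m}\GL_n\cong \overline{G}_f$, and restriction to $\ker\lambda$ yields $\bw{m}\SL_n\cong G_f$.
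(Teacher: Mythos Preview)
Your template is the paper's own: apply the Waterhouse criterion (Lemma~\ref{lem:Waterhouse}) to the monomorphism $\iota\colon \GL_n/\mu_m \to \overline{G}_f$, checking a dimension bound on $\Lie(\overline{G}_{f,K})$ and the behaviour on $K$-points. The divergences are in how you propose to verify the hypotheses.

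For condition~(i) you assert that ``a monomial-by-monomial comparison shows in every characteristic that $g^{\otimes k}f=\lambda f$ iff $g$ scales $q$ by $\lambda$''. This is the step that fails. The restriction $f(x,\dots,x)=k!\,q(x)$ (for even $m$) is vacuous when $\opn{char} K$ divides $k!$, and there is no clean monomial comparison that recovers the similitude group of $q$ from that of $f$ in such characteristics. The paper does \emph{not} attempt this identification; instead, Proposition~\ref{prop:LGLeqStabgoodOverK} uses the maximality of $\bw{m}\GL_n(K)$ among closed connected subgroups of $\GL_N(K)$ (Lemma~\ref{lem:VPMax}, via Seitz) to get $\bw{m}\GL_n(K)=\overline{G}_f^{\,0}(K)$, and then the normalizer clause~(3) of Lemma~\ref{lem:Waterhouse} is handled separately via Lemma~\ref{lem:VPIrrTIandNorm}. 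You should drop the polarization argument and invoke maximality.

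For condition~(ii) your instinct is right but your anticipated difficulty is a red herring. The paper's computation (Theorem~\ref{thm:dimLie}) never passes through $\bw{m}\E_n$ or transvections: it evaluates the identity $\sum_j f(x^1,\dots,yx^j,\dots,x^k)=(\lambda-1)f(x^1,\dots,x^k)$ on basis vectors $e_{I_1},\dots,e_{I_k}$ and reads off the same three families of relations on $y_{I,J}$ (indexed by $d(I,J)\leq m-2$, $d(I,J)=m-1$, and the diagonal) as in Lemma~\ref{lem:VPdimLie}. No characteristic hypothesis enters. Your worry about ``transvections acting trivially at the Lie-algebra level'' does not arise, because the argument is linear in $y$ from the outset.

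Finally, for $\bw{m}\SL_n$ the paper does not deduce it by taking $\ker\lambda$; it runs the Waterhouse argument in parallel and proves the sharper bound $\dim\Lie(G_{f,K})\leq n^2-1$ directly, via one further nontrivial relation among the diagonal entries $y_{K_j,K_j}$ coming from $\lambda=1$. Your kernel argument would also work once $\bw{m}\GL_n\cong\overline{G}_f$ is established, but you should note that $\lambda$ is genuinely a character on $\overline{G}_f$ (not just on its image), which requires a word of justification.
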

 
	\begin{remark} If $n = 2m$ and $2$ is not a zero-divisor, then $\overline{G}_f(R) = \GO_{N}(R)$ or $\GSp_{N}(R)$ depending on the parity of $m$. So in this case $\bw{m}\GL_{n}(R)$ is a subgroup of the orthogonal or the symplectic group, respectively. Moreover, if $(n,m) = (4,2)$, then $\GO_{6}(R)$ equals $\bw{2}\GL_{4}(R)$.

    In general case, stabilizer of a quadratic form and its polarization do not coincide. Therefore, we only have the inclusion $\GO_{N}(R) \leq \overline{G}_f(R)$ or $\GSp_{N}(R) \leq \overline{G}_f(R)$.
	\end{remark}
	
	The \textit{proof} of the theorem follows the classic Waterhouse Lemma~\cite[Theorem~$1.6.1$]{WaterhouseDet}. This result essentially reduces the verification of an isomorphism of affine group schemes to the isomorphism of their groups of points over algebraically closed fields and the dual numbers\footnote{Recall that the algebra $K[\delta]$ of dual numbers over a field is isomorphic as a $K$-module to $K\oplus K\delta$ with multiplication given by $\delta^2=0$.} over such fields. 
	
	We note that an alternative proof based on SGA~\cite{SGA}, Exp. VI$\_$b, Cor. 2.6 can be developed, but we do not pursue this direction here.
	
	\begin{lemma}\label{lem:Waterhouse}
		Let $G$ and $H$ be affine group schemes of finite type over $\mathbb{Z}$ where $G$ is flat, and let $\varphi\colon G\longrightarrow H$ be a morphism of group schemes.  Assume that the following conditions are satisfied for any algebraically closed field $K$:
		\begin{enumerate}
			\item $\dim(G_K)\geq\dim_K(\Lie(H_K))$,
			\item $\varphi$ induces monomorphisms of the groups of points $G(K)\longrightarrow H(K)$ and
			$G(K[\delta])\longrightarrow H(K[\delta])$,
			\item the normalizer $\varphi(G^0(K))$ in $H(K)$ is contained in $\varphi(G(K))$.
		\end{enumerate}
		Here $G^0$ denotes the connected component of the identity in $G$, $G_K$ denotes the extension of scalars of $G$, and $\Lie(H_K)$ denotes the Lie algebra of the scheme $H_K$. 
		
		Then $\varphi$ is an isomorphism of group schemes over $\mathbb{Z}$.
	\end{lemma}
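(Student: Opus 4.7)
The plan is a two-stage argument. First, fix an algebraically closed field $K$ and show that $\varphi_K \colon G_K \to H_K$ is an isomorphism of group schemes over $K$ using the three hypotheses specialized at $K$. Then, since $G$ is flat and both $G, H$ are of finite type over $\mathbb{Z}$, descend to an isomorphism of affine $\mathbb{Z}$-schemes via the standard fiberwise-isomorphism criterion: once $\varphi$ is an isomorphism over every geometric point of $\mathrm{Spec}\,\mathbb{Z}$, flatness of $\mathcal{O}(G)$ as a $\mathbb{Z}$-module together with Nakayama's lemma along each prime upgrades the fiberwise isomorphism to an isomorphism $\varphi^\# \colon \mathcal{O}(H) \to \mathcal{O}(G)$.

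Over a fixed $K$, the canonical short exact sequence $0 \to \Lie(X)_K \to X(K[\delta]) \to X(K) \to 1$, valid for any affine $K$-group scheme $X$, decomposes hypothesis (2) into injectivity of $\varphi$ on $K$-points and injectivity of $d\varphi_K \colon \Lie(G_K) \to \Lie(H_K)$ at the identity. Translating by the group law propagates the tangent-space injectivity to every closed point, so $\varphi_K$ is unramified; its image is a constructible subgroup of $H_K$, hence closed; taken together, $\varphi_K$ is a monomorphism of finite-type $K$-group schemes, equivalently a closed immersion. The chain
\[
\dim G_K = \dim \varphi_K(G_K) \le \dim H_K \le \dim_K \Lie(H_K) \le \dim G_K,
\]
in which the final inequality is hypothesis (1) and the middle one is the general bound at the identity, forces equality throughout. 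In particular $H_K$ is smooth, and $\varphi_K(G_K^0)$ is a closed connected subgroup of $H_K^0$ of the same dimension, so $\varphi_K(G_K^0) = H_K^0$. Because $H_K^0$ is characteristic in $H_K$, every element of $H(K)$ normalizes $\varphi_K(G^0(K))$, and hypothesis (3) then yields $H(K) \subseteq \varphi_K(G(K))$. Combined with the closed-immersion property, $\varphi_K$ is an isomorphism over $K$.

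The most delicate step is the promotion of $K[\delta]$-point injectivity to scheme-theoretic unramifiedness of $\varphi_K$: a single injectivity statement at the identity must be upgraded, via translation, to the behavior at every closed point, and then combined with the standard characterization of algebraic-group monomorphisms as closed immersions. A secondary subtlety lies in the role of hypothesis (1): the inequality $\dim G_K \ge \dim_K \Lie(H_K)$ is designed precisely to preclude non-reduced thickenings on the $H$-side, whose presence would inflate $\Lie(H_K)$ beyond $\dim H_K$ and break the dimension chain. Given these ingredients together with the flatness assumption on $G$, the descent from ``isomorphism over every algebraically closed $K$'' to ``isomorphism over $\mathbb{Z}$'' is routine.
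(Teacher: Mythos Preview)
The paper does not prove this lemma; it simply records it as a quotation of Waterhouse's Theorem~1.6.1 from~\cite{WaterhouseDet} and then applies it as a black box. Your proposal therefore cannot be compared against a proof in the paper, but it is a correct reconstruction of the argument behind the cited result, and it follows essentially the same two-stage strategy (isomorphism over each algebraically closed field, then descent to $\mathbb{Z}$ using flatness of $G$) that Waterhouse uses.

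Two places deserve a slightly sharper formulation. First, the passage from ``injective on $K$- and $K[\delta]$-points'' to ``$\varphi_K$ is a monomorphism of schemes'' is best phrased via the kernel: those two injectivity statements say precisely that $\ker\varphi_K$ has a single $K$-point and trivial Lie algebra, hence is \'etale with one geometric point, hence is the trivial subgroup scheme; a group-scheme homomorphism with trivial kernel is then a closed immersion. Your route through unramifiedness and constructibility reaches the same conclusion but skips this identification. Second, the descent step is not literally Nakayama over $\mathbb{Z}$, since $\mathcal{O}(G)$ and $\mathcal{O}(H)$ are not finitely generated $\mathbb{Z}$-modules. The clean version is: the cokernel $M$ of $\varphi^{\#}$ is a finitely generated module over the noetherian ring $\mathcal{O}(G)$ with $M/pM=0$ for every prime $p$ and $M\otimes\mathbb{Q}=0$; hence $M$ is $\mathbb{Z}$-torsion and $\mathbb{Z}$-divisible, and any finite generating set is annihilated by a single integer, forcing $M=0$. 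Flatness of $\mathcal{O}(G)$ over $\mathbb{Z}$ then kills the relevant $\mathrm{Tor}_1$ term and the same argument applied to $\ker\varphi^{\#}$ gives injectivity. With these clarifications your outline is complete.
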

	
	In the case under consideration, the preliminary assumptions on the schemes are satisfied. Indeed, the schemes are of finite type being subschemes of appropriate $\GL_n$. The flatness condition follows from smoothness of the Chevalley--Demazure scheme $G$. All groups $G^0_K$ are smooth connected schemes of the same dimension. Moreover, we showed in the previous section that the normalizer of $\bw{m}\GL_n(K)$ in $\GL_N(K)$ coincides with $\bw{m}\GL_n(K)$. Thus condition (3) holds true.
	
	As we mentioned above, Theorem~\ref{thm:VPWaterhouse} shows that instead of a morphism $\GL_n/\mu_m \longrightarrow \overline{G}_f$ we can consider the morphism (which we call $\iota$ as well) $ \bw{m}\GL_{n} \longrightarrow \overline{G}_f$. Then Proposition~\ref{prop:FormsInvariantUnderLE} shows that $\bw{m}\E_n(R)$ is a subgroup of $\overline{G}_f(R)$ (as abstract groups) for any ring $R$. A standard argument shows that $\bw{m}\E_n(R)$ is dense in $\bw{m}\GL_{n}(R)$ for any local ring $R$. Therefore $\iota$ is a monomorphism for any local ring $R$.  So condition (2) follows.
	
	For $R = K$, an algebraically closed field, we can prove an even stronger statement:
	
	\begin{prop}\label{prop:LGLeqStabgoodOverK}
		Suppose $K$ is an algebraically closed field and $n\neq 2m$; then
		$$\bw{m}\GL_n(K)=\overline{G}^0_f(K)\quad\textrm{ and }\quad \bw{m}\SL_n(K)=G_f(K).$$
	\end{prop}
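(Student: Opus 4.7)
The plan is to combine Proposition~\ref{prop:FormsInvariantUnderLE} with the maximality of Lemma~\ref{lem:VPMax} and the self-normalization of Lemma~\ref{lem:VPIrrTIandNorm}. Proposition~\ref{prop:FormsInvariantUnderLE} immediately gives $\bw{m}\GL_n(K)\subseteq\overline{G}_f(K)$, with multiplier extending to $\det$ on the $\GL_n$-side, and consequently $\bw{m}\SL_n(K)\subseteq G_f(K)$ by imposing $\det=1$. Connectedness of $\GL_n$ and $\SL_n$ passes to their images under $\bw{m}$, so in fact $\bw{m}\GL_n(K)\subseteq\overline{G}_f^0(K)$ and $\bw{m}\SL_n(K)\subseteq G_f^0(K)$.

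For $\overline{G}_f^0(K)=\bw{m}\GL_n(K)$ I would argue by maximality: since $f$ is non-trivial, $\overline{G}_f$ is a proper closed subscheme of $\GL_N$, so $\overline{G}_f^0(K)$ is a proper connected closed subgroup of $\GL_N(K)$ containing $\bw{m}\GL_n(K)$. Lemma~\ref{lem:VPMax} in the case $n\neq 2m$ declares $\bw{m}\GL_n(K)$ maximal among such subgroups, and the equality follows.

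For $G_f^0(K)=\bw{m}\SL_n(K)$ the argument becomes formal once $\overline{G}_f^0$ is pinned down: $G_f^0(K)\subseteq\overline{G}_f^0(K)=\bw{m}\GL_n(K)$ because identity components of closed subgroups sit inside the ambient identity component. By the calculation on $\bw{m}d_i(\xi)$ in Proposition~\ref{prop:FormsInvariantUnderLE}, the multiplier $\lambda$ restricted to $\bw{m}\GL_n$ is exactly $\det$ pulled back from $\GL_n$, hence $G_f\cap\bw{m}\GL_n=\ker(\lambda|_{\bw{m}\GL_n})=\bw{m}\SL_n$. Combined with the reverse inclusion, $G_f^0(K)=\bw{m}\SL_n(K)$.

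To upgrade to the full equality $G_f(K)=\bw{m}\SL_n(K)$, observe that $G_f(K)$ normalizes its identity component $\bw{m}\SL_n(K)$. Over the algebraically closed $K$ the scalar matrices of $\GL_N(K)$ arise as $\bw{m}$-images of central elements of $\GL_n(K)$, so $\bw{m}\GL_n(K)=\bw{m}\SL_n(K)\cdot Z(\GL_N)(K)$; since every element of $\GL_N(K)$ normalizes the center $Z(\GL_N)(K)$, any normalizer of $\bw{m}\SL_n(K)$ in $\GL_N(K)$ also normalizes $\bw{m}\GL_n(K)$. Lemma~\ref{lem:VPIrrTIandNorm} with $n\neq 2m$ then yields $N_{\GL_N(K)}(\bw{m}\GL_n(K))=\bw{m}\GL_n(K)$, so $G_f(K)\subseteq\bw{m}\GL_n(K)$, and intersecting with $\ker\lambda$ finishes the argument. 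The main obstacle is precisely this last step: Lemma~\ref{lem:VPIrrTIandNorm} only provides self-normalization of $\bw{m}\SL_n$ inside $\SL_N$, so one has to bridge to $\GL_N$ through the decomposition $\bw{m}\SL_n(K)\cdot Z(\GL_N)(K)$, and the hypothesis $n\neq 2m$ is essential since in the exceptional case an outer involution enlarges both normalizers and the argument collapses.
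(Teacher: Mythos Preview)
Your argument is correct. For the $\GL$ part you follow the paper's proof exactly: inclusion from the invariance of $f$, connectedness, and then the maximality of $\bw{m}\GL_n(K)$ in $\GL_N(K)$ from Lemma~\ref{lem:VPMax}.

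For the $\SL$ part the paper simply writes ``the proof is similar'', which, taken literally (maximality of $\bw{m}\SL_n(K)$ inside $\SL_N(K)$), yields only $\bw{m}\SL_n(K)=G_f^0(K)$. You instead deduce $G_f^0(K)=\bw{m}\SL_n(K)$ from the already established equality $\overline{G}_f^0(K)=\bw{m}\GL_n(K)$ together with $G_f\cap\bw{m}\GL_n=\ker\lambda=\bw{m}\SL_n$, and then add a genuine extra step to pass from $G_f^0$ to the full $G_f$: every element of $G_f(K)$ normalizes $G_f^0(K)=\bw{m}\SL_n(K)$, hence (via the decomposition $\bw{m}\GL_n(K)=\bw{m}\SL_n(K)\cdot Z(\GL_N)(K)$ over algebraically closed $K$) normalizes $\bw{m}\GL_n(K)$, and Lemma~\ref{lem:VPIrrTIandNorm} forces it into $\bw{m}\GL_n(K)$, whence into $\bw{m}\SL_n(K)$ after intersecting with $\ker\lambda$. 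This is a point the paper leaves implicit, and your treatment is the more careful one; note that you actually invoke the $\GL$ clause of Lemma~\ref{lem:VPIrrTIandNorm} (self-normalization of $\bw{m}\GL_n$ in $\GL_N$), so your closing caveat about only having the $\SL$ clause is unnecessary.
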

	\begin{proof}
		The group $\bw{m}\GL_n(K)$ preserves the invariant form $f(x^1,\ldots,x^k)$ by Proposition~\ref{prop:FormsGood}, thus $\bw{m}\GL_n(K)\leq \overline{G}_f(K)$. Since $\bw{m}\GL_n(K)$ is connected, we have $\bw{m}\GL_n(K)\leq \overline{G}^0_f(K)$. Further, from Lemma~\ref{lem:VPMax} it follows that $\bw{m}\GL_n(K)$ is maximal among connected closed subgroups in $\GL_N(K)$. Since $\overline{G}_f(K)$ is a proper subgroup of $\GL_N(K)$, we obtain the reverse inclusion. For the group $\bw{m}\SL_n(K)$ the proof is similar. 
	\end{proof}
	
	To deal with condition (1), it only remains to evaluate the dimension of the Lie algebras $\overline{G}_f$ and $G_f$. First let us recall how the Lie algebra of the scheme $G_{nm}$ is defined, see~\S\ref{sec:StabPlu}. We follow the ideas of William Waterhouse~\cite[Lemmas $3.2$, $5.3$, and $6.3$]{WaterhouseDet}. 
	
	Let $K$ be an arbitrary field. Then Lie algebra  $\Lie((G_f)_K)$ of an affine group scheme $(G_f)_K$ is most naturally interpreted as the kernel of homomorphism $G_f(K[\delta])\longrightarrow G_f(K)$ sending $\delta$ to $0$, see~~\cite{Jantzen,Waterhouse1979IntroductionTA,HumphreyLinAlgGr,BorelLie}. Let $G$ be a subscheme of $\GL_n$. Then $\Lie(G_K)$ consists of all matrices $e+z\delta$, $z\in \M_n(K)$, satisfying the equations defining $G(K)$. Formally, the statement takes the following form when $G$ is the stabilizer of a system of polynomials:
	
	\begin{lemma}\label{lem:LieAlgEquas}
		Let $\varphi_1,\dots,\varphi_s\in K[x_1,\dots,x_t]$. Then a matrix $e+z\delta$ with $z\in \M_t(K)$ belongs to $\Lie(\Fix_K(\varphi_1,\dots,\varphi_s))$ if and only if
		$$ 
		\sum_{1\leq i,j\leq t}z_{ij}x_i \frac{\partial \varphi_h}{\partial x_j}=0
		$$
		for all $h=1,\dots,s$.
	\end{lemma}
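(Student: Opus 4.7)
The plan is to use the standard description of the Lie algebra as the kernel of the reduction morphism $\Fix_{K[\delta]}(\varphi_1,\dots,\varphi_s) \longrightarrow \Fix_K(\varphi_1,\dots,\varphi_s)$, $\delta \mapsto 0$, and to translate the defining equations of the stabilizer into the claimed system of partial differential equations via a first-order Taylor expansion over the dual numbers. Under this description, a matrix $e+z\delta$ lies in $\Lie\bigl(\Fix_K(\varphi_1,\dots,\varphi_s)\bigr)$ precisely when $\varphi_h\bigl((e+z\delta)x\bigr) = \varphi_h(x)$ holds as an identity in $K[\delta][x_1,\dots,x_t]$ for every $h = 1,\dots,s$.

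Next, I Taylor-expand each $\varphi_h$ at the point $x$. The perturbation $(e+z\delta)x - x$ is a $\delta$-multiple of a linear form in the variables $x_i$, so the identity $\delta^2 = 0$ truncates the Taylor series at the linear term:
$$
\varphi_h\bigl((e+z\delta)x\bigr) \;=\; \varphi_h(x) \;+\; \delta \sum_{1\leq i,j\leq t} z_{ij}\, x_i\, \frac{\partial \varphi_h}{\partial x_j}(x),
$$
with the exact arrangement of indices determined by the convention for the $\GL_t$-action on coordinates. All quadratic and higher contributions vanish identically because they are divisible by $\delta^2 = 0$.

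Finally, comparing the coefficients of $1$ and $\delta$ in the free $K[x_1,\dots,x_t]$-module $K[\delta][x_1,\dots,x_t]$, the equation $\varphi_h\bigl((e+z\delta)x\bigr) = \varphi_h(x)$ is equivalent to the vanishing of the $\delta$-coefficient, which is exactly the identity displayed in the statement. There is no substantive obstacle beyond careful bookkeeping of indices and of the action convention; the lemma is a direct unpacking of the dual-number definition of $\Lie$ together with the first-order Taylor formula.
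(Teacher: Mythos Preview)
The paper does not supply its own proof of this lemma; it is stated as a standard fact, with a pointer to Waterhouse~\cite[Lemmas~3.2, 5.3, 6.3]{WaterhouseDet} and the preceding sentence explaining that $\Lie(G_K)$ consists of matrices $e+z\delta$ satisfying the defining equations of $G$. Your argument---interpret $\Lie$ via the kernel of the reduction $K[\delta]\to K$, then Taylor-expand $\varphi_h\bigl((e+z\delta)x\bigr)$ to first order using $\delta^2=0$---is precisely the standard unpacking the paper has in mind, and it is correct. Your caveat about the index arrangement depending on the action convention is appropriate: with the paper's convention one indeed lands on $\sum_{i,j} z_{ij}\,x_i\,\partial\varphi_h/\partial x_j$.
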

	
	To illustrate the argument that will be utilized for Theorem~\ref{thm:dimLie}, we first provide an outline of the proof of Lemma~\ref{lem:VPdimLie}.
	
	\begin{proof}[Proof of Lemma~\ref{lem:VPdimLie}]
		We apply Lemma~\ref{lem:LieAlgEquas} to the case of the stabilizer of Pl\"ucker polynomials $f_{K,L}(x)$, where $K\in\bw{m-1}[n]$, $L\in\bw{m+1}[n]$. There are three types of equations on entries $z_{I,J}$, see~\cite[proof of Proposition~3]{VavPere}). 
		\begin{itemize}
			\item $d(I,J)\leq m-2$, so we are in the case $|I\cup J|\geq m+2$, and then $z_{I,J}=0$;
			\item $d(I,J)=d(M,H)=m-1$ and $I-J=H-M$, then $z_{I,J}=\pm z_{H,M}$;
			\item $d(I,J)=d(M,H)=m-1$ and $I-H=J-M$, then $z_{I,I}\pm z_{H,H}=\pm z_{J,J}\pm z_{M,M}$,
		\end{itemize}
		where indices $I\in\bw{m}[n]$ we conceive as roots of the corresponding representation, see the proof of Theorem~\ref{thm:dimLie} and the example next to this theorem for a detailed description of such approach.
		
		The first case does not contribute to dimension of the Lie algebra. Matrix entries $z_{I,J}$ from the second case give the contribution equal to $n(n-1)$. And the third case contributes no more than $n$ linearly independent variables. Summing up, we get the upper bound equal to $n^2$.
	\end{proof}
	
	We consider the schemes $G_f(K)$ and $\overline{G}_f(K)$. The Lie algebra $\Lie(G_f(K))$ consists of all matrices $g=e+y\delta$, $y\in\M_N(K)$, satisfying the condition $f(gx^1,\dots,gx^k)=f(x^1,\dots,x^k)$ for all $x^1,\dots,x^k\in K^N$. Similarly, $\Lie(\overline{G}_f(K))$ consists of all matrices $g=e+y\delta$ with $y\in\M_N(K)$ satisfying the condition
	$f(gx^1,\dots,gx^k)=\lambda(g)f(x^1,\dots,x^k)$ for all $x^1,\dots,x^k\in K^N$.
	\begin{theorem}\label{thm:dimLie}
		If $n\neq 2m$, then for any field $K$ the dimension of the Lie algebra $\Lie(\overline{G}_f(K))$ does not exceed $n^2$, whereas the dimension of the Lie algebra $\Lie(G_f(K))$ does not exceed $n^2-1$.
	\end{theorem}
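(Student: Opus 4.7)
The plan is to apply Lemma~\ref{lem:LieAlgEquas} to the defining equations of $\overline{G}_f$ and $G_f$ and then exploit the $\GL_n$-torus action on $\M_N(K)$ to decompose the constraints by weight, mirroring the three-case split used in the proof of Lemma~\ref{lem:VPdimLie}. Writing $g = e + y\delta$ with $y \in \M_N(K)$ and $\lambda(g) = 1 + \ell(y)\delta$, the defining equations linearize to
\[
\mathcal{L}_f(y) := \sum_{s=1}^{k} f(x^1,\dots, yx^s, \dots, x^k) = \ell(y)\cdot f(x^1,\dots,x^k),
\]
so $\Lie(\overline{G}_f(K))$ consists of those $y$ with $\mathcal{L}_f(y) \in K\cdot f$ and $\Lie(G_f(K)) = \ker \mathcal{L}_f$. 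In particular $\dim\Lie(\overline{G}_f(K)) \leq \dim\Lie(G_f(K))+1$, so it suffices to bound $\Lie(G_f(K))$ by $n^2-1$ and then add one for the extension by $\ell$.

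I would then decompose $y = \sum_\alpha y^{\alpha}$ by torus weights, where the entry $y_{I,J}$ has weight $\sum_{j\in J\smallsetminus I}\epsilon_j - \sum_{i\in I\smallsetminus J}\epsilon_i$. Since $f$ is homogeneous of weight $\epsilon_1+\dots+\epsilon_n$, the identity $\mathcal{L}_f(y) = \ell(y)f$ decouples across weight spaces, and we can analyze each weight separately by its \emph{distance level} $s := |I\smallsetminus J| = |J\smallsetminus I|$.

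For $s \geq 2$, the strategy is to select, for each prospective entry $y_{I,J}$, a non-partition monomial $x^1_{L_1}\cdots x^k_{L_k}$ whose coefficient in the expansion of $\mathcal{L}_f(y^\alpha)$ receives a contribution from exactly one index $r$; one engineers the overlap and coverage pattern of the $L_r$'s so that only one complementary set $I_r = [n]\smallsetminus\bigcup_{r'\neq r}L_{r'}$ matches the required differences $A=I\smallsetminus J$ and $B=J\smallsetminus I$. The vanishing of this coefficient then forces $y_{I,J}=0$. For $s=1$ the weight is a root $\epsilon_b-\epsilon_a$ of $A_{n-1}$, and I would use monomials with a single overlap $L_p\cap L_q=\{b\}$ and missing element $a$ to produce two-term linear relations that propagate across the entire $\binom{n-2}{m-1}$-dimensional root space, leaving one scalar parameter per root (matching the image of $d(\bw{m})(e_{a,b})$), for a total contribution of at most $n(n-1)$. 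For the zero weight, the equation reads $\sum_{s=1}^{k} y_{I_s} = \ell(y)$ for every ordered partition $(I_1,\dots,I_k)$ of $[n]$; swapping a single element between two blocks shows that every solution is of the form $y_I = \sum_{i\in I} a_i$ for some $(a_1,\dots,a_n)$ with $\ell(y) = \sum_i a_i$, yielding an $n$-dimensional space for $\overline{G}_f$ and its $(n-1)$-dimensional hyperplane $\sum a_i = 0$ for $G_f$. Adding the contributions gives $n(n-1)+n=n^2$ for $\overline{G}_f$ and $n(n-1)+(n-1)=n^2-1$ for $G_f$.

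The hard part will be the case $s\geq 2$: one must construct witness families of non-partition monomials isolating each off-diagonal entry through a single admissible summand, and this is exactly where the hypothesis $n/m \geq 3$ is used, since we need $k\geq 3$ blocks so that there is enough room both to create the prescribed overlap on $B$ and to accommodate the leftover elements in spare disjoint blocks. Once this vanishing is in hand, the $s=1$ and $s=0$ analyses are analogous to the corresponding parts of the Plücker computation from the proof sketch of Lemma~\ref{lem:VPdimLie}, and the counting is routine.
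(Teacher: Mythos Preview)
Your proposal is correct and follows essentially the same route as the paper: both linearize over the dual numbers, both split into the three cases according to $|I\smallsetminus J|$ (equivalently the paper's distance $d(I,J)=m-|I\smallsetminus J|$), and both isolate individual entries $y_{I,J}$ by substituting standard basis vectors into the multilinear identity. Your torus--weight language is a repackaging of the paper's case split, and your ``witness monomial'' description of the $s\geq 2$ case is exactly the paper's choice $x^1=e_J$, $x^l=e_{I_l}$ with $I,I_2,\dots,I_k$ a partition of $[n]$ arranged so that $J$ meets both $I_2$ and $I_3$ (this is indeed where $k\geq 3$ enters).

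The one genuine difference is the zero-weight (diagonal) analysis. The paper first bounds the diagonal contribution by $n$ via the relations $y_{I,I}+y_{J,J}=y_{H,H}+y_{M,M}$ and then, for $G_f$, performs a lengthy explicit computation in root coordinates to exhibit one further nontrivial relation among the selected $y_{K_j,K_j}$. Your parametrization argument is considerably cleaner and yields the extra relation $\sum_i a_i=0$ immediately. One small caveat: over a field whose characteristic divides $m$, the swap relations only force $y_{I,I}=\sum_{i\in I}a_i + C$ for some constant $C$; the map $(a_1,\dots,a_n,C)\mapsto(y_{I,I})$ has one-dimensional kernel spanned by $(1,\dots,1,-m)$, and the $G_f$ constraint reads $\sum_i a_i + kC = 0$. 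Since $km=n$, the kernel lies in this hyperplane and the dimension count still yields $n-1$, so your conclusion survives unchanged. Your organizational choice---bounding $\Lie(G_f)$ first and then using $\dim\Lie(\overline{G}_f)\leq\dim\Lie(G_f)+1$---is the reverse of the paper's and is equally valid.
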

	\begin{proof}
		First observe that the conditions on elements of the Lie algebra $\Lie(G_f(K))$ are obtained from the corresponding conditions for elements of $\Lie(\overline{G}_f(K))$ by substituting $\lambda(g)=1$. Let $g$ be a matrix satisfying the above conditions for all $x^1,\dots,x^k\in K^N$. Plugging in $g=e+y\delta$ and using that the form $f$ is $k$-linear, we get
		$$\delta\bigl(f(yx^1,x^2,\ldots,x^k)+\ldots+f(x^1,\ldots,x^{k-1},yx^k)\bigr)=(\lambda(g)-1)f(x^1,\ldots,x^k).$$
		Now we show that the entries of the matrix $y$ are subject to exactly the same linear dependencies, as in the case $G_{nm}$. By definition $f(e_{I_1},\dots,e_{I_k})=0$ for all indices $I_1,\dots,I_k\in\bw{m}[n]$, except the cases where $\{I_j\}$ is a partition of the set $[n]=I_1\sqcup\dots\sqcup I_k$.
		\begin{itemize}
			\item If $d(I,J)\leq m-2$, then $y_{I,J}=0$. Indeed, in this case then there is a set of pairwise disjoint indices $I_2,\dots,I_k\in\bw{m}\bigl([n]\smallsetminus I\bigr)$ such that $d(J,I_2)\geq 1,\,d(J,I_3)\geq 1$ and $d(J,I_4)=\dots=d(J,I_k)=0$. Put $x^1:=e_{J},x^l:=e_{I_l}, 2\leq l\leq k$. Then $f(x^1,yx^2,\dots,x^k)=\dots=f(x^1,x^2,\dots,yx^k)=0$. It follows that $f(yx^1,x^2,\dots,x^k)=\pm y_{I,J}=0$.
			\item If $d(I,J)=m-1$ and $I-J=H-M$, then $y_{I,J}=\pm y_{H,M}$. Here there is a set of pairwise disjoint indices
			$M,I_3,\dots,I_k\in\bw{m}\bigl([n]\smallsetminus I\bigr)$ such that $d(J,M)=1$ and $d(J,I_3)=\dots=d(J,I_k)=0$. Put $x^1:=e_{J},x^2:=e_{M},x^l:=e_{I_l}, 3\leq l\leq k$ and denote by $H$ the index $[n]\smallsetminus(J\cup I_2\cup\dots\cup I_k)$. Then $f(x^1,x^2,yx^3,\dots,x^k)=\dots=f(x^1,x^2,\dots,yx^k)=0$. It follows that $f(yx^1,x^2,\dots,x^k)+f(x^1,yx^2,x^3\dots,x^k)=0$. But $f(yx^1,x^2,\dots,x^k)=\sign(I,M,I_3,\dots,I_k)\cdot y_{I,J}$, and $f(x^1,yx^2,x^3,\dots,x^k)=\sign(J,H,I_3,\dots,I_k)\cdot y_{H,M}$.
			\item Finally, if $d(I,M)=m-1$ and $I-M=H-J$, then $y_{I,I}-y_{M,M}=y_{H,H}-y_{J,J}$. Indeed, there is a set of pairwise disjoint indices $I_3,\dots,I_k\in\bw{m}\bigl([n]\setminus(I\cup J)\bigr)$. In other words, $I,J,I_3,\dots,I_k$ is a partition of the set $[n]$. Put $x^1:=e_I,x^2:=e_J,x^l:=e_{I_l}$, where $3\leq l\leq k$. Then $$(\lambda(g)-1)=\delta(y_{I,I}+y_{J,J}+y_{I_3,I_3}+\dots+y_{I_k,I_k}).$$
			On the other hand, $H,M,I_3,\dots,I_k$ is a partition of $[n]$ too, where $I\cup J=H\cup M$. Substituting $x^1:=e_H,x^2:=e_M,x^l:=e_{I_l}$ for all $3\leq l\leq k$, we get
			$$(\lambda(g)-1)=\delta(y_{M,M}+y_{H,H}+y_{I_3,I_3}+\dots+y_{I_k,I_k}).$$
			Combining the obtained equalities, we see $y_{I,I}+y_{J,J}=y_{M,M}+y_{H,H}$.
		\end{itemize}
		
		Therefore the obtained relations are the same as the relations in the previous lemma. The matrix entries $y_{I,J}=0$ with $d(I,J)\leq m-2$ do not contribute to the dimension of the Lie algebra. The entries $y_{I,J}$ with $d(I,J)= m-1$ give the contribution equal to the number of roots of $\Phi$, namely, $(n^2-n)$. Finally, the latter item allows us to express all entries $y_{I,I}$ as linear combinations of the entries $y_{K_j,K_j}, 1\leq j\leq n$, where each fundamental root of $\Phi$ occurs among the pairwise differences of the weights $K_j$. For instance, one
		can use the weights $\{1,\dots,m-1,p\}$, $m\leq p\leq n$, and $\{1,\dots,\hat{i},\dots,m+1\}$, $1\leq i< m$, see~\cite{VavPere}. Fig.~\ref{fig:DiagWeightsL2E5} shows their location in the weight diagram $(A_5,\varpi_2)$. Therefore the dimension of the Lie algebra $\Lie(\overline{G}_f(K))$ does not exceed $n^2-n+n=n^2$. The same argument is also applicable for the case of $\Lie(G_f(K))$. It suffices to set $\lambda(g)=1$. Again, we conclude that the dimension of $\Lie(G_f(K))$ does not exceed $n^2$.
		
		To conclude the proof of the theorem, we must reduce the dimension of $\Lie(G_f(K))$. For the sake of brevity, we conceive indices $I\in\bw{m}[n]$ as roots of the corresponding representation, and we write roots $\alpha=c_1\alpha_1+\dots+c_{n-1}\alpha_{n-1}\in A_{n-1}$ in the Dynkin form $c_1\dots c_{n-1}$, where $\alpha_j$ are the simple roots of $A_{n-1}$. For example, $\delta=1\dots 1$ is the maximal root of $A_{n-1}$. Suppose $K_1$ is the highest weight of the representation, and $I_2,\dots,I_k$ is the standard partition of the set $[n]\smallsetminus K_1$ into $m$-element subsets, i.e. $I_2>I_3>\dots >I_k$. Substituting $x^1:=e_{K_1},x^2:=e_{I_2},\dots,x^k:=e_{K_k}$, we get
		$$y_{K_1,K_1}+y_{I_2,I_2}+\dots+y_{I_k,I_k}=0.$$
		Further, note that for every $j$: $K_1-I_j=c^j_1\alpha_1+\dots+c^j_{n-1}\alpha_{n-1}$. Using already proven relations $y_{I,I}-y_{M,M}=y_{H,H}-y_{J,J}$ for $I-M=H-J$, express all diagonal entries $y_{I_j,I_j}$ as linear combinations of the entries $y_{K_j,K_j}$. Thus we find a non-trivial relation among $y_{K_j,K_j}$. Below we do this for arbitrary exterior power in detail.
		
		In this notation, $K_1-I_2=12\dots m\dots 210\dots 0$, $K_1-I_3=12\dots \underbrace{m\dots m}_{m+1\textrm{ times}}\dots 210\dots 0$, and in general $K_1-I_j=12\dots \underbrace{m\dots m}_{(j-2)\cdot m+1}\dots 21\underbrace{0\dots 0}_{n-mj}$ for  $2\leq j\leq k$. Recall that our numbering of the roots $K_j$ is such that $\alpha_m=K_1-K_2,\alpha_{m+1}=K_2-K_3,\dots,\alpha_{n-1}=K_{n-m}-K_{n-m+1},\alpha_{m-1}=K_2-K_{n-m+2},\alpha_{m-2}=K_{n-m+2}-K_{n-m+3},\dots,\alpha_1=K_{n-1}-K_{n}$ (for the exterior squares $\alpha_{m-1}=\alpha_1=K_2-K_{n-m+2}$). Then for $3\leq j\leq k$, we have
		\begin{align*}
			y_{K_1,K_1}-y_{I_j,I_j}&=(y_{K_{n-1},K_{n-1}}-y_{K_{n},K_{n}})+2(y_{K_{n-2},K_{n-2}}-y_{K_{n-1},K_{n-1}})+\dots
			\\
			&\quad+(m-1)(y_{K_{2},K_{2}}-y_{K_{n-m+2},K_{n-m+2}})
			\\
			&\quad+m\bigl((y_{K_{1},K_{1}}-y_{K_{2},K_{2}})+\dots+(y_{K_{m(j-2)+1},K_{m(j-2)+1}}-y_{K_{m(j-2)+2},K_{m(j-2)+2}})\bigr)
			\\
			&\quad+(m-1)(y_{K_{m(j-2)+2},K_{m(j-2)+2}}-y_{K_{m(j-2)+3},K_{m(j-2)+3}})+\dots
			\\
			&\quad+2(y_{K_{m(j-1)-1},K_{m(j-1)-1}}-y_{K_{m(j-1)},K_{m(j-1)}})
            \\
            &\quad+(y_{K_{m(j-1)},K_{m(j-1)}}-y_{K_{m(j-1)+1},K_{m(j-1)+1}})
			\\
			&=my_{K_1,K_1}+(m-1)y_{K_2,K_2}-y_{K_{m(j-2)+2},K_{m(j-2)+2}}-\dots
			\\
			&\quad-y_{K_{m(j-1)+1},K_{m(j-1)+1}}-y_{K_{n-m+2},K_{n-m+2}}-\dots-y_{K_n,K_n},
		\end{align*}
		and for $j=2$, we have
		\begin{align*}
			y_{K_1,K_1}-y_{I_2,I_2}&=(y_{K_{n-1},K_{n-1}}-y_{K_{n},K_{n}})+2(y_{K_{n-2},K_{n-2}}-y_{K_{n-1},K_{n-1}})+\dots
			\\
			&\quad+(m-1)(y_{K_{2},K_{2}}-y_{K_{n-m+2},K_{n-m+2}})
			\\
			&\quad+m(y_{K_{1},K_{1}}-y_{K_{2},K_{2}})
			\\
			&\quad+(m-1)(y_{K_{2},K_{2}}-y_{K_{3},K_{3}})+\dots
			\\
			&\quad+2(y_{K_{m-1},K_{m-1}}-y_{K_{m},K_{m}})+(y_{K_{m},K_{m}}-y_{K_{m+1},K_{m+1}})
			\\
			&=my_{K_1,K_1}+(m-2)y_{K_2,K_2}-y_{K_3,K_3}-\dots-y_{K_{m+1},K_{m+1}}
			\\
			&\quad-y_{K_{n-m+2},K_{n-m+2}}-\dots-y_{K_n,K_n}.
		\end{align*}
		It remains to add up all the obtained equalities with the equation $y_{K_1,K_1}+y_{I_2,I_2}+\dots+y_{I_k,I_k}=0$. Thus the final equation on diagonal entries is the following: 
		\begin{multline*}
			(m(k-1)-k)y_{K_1,K_1}+\bigl((m-1)(k-1)-1\bigr)y_{K_2,K_2}-y_{K_3,K_3}-\dots-y_{K_{n-m+1},K_{n-m+1}}
			\\
			-(k-1)y_{K_{n-m+2},K_{n-m+2}}-\dots-(k-1)y_{K_n,K_n}=0.
		\end{multline*}
		This is precisely the desired non-trivial linear relation among the entries $y_{K_j,K_j}$, which, over a field of
		any characteristic, shows that the dimension of our Lie algebra is 1 smaller than the above bound. Thus $\opn{dim}\Lie(G_f(K))\leq n^2-1$, as claimed.
	\end{proof}
	
	\begin{figure}[t]
		\centering
			\includegraphics[scale=0.6]{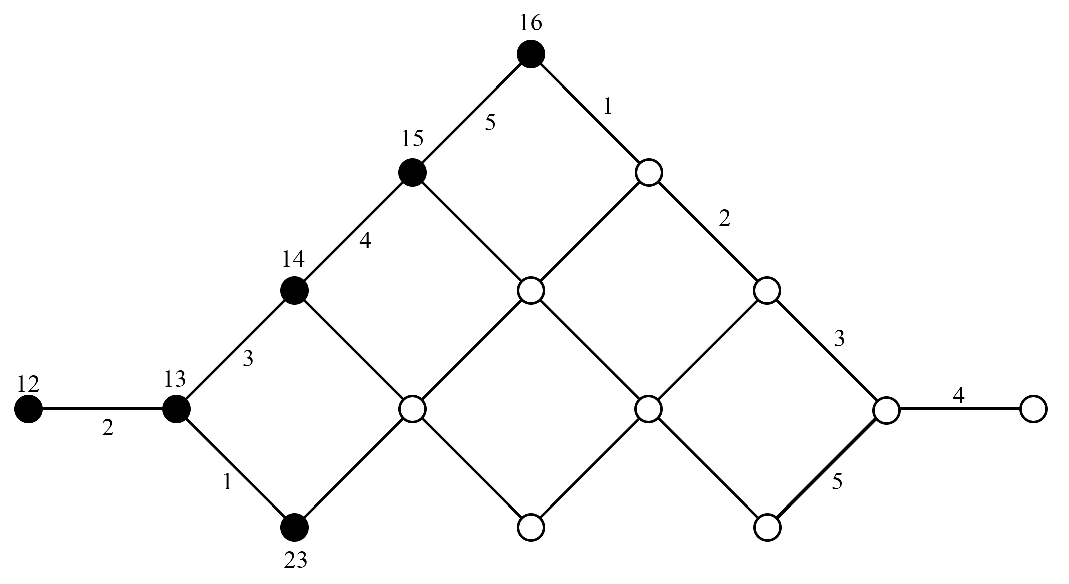}
			\caption{Diagonal weights in $(A_5,\varpi_2)$}
			\label{fig:DiagWeightsL2E5}
	\end{figure}
	
	Let us give an example of the proof calculations for the case of  $\bw{2}\E_6(R)$. Fig.~\ref{fig:DiagWeightsL2E5} shows the location of $K_j$ in the weight diagram. We have $y_{12,12}+y_{34,34}+y_{56,56}=0$ as the form is preserved.
	\begin{itemize}
		\item Since $12-34=\alpha_1+2\alpha_2+\alpha_3$, it follow that
		\begin{align*}
			y_{12,12}-y_{34,34}&=(y_{13,13}-y_{23,23})+2(y_{12,12}-y_{13,13})+(y_{13,13}-y_{14,14})
			\\
			&=2y_{12,12}-y_{14,14}-y_{23,23}.
		\end{align*}
		\item Since $12-56=\alpha_1+2(\alpha_2+\alpha_3+\alpha_4)+\alpha_5$, we have
		\begin{align*}
			y_{12,12}-y_{56,56}&=(y_{13,13}-y_{23,23})+2\bigl((y_{12,12}-y_{13,13})
			\\
			&\quad+(y_{13,13}-y_{14,14})+(y_{14,14}-y_{15,15})\bigr)+(y_{15,15}-y_{16,16})
			\\
			&=2y_{12,12}+y_{13,13}-y_{15,15}-y_{16,16}-y_{23,23}.
		\end{align*}
	\end{itemize}
	Adding up these three equations, we get a non-trivial linear relation among the entries $y_{K_j,K_j}$:
	$$y_{12,12}+y_{13,13}-y_{14,14}-y_{15,15}-y_{16,16}-2y_{23,23}=0.$$
	
	Now we verified all the conditions from Lemma~\ref{lem:Waterhouse} and are ready to complete the proof of Theorem~\ref{thm:StabAndGenerealPower}.
	
	\exGLandForms*
	\begin{proof}
		Consider the Cauchy--Binet morphism $\bw{m}$ of algebraic groups:
		$$\bw{m}\colon\GL_n\longrightarrow \GL_N.$$
		From Lemma~\ref{lem:VPKer}, it follows that the kernel of this morphism equals $\mu_m$. Proposition~\ref{prop:FormsInvariantUnderLE} implies that its image is contained in $\overline{G}_f$. Hence $\bw{m}$ induces a monomorphism of algebraic groups:
		$$\iota\colon\GL_n/\mu_m\longrightarrow \overline{G}_f.$$
		
		We wish to apply Lemma~\ref{lem:Waterhouse} to this morphism $\iota$. We know that $\dim(\bw{m}\GL_{n,K})=n^2$ (as an image of $\GL_{n,K}$ under Cauchet--Binet homomorphism with a finite kernel) for an algebraically closed field $K$. Theorem~\ref{thm:dimLie} implies that $\dim(\Lie(\overline{G}_{f,K}))\leq n^2$ with the same assumption on the field $K$. Therefore Condition~(1) of Lemma~\ref{lem:Waterhouse} holds true. As we discussed after Lemma \ref{lem:Waterhouse}, Conditions~(2) and~(3) are also satisfied.
		
		This means that we can apply Lemma~\ref{lem:Waterhouse} to conclude that $\iota$ is an isomorphism of affine group schemes over $\mathbb{Z}$. 
		
		The proof for the schemes $\bw{m}\SL_n$ and $G_f$ is similar and so it is omitted.
	\end{proof}
	
	\section{Difference between two exterior powers}\label{sec:DiffExterPwrs}
	
	The isomorphism $\iota\colon \bw{m}\GL_{n} \longrightarrow \overline{G}_{f}$ from the previous section shows that for arbitrary rings the class of transvections from $\bw{m}\GL_n(R)$ is strictly larger than the images $\bw{m}g$, $g\in\GL_n(R)$:
	\begin{align*}
		\bw{m}\bigl(\GL_n(R)\bigr) < \bw{m}\GL_n(R) \text{ for a general ring }R.
	\end{align*}
	
	Indeed, suppose $n\neq 2m$ (otherwise, one has to consider the argument for the corresponding connected component of the group). Then the exact sequence of affine group schemes
	$$1\longrightarrow\mu_m\longrightarrow\GL_n\longrightarrow\GL_n/\mu_m\longrightarrow 1$$
	gives an exact sequence of Galois cohomology
    \[
    1\longrightarrow\mu_m(R)\longrightarrow\GL_n(R)\longrightarrow\GL_n/\mu_m(R)
    \longrightarrow H^1(R,\mu_m)\longrightarrow H^1(R,\GL_n)\longrightarrow H^1(R,\GL_n/\mu_m).        
    \]
	
	The values of all these cohomology sets are well known, see~\cite[Chapter~III, \S2]{Knus},~\cite[\S9]{VavPere}, or in the case of exterior square~\cite{WaterhouseDet}. $H^1(R,\GL_n)$ classifies projective $R$-modules $P$ of rank $n$. In particular, $H^1(R,\GL_1)$ classifies invertible $R$-modules, i.e. finitely generated projective $R$-modules of rank $1$. The set $H^1(R,\GL_1)$ has a group structure induced by a tensor product. This group is called the Picard group $\opn{Pic}(R)$ of the ring $R$. Its elements are twisted forms of the free $R$-module $R$. 
	
	Let us consider the following exact sequence for description of $H^1(R,\mu_m)$:
	$$1\longrightarrow\mu_m\longrightarrow\GL_1\xrightarrow{(\blank)^m}\GL_1\longrightarrow 1,$$
	where $(\blank)^m$ is the $m^{th}$ power. Since $(\GL_1)^m(R)=R^{*m}$, we have
	$$1\longrightarrow R^{*}/R^{*m}\longrightarrow H^1(R,\mu_m)\longrightarrow\opn{Pic}(R)\longrightarrow\opn{Pic}(R),$$
	where the rightmost arrow is induced by $(\blank)^m$. Thus the cohomology group $H^1(R,\mu_m)$ classifies projective $R$-modules $P$ of rank $1$ together with the isomorphism $P^{\otimes m}=R$. 
	
	To describe the group $\GL_n/\mu_m(R)$ it remains to calculate the kernel of $H^1(R,\mu_m)\\\longrightarrow H^1(R,\GL_n)$. Observe that the morphism $\mu_m\longrightarrow\GL_n$ passes through $GL_1=\mathbb{G}_m$:
	\[
	     \xymatrix @=1pc{
			\mu_m\ar[rr]\ar[ddr]&&\GL_n\\
			\\
			&\GL_1\ar@{^(->}[uur]_{\opn{scalar}}}
	\]
	Since $H^1(R,\GL_n)$ classifies projective $R$-modules of rank $n$ and the embedding $\GL_1\hookrightarrow\GL_n$ sends $\lambda$ to $\lambda e$, the map $H^1(R,\GL_1)\longrightarrow H^1(R,\GL_n)$ sends an invertible module $P$ to $\bigoplus_{1}^n P$. Therefore the kernel of $H^1(R,\mu_m)\longrightarrow H^1(R,\GL_n)$ contains the whole group $R^*/R^{*m}$ and, in addition, elements $P$ of the Picard group $\opn{Pic}(R)$ such that $P^{\otimes m} \cong R$ and $\bigoplus_{1}^n P$ is free ($\cong R^n$).
	
	Summarizing both arguments, we see that \textit{the quotient  of $\bw{m}\GL_n(R)$ by $\bw{m}\bigl(\GL_n(R)\bigr)$ contains a copy of the group $R^*/R^{*m}$. The quotient by this group is isomorphic to a subgroup of the Picard group $\opn{Pic}(R)$ consisting of invertible modules $P$ over $R$ such that $P^{\otimes m} \cong R$ and $\bigoplus_{1}^n P$ is free}.
	
	\hspace{3mm}
	
	For the special linear group the argument is similar. The exact sequence of affine group schemes
	$$1\longrightarrow\mu_d\longrightarrow\SL_n\longrightarrow\SL_n/\mu_d\longrightarrow 1$$
	gives the exact sequence of Galois cohomology
    \[
    1\longrightarrow\mu_d(R)\longrightarrow\SL_n(R)\longrightarrow\SL_n/\mu_d(R)
    \longrightarrow H^1(R,\mu_d)\longrightarrow H^1(R,\SL_n)\longrightarrow H^1(R,\SL_n/\mu_d),
    \]
	where $d=\gcd(n,m)$. The values of all these cohomology sets are also well known, for instance see~\cite[Chapter~III, \S2]{Knus}.
	
	The determinant map $\opn{det}\colon\GL_n\longrightarrow\GL_1$ induces a map of pointed sets $(\opn{det})^1_*\colon H^1(R,\GL_n)\longrightarrow \opn{Pic}(R)$. Suppose $[T]\in H^1(R,\GL_n)$ is a class represented by a projective module $T$ of rank $n$. For any automorphism $\alpha$ of $T$, the determinant $\opn{det}(\alpha)\in R$ is the induced automorphism of the $n$-th exterior power $\bw{n}T$. Thus $(\opn{det})^1_*([T])=[\bw{n}T]$.
	
	Consider another exact sequence of groups:
	$$1\longrightarrow\SL_n(R)\longrightarrow\GL_n(R)\xrightarrow{\opn{det}} \GL_1(R)\longrightarrow 1.$$
	We describe the cohomology set $H^1(R,\SL_n)$. Let $M$ be a projective $R$-module of rank $n$ such that $\bw{n}M\cong R$. And let $\delta_M\colon \bw{n}M\longrightarrow R$ be a fixed isomorphism. An isomorphism $\psi\colon M\longrightarrow N$ is called an isomorhism of pairs $(M,\delta_M)\cong (N,\delta_N)$ if $\delta_N \circ \bw{n}\psi=\delta_M$. By $[M,\delta_M]$ denote the class of isomorphisms $(M,\delta_M)$. Then for any automorphism $\psi$ of $(M,\delta_M)$, we have $\delta_M \circ \bw{n}\psi=\delta_M$. This yields that $\opn{det}(\psi)=1$. Therefore the set $H^1(R,\SL_n)$ is determined by the classes $[M,\delta_M]$, i.e. by projective modules $M$ of rank $n$ together with the fixed isomorphism $\bw{n}M \cong R$. And the map $H^1(R,\SL_n)\longrightarrow H^1(R,\GL_n)$ corresponds to $[M,\delta_M]\mapsto [M]$.
	
	As before, we use the description of $H^1(R, \mu_d)$ in terms of $R^*/R^{*d}$ and projective modules $P$ of rank 1 such that $P^{\otimes d} \cong R$. The map $H^1(R,\mu_d)\longrightarrow H^1(R,\SL_n)$ sends a module $P$ to the pair $\bigr[\bigoplus_1^n P, \delta_P^{can}\bigr]$ where $\delta_P^{can}$ is the canonical isomorphism induced by the multiplication $\delta^{can}\colon R^{n} \longrightarrow R$.
	
	Summing up, we see that \textit{the quotient  of $\bw{m}\SL_n(R)$ by $\bw{m}\bigl(\SL_n(R)\bigr)$ contains a copy of the group $R^*/R^{* d}$. The quotient by this group consists of pairs $(P, \alpha)$ where $P$ is an element of the Picard group $\opn{Pic}(R)$ such that $P^{\otimes m} \cong R$ and $\alpha\colon \bigoplus_{1}^n P \longrightarrow R^n$ is an isomorphism such that $\delta^{can}_{P} = \delta^{can} \circ \alpha$}.
	
	\section{Exterior powers as the stabilizer of invariant forms II}\label{sec:StabII}
	
	In the previous sections, we completely analyzed the case of one invariant form. However if $\sfrac{n}{m} \not\in\mathbb{N}$, as we assume for this section, then the group $\bw{m}\GL_n(R)$ has only an \textit{ideal} of invariant forms. 
	
	Let us extend the definition of $q(x)$ from~\S\ref{sec:StabI}. Previously considered form $q(x) = q_{[n]}^{m}(x)$ is associated to the set $[n]=\{1,\dots,n\}$. In this section, we use forms associated to an arbitrary subsets of $[n]$ with fixed cardinality. Namely, we define $q^m_{V}(x)$ for an arbitrary $n_1$-subset $V\subseteq [n]$, where $\sfrac{n_1}{m}\in\mathbb{N}$:
	\begin{itemize}
		\item $q^m_V(x)=\sum \sign(I_{1}, \ldots, I_{\frac{n_1}{m}})\; x_{I_{1}}\ldots x_{I_{\frac{n_1}{m}}}$ for even $m$;
		\item $q^m_V(x)=\sum \sign(I_{1}, \ldots, I_{\frac{n_1}{m}})\; x_{I_{1}}\wedge \ldots \wedge x_{I_{\frac{n_1}{m}}}$ for odd $m$,
	\end{itemize}
	where the sums in the both cases range over all unordered partitions of the set $V$ into $m$-element subsets $I_{1}, \ldots, I_{\frac{n_1}{m}}$.
	
	As usual, $f^m_{V}(x^1,\dots,x^k)$ denotes the \textit{$[$ full $]$ polarization} of $q^m_V(x)$, where $k:=\frac{n_1}{m}$. We ignore the power $m$ in the notation $f^m_{V}(x^1,\dots,x^k)$ and $q^m_V(x)$ if it is clear from context.
	
	Let $n = lm +r$ where $l,r \in \mathbb{N}$ and $l$ is the maximal such. Consider the ideal $F = F_{n,m}$ of the ring $\mathbb{Z}[x_I]$ generated by the forms $f_{V}(x^1,\dots,x^k)$ for all possible $ml$-element subsets $V \subsetneq [n]$.  We define the extended Chevalley group $\overline{G}_F(R)$ as the group of linear transformations preserving the ideal $F$:
	\begin{multline*}
		\overline{G}_F(R):=\{g\in\GL_N(R)\mid\text{ there exist } \lambda_{V_1},\dots,\lambda_{V_p}\in R^*, c(V_k,V_l)\in R \text{ such that }
		\\
		f_{V_j}(gx^1,\dots,gx^k)=\lambda_{V_j}(g)f_{V_j}(x^1,\dots,x^k)+\sum\limits_{l\neq j}c({V_j},{V_l})\cdot f_{V_l}(x^1,\dots,x^k)
        \\
        \text{ for all }j \text{ satisfying } 1\leq j\leq p\}.
	\end{multline*}
	
	First we must show that $\overline{G}_F$ is a group scheme. We use the following standard argument. 
	
	Let $f_1,\dots,f_s$ be arbitrary polynomials in $t$ variables with coefficients in a commutative ring $R$. We are interested in the linear changes of variables $g\in\GL_t(R)$ that preserve the condition that all these polynomials simultaneously vanish. In other
	words, we consider all $g\in\GL_t(R)$ preserving the ideal $A$ of the ring $R[x_1,\dots,x_t]$ generated by $f_1,\dots,f_s$.  It is well known (see, e.g.~\cite[Lemma 1]{DixonAffSch} or \cite[Proposition~1.4.1]{WaterhouseDet}) that the set $G_A(R)=\Fix_R(A)=\Fix_R(f_1,\dots,f_s)$ of all such linear variable changes $g$ forms a group. For any $R$-algebra $S$ with $1$, we can consider $f_1,\dots,f_s$ as polynomials with coefficients in $S$. Thus the group $G(S)$ is defined for all $R$-algebras. It is clear that $G(S)$ depends functorially on $S$. It is easy to provide examples showing that $S\mapsto G(S)$ may fail to be an affine group scheme over $R$. This is due to the fact that $G_A(R)$ is defined by congruences, rather than equations, in its matrix entries. However in Theorem~1.4.3 of~\cite{WaterhouseDet} a simple sufficient condition was found, that guarantees that $S\mapsto G(S)$ is an affine group scheme. Denote by $R[x_1,\dots,x_t]_r$ the submodule of polynomials of degree at most $r$. The following lemma is Corollary~1.4.6 in~\cite{WaterhouseDet}.
	\begin{lemma}\label{lem:StabAffGrpSch}
		Let $f_{1}, \dots, f_{s} \in \mathbb{Z}[x_{1}, \dots , x_{t}]$ be polynomials of degree at most $r$ and let $A$ be the ideal they generate. Then for the functor $S \mapsto \Fix_{S}(f_{1}, \dots, f_{s})$ to be an affine group scheme, it suffices that the rank of the intersection $A\cap R[x_{1}, \dots, x_{t}]_{r}$ does not change under reduction modulo any prime $p \in \mathbb{Z}$. This is true in particular if all generators of $A$ remain independent modulo $p$ for all prime $p$.
	\end{lemma}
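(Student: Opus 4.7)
The plan is to follow the classical Waterhouse strategy that converts a functor of ideal stabilizers into an affine group scheme once the defining ideal is compatible with base change. Concretely, I would first reformulate membership in $\Fix_S(f_1,\ldots,f_s)$ as a list of polynomial conditions: an element $g\in\GL_t(S)$ preserves the ideal $A_S=(f_1,\ldots,f_s)\trianglelefteq S[x_1,\ldots,x_t]$ if and only if $f_i(gx)\in A_S$ for every $i$. Since $f_i$ has degree at most $r$, $f_i(gx)$ again lies in $S[x_1,\ldots,x_t]_r$, so the condition is equivalent to $f_i(gx)\in A_S\cap S[x_1,\ldots,x_t]_r$.

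Next I would use the rank hypothesis to force the operation ``intersect with polynomials of degree at most $r$'' to commute with base change. The quotient $\mathbb{Z}[x_1,\ldots,x_t]_r/(A\cap\mathbb{Z}[x_1,\ldots,x_t]_r)$ is a finitely generated abelian group; the assumption that the rank of $A\cap\mathbb{Z}[x_1,\ldots,x_t]_r$ does not drop modulo any prime is exactly the condition that this quotient has no $p$-torsion for any $p$, hence is free. Consequently $A\cap\mathbb{Z}[x_1,\ldots,x_t]_r$ is a direct $\mathbb{Z}$-module summand of $\mathbb{Z}[x_1,\ldots,x_t]_r$, which gives a canonical identification $(A\cap\mathbb{Z}[x_1,\ldots,x_t]_r)\otimes_{\mathbb{Z}} S \;=\; A_S\cap S[x_1,\ldots,x_t]_r$ inside $S[x_1,\ldots,x_t]_r$ for every commutative ring $S$.

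With this compatibility in hand, I pick a $\mathbb{Z}$-basis $b_1,\ldots,b_M$ of $A\cap\mathbb{Z}[x_1,\ldots,x_t]_r$ and complete it to a basis $b_1,\ldots,b_N$ of $\mathbb{Z}[x_1,\ldots,x_t]_r$. Expanding $f_i(gx)=\sum_{j} c_{i,j}(g)\,b_j$, the coefficients $c_{i,j}(g)$ are polynomials in the matrix entries of $g$. The condition $f_i(gx)\in A_S\cap S[x_1,\ldots,x_t]_r$ becomes the vanishing of $c_{i,j}(g)$ for all $j>M$ and all $i$, a finite system of polynomial equations in the entries of $g$ (adjoined to the usual polynomial data cutting out $\GL_t$). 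Representability and functoriality then follow, and closure under multiplication and inversion is inherited from $\GL_t$, yielding the affine group scheme structure. For the ``in particular'' clause, if the $f_i$ stay linearly independent modulo every prime, then the $\mathbb{Z}$-span of the generators inside $\mathbb{Z}[x_1,\ldots,x_t]_r$ has constant rank under reduction, which, combined with the fact that $A\cap\mathbb{Z}[x_1,\ldots,x_t]_r$ coincides with this span whenever the $f_i$'s have no lower-degree $\mathbb{Z}$-syzygies, delivers the rank-constancy hypothesis.

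The essential obstacle is the middle step: without the rank-constancy, the intersection $A\cap\mathbb{Z}[x]_r$ fails to base-change to $A_S\cap S[x]_r$, and a $g$ satisfying the polynomial equations over $S$ need not actually preserve $A_S$; conversely, the preservation condition over $S$ may pick up congruences rather than equations in the matrix entries, destroying representability. The rank hypothesis is precisely what eliminates this pathology by promoting the submodule of ``low-degree members'' of the ideal to a direct summand that is stable under arbitrary base change.
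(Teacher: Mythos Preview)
The paper does not prove this lemma; it is quoted verbatim as Corollary~1.4.6 of Waterhouse's paper \cite{WaterhouseDet}, with Theorem~1.4.3 of the same reference doing the real work. So there is no ``paper's own proof'' to compare against; your outline is in fact a sketch of Waterhouse's argument, and the overall architecture (rank constancy $\Rightarrow$ the low-degree piece of $A$ is a direct summand $\Rightarrow$ membership conditions become polynomial equations over any base) is the correct one.

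That said, two steps in your sketch are not fully closed. First, you pass from ``$A\cap\mathbb{Z}[x]_r$ is a direct summand of $\mathbb{Z}[x]_r$'' to the identification $(A\cap\mathbb{Z}[x]_r)\otimes S = A_S\cap S[x]_r$ without justification of the inclusion $A_S\cap S[x]_r \subseteq (A\cap\mathbb{Z}[x]_r)\otimes S$. The summand property only guarantees that $(A\cap\mathbb{Z}[x]_r)\otimes S$ injects into $S[x]_r$; showing that no \emph{new} low-degree elements of $A_S$ appear over $S$ is a separate step, and this is exactly where the hypothesis on $A_{\mathbb{F}_p}\cap\mathbb{F}_p[x]_r$ (as opposed to merely the image of $A\cap\mathbb{Z}[x]_r$ modulo $p$) must be invoked. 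You have conflated ``rank of the reduction of $A\cap\mathbb{Z}[x]_r$'' with ``rank of $A_{\mathbb{F}_p}\cap\mathbb{F}_p[x]_r$''; these can differ, and the lemma's hypothesis concerns the latter. Second, in the ``in particular'' clause you insert the caveat ``whenever the $f_i$'s have no lower-degree $\mathbb{Z}$-syzygies,'' which is not part of the stated hypothesis; this signals that you have not actually deduced the corollary from the main criterion. Both gaps are handled in Waterhouse's Theorem~1.4.3, which you would need to consult (or reprove) to make the argument complete.
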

	
	We apply this lemma to the case of the ideal $F$ in $\mathbb{Z}[x_I]$.
	\begin{lemma}
		Let $n=ml+r$, where $m,l\in\mathbb{N}$. Then the functor $R\mapsto\overline{G}_F(R)$ is an affine group scheme over $\mathbb{Z}$.
	\end{lemma}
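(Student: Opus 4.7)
The plan is to deduce the statement from Lemma~\ref{lem:StabAffGrpSch}. The ideal $F$ is generated by the forms $f_V = f^m_V(x^1,\dots,x^k)$ as $V$ ranges over the $ml$-subsets of $[n]$, and each generator is homogeneous of multi-degree $(1,\dots,1)$ in the $k=l$ families of Grassmann variables (hence of total degree $l$ in the ambient polynomial ring). The Waterhouse criterion of Lemma~\ref{lem:StabAffGrpSch} therefore reduces the problem to verifying that the generating system $\{f_V\}$ remains linearly independent modulo every prime $p\in\mathbb{Z}$.

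The key combinatorial observation is that $f_V$ is a $\pm 1$-linear combination of monomials of the form $x^1_{I_1}\cdots x^k_{I_k}$, one for each ordered partition $(I_1,\dots,I_k)$ of $V$ into $m$-element blocks; from any such monomial one recovers $V = I_1\sqcup\cdots\sqcup I_k$ uniquely. Consequently, two distinct $ml$-subsets $V_1\neq V_2$ produce forms $f_{V_1}$ and $f_{V_2}$ whose monomial supports are \emph{disjoint}. Linear independence over $\mathbb{Z}$ is then automatic, and the $\pm 1$-coefficients guarantee that the independence persists under reduction modulo every prime $p$. Since all generators are homogeneous of the same degree, the intersection $F\cap R[x^1,\dots,x^k]_l$ reduces to the free $R$-module on $\{f_V\}$, of rank $\binom{n}{ml}$ independently of $R$.

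Applying Lemma~\ref{lem:StabAffGrpSch} now yields that the stabilizer functor $R\mapsto\Fix_R(F)$ is an affine group scheme over $\mathbb{Z}$. The uniqueness afforded by the disjoint-support structure further ensures that, for each $g$ preserving $F$, the decomposition $g\cdot f_{V_j}=\lambda_{V_j}f_{V_j}+\sum_{l\neq j}c(V_j,V_l)f_{V_l}$ appearing in the definition of $\overline{G}_F$ has uniquely determined coefficients; cutting out $\overline{G}_F$ inside $\Fix_R(F)$ by the additional principal-open conditions $\lambda_{V_j}\in R^*$ then presents $\overline{G}_F$ as an affine subgroup scheme of $\GL_N$. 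The only step requiring genuine care is the short combinatorial verification of disjoint supports, and no deeper obstruction is anticipated.
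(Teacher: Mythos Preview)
Your approach matches the paper's: both invoke Lemma~\ref{lem:StabAffGrpSch} and verify that the generators $f_V$ stay linearly independent modulo every prime, you via disjoint monomial supports (each monomial $x^1_{I_1}\cdots x^k_{I_k}$ determines $V=I_1\sqcup\cdots\sqcup I_k$), the paper via evaluation at a point supported on a single partition of $V_j$---these are the same combinatorial fact. Your closing remark about cutting out $\overline{G}_F$ inside $\Fix_R(F)$ by the principal-open conditions $\lambda_{V_j}\in R^*$ goes beyond the paper's proof, which tacitly identifies $\overline{G}_F$ with $\Fix_R(F)$; be cautious here, since a principal open locus in a group scheme is not automatically closed under the group law (the diagonal entry of a product of matrices need not be a unit when the factors' diagonal entries are), so that step would require a separate check.
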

	\begin{proof}
		Let us show that for any prime $p$ the polynomials $f_{V_j}$ are linear independent modulo $p$. Indeed, specializing $x_I$ appropriately, we can guarantee that one of these polynomials takes value $\pm 1$, while all other vanish. Let $I_{1}\sqcup \dots\sqcup I_{l} = V_j$ be a partition of some $ml$-element subset $V_j\subset[n]$. Set $x_{I_{j}}:=1$ for $i=1, \dots, l$ and $x_I :=0$ otherwise. The monomial $x_{I_1}\dots x_{I_l}$ occurs only in one form corresponding to the partition $V_j=I_{1}\sqcup \dots\sqcup I_{l}$. Thus the value of the polynomial $f_{V_j}$ is $\sign(I_{1}, \ldots, I_l)=\pm 1$.
	\end{proof}
	
	Our immediate goal is to prove the coincidence of $\overline{G}_F$ and $\bw{m}\GL_n$. Lemma~\ref{lem:Waterhouse} is useful for this again. Using the results of the previous two sections, we only must verify coincidence of $\bw{m}\GL_n(K)$ and $\overline{G}^0_F(K)$ for algebraically closed fields and smoothness of $\overline{G}_F$. 
	
	The proof of the following proposition is completely analogous to the proof of Proposition~\ref{prop:LGLeqStabgoodOverK}.
	\begin{prop}\label{prop:LGLeqStabIdealOverK}
		Suppose $K$ is an algebraically closed field. Then
		$$\bw{m}\GL_n(K)=\overline{G}^0_F(K).$$
	\end{prop}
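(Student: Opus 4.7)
The strategy is to mirror the proof of Proposition~\ref{prop:LGLeqStabgoodOverK}: first establish the inclusion $\bw{m}\GL_n(K) \leq \overline{G}^0_F(K)$, then invoke the maximality of $\bw{m}\GL_n(K)$ from Lemma~\ref{lem:VPMax} to force equality. The hypothesis $\sfrac{n}{m} \notin \mathbb{N}$ excludes the exceptional case $n = 2m$, so the first case of Lemma~\ref{lem:VPMax} applies, declaring $\bw{m}\GL_n(K)$ maximal among connected closed subgroups of $\GL_N(K)$.

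The substantive step is showing that $\bw{m}\GL_n(K)$ preserves the ideal $F$. I would interpret each generator $f_V$, for $V \subsetneq [n]$ of size $ml$, as the canonical invariant form of Proposition~\ref{prop:FormsGood} attached to the coordinate subspace $\bw{m}K^V \subseteq \bw{m}K^n$; intrinsically, to any $ml$-dimensional subspace $U \subset K^n$ one can associate a form $f_U$ via the canonical multiplication $(\bw{m}U)^{\otimes l} \to \bw{ml}U \cong K$, and Proposition~\ref{prop:FormsGood} identifies it with $f_V$ when $U = K^V$. The exterior-power action permutes this family: $\bw{m}g \cdot f_V$ is (a scalar multiple of) the form $f_{g(K^V)}$ attached to the generally non-coordinate $ml$-subspace $g(K^V)$. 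One then observes that each $f_U$ lies in the $K$-linear span of the coordinate forms $\{f_W : W \in \binom{[n]}{ml}\}$, because the $\GL_n(K)$-orbit of a single $f_V$ spans an irreducible sub-representation of the relevant polynomial module which already contains all $f_W$. Consequently $\bw{m}g$ preserves the ideal $F$. A more pedestrian alternative is to verify invariance directly on the Chevalley generators $\bw{m}t_{i,j}(\xi)$ and $\bw{m}d_i(\xi)$ using~\eqref{eq:m} and~\eqref{eq:diagm}: acting by an exterior transvection on a monomial of $f_V$ produces a monomial either lying in $f_V$ again or in $f_{V'}$ for a uniquely determined $V'$ obtained from $V$ by the swap $i\leftrightarrow j$, and the signs match those computed in the proof of Proposition~\ref{prop:FormsInvariantUnderLE}.

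Combined with connectedness of $\bw{m}\GL_n(K)$ this gives $\bw{m}\GL_n(K) \leq \overline{G}^0_F(K)$. To close the argument one also needs properness $\overline{G}^0_F(K) \subsetneq \GL_N(K)$, which can be checked by exhibiting a transvection $t_{I,J}(\xi)\in\GL_N(K)$ with $I,J \in \bw{m}[n]$ chosen so that the resulting substitution moves a monomial of some $f_V$ to a monomial whose index set cannot be completed to a partition of any $ml$-subset of $[n]$, and hence does not appear in any $f_W$. Maximality from Lemma~\ref{lem:VPMax} then yields the reverse inclusion $\overline{G}^0_F(K) \leq \bw{m}\GL_n(K)$. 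The main obstacle I anticipate is the representation-theoretic part of step one, namely verifying that $\bw{m}g \cdot f_V$ really lies in the ideal generated by the \emph{coordinate} forms $\{f_W\}$ rather than in the wider span of all subspace forms $\{f_U\}$; the spanning argument via irreducibility of the associated $\GL_n$-subrepresentation handles this conceptually, while the alternative generator-by-generator verification is combinatorially delicate but entirely concrete.
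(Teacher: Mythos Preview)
Your strategy matches the paper's exactly---indeed the paper omits the proof, saying only that it is ``completely analogous to the proof of Proposition~\ref{prop:LGLeqStabgoodOverK}''. The one new ingredient you correctly isolate, invariance of the ideal $F$ under $\bw{m}\GL_n(K)$, is not spelled out there either.

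Your anticipated obstacle dissolves once you observe that $f_V(x^1,\dots,x^l)$ is precisely the $e_V$-coordinate of $x^1\wedge\dots\wedge x^l$ under the $\GL_n$-equivariant multiplication $(\bw{m}K^n)^{\otimes l}\to\bw{ml}K^n$; hence $f_V\bigl(\bw{m}g\cdot x^1,\dots,\bw{m}g\cdot x^l\bigr)=\sum_W(\bw{ml}g)_{V,W}\,f_W(x^1,\dots,x^l)$, so the $K$-span of the \emph{coordinate} forms $\{f_W\}$ is already $\GL_n$-stable, with no appeal to irreducibility needed and no detour through non-coordinate subspaces. Your generator-by-generator alternative also works and is simpler than you fear: since each monomial of $f_V$ contains $i$ in exactly one factor, $f_V(\bw{m}t_{i,j}(\xi)\cdot x^1,\dots)$ is linear in $\xi$ when $i\in V$, $j\notin V$, with $\xi$-coefficient $\pm f_{(V\setminus\{i\})\cup\{j\}}$.
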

	
	To verify that the scheme $\overline{G}_F$ is smooth one needs to evaluate the dimension of the Lie algebra. As above, it is possible to identify the Lie algebra $\Lie(\overline{G}_F(K))$ with a homomorphism kernel sending $\delta$ to $0$ in $K[\delta]$. Thus $\Lie(\overline{G}_F(K))$ consists of the matrices $g=e+y\delta$ where $y\in\M_N(K)$ satisfying the following conditions
	$f_{V_j}(gx^1,\ldots,gx^k)=\lambda_{V_j}(g)f_{V_j}(x^1,\ldots,x^k)+\sum\limits_{l\neq j}c(V_j,V_l)f_{V_l}(x^1,\ldots,x^k)$ for $1\leq j\leq p$ and $x^1,\ldots,x^k\in K^N$.
	\begin{theorem}\label{dimLieIdeal}
		For any field $K$ the dimension of the Lie algebra $\Lie(\overline{G}_F(K))$ does not exceed $n^2$.
	\end{theorem}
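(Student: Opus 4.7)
The plan is to follow the proof of Theorem~\ref{thm:dimLie}, adapted to the many-form setting. Writing $g = e+y\delta$ with $y\in\M_N(K)$, the defining condition expands by $k$-linearity to the requirement that for every $ml$-subset $V\subsetneq[n]$ there exist scalars $\mu_V,\ \tilde{c}(V,V')\in K$ (one family per $V$) with
\[
\sum_{i=1}^{k} f_V\bigl(x^1,\dots,yx^i,\dots,x^k\bigr) = \mu_V\, f_V(x^1,\dots,x^k) + \sum_{V'\neq V}\tilde{c}(V,V')\, f_{V'}(x^1,\dots,x^k).
\]
My aim is to extract from these equations the same three families of linear relations on the entries $y_{I,J}$ that appeared in Lemma~\ref{lem:VPdimLie} and Theorem~\ref{thm:dimLie}: vanishing when $d(I,J)\leq m-2$, a pairing $y_{I,J}=\pm y_{H,M}$ when $d(I,J)=m-1$, and the four-term diagonal relation $y_{I,I}+y_{J,J}=y_{H,H}+y_{M,M}$ for appropriate quadruples. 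Given these, the counting of Lemma~\ref{lem:VPdimLie} yields the bound $n^2$.

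The central technical point is that when one substitutes basis vectors $x^s=e_{I_s}$, a form $f_{V'}$ attains value $\pm 1$ precisely when $(I_1,\dots,I_l)$ is a partition of $V'$, and then $V'$ is uniquely determined by $(I_1,\dots,I_l)$; thus at most one mixing term can survive on the right. For $d(I,J)\leq m-2$ I would choose $V\supseteq I\cup J$ of size $ml$ (feasible since $n\geq 2m$ gives $l\geq 2$ and hence $ml\geq 2m\geq|I\cup J|$), partition $V\setminus I=I_2\sqcup\dots\sqcup I_l$ placing the $\geq 2$ elements of $J\setminus I$ so that both $I_2\cap J$ and $I_3\cap J$ are non-empty, and substitute $(e_J,e_{I_2},\dots,e_{I_l})$. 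The overlaps $J\cap I_2$ and $J\cap I_3$ kill every $f_{V'}(e_J,e_{I_2},\dots,e_{I_l})$ on the right, and also all terms $f_V(e_J,\dots,ye_{I_j},\dots)$ on the left for $j\geq 2$, so the equation collapses to $y_{I,J}\sign(I,I_2,\dots,I_l)=0$. For $d(I,J)=m-1$ the same strategy, with the unique element of $J\setminus I$ absorbed into one $m$-part $M$, leaves exactly two terms on the left, $y_{I,J}$ and $y_{H,M}$ with $H=(I\setminus J)\cup(M\setminus J)$, and again the right vanishes, producing the pairing relation.

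For diagonal entries, substituting a partition $(e_{I_1},\dots,e_{I_l})$ of $V$ kills every $f_{V'}$ with $V'\neq V$, so the equation reads $\sum_j y_{I_j,I_j}=\mu_V$. Two partitions of the \emph{same} $V$ share the same $\mu_V$; subtracting recovers the four-term relation of Case~3 in Theorem~\ref{thm:dimLie}. The linear system on $y$ is then identical to that in the Pl\"ucker proof, so the counting of Lemma~\ref{lem:VPdimLie} produces $\dim\Lie(\overline{G}_F(K))\leq n^2$. I do not expect to recover an extra $-1$ here, because the $\mu_V$ are independent parameters indexed by $V$ rather than one global scalar, so the final argument in Theorem~\ref{thm:dimLie} that exploits a partition of all of $[n]$ has no analogue.

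The main obstacle I anticipate is the careful bookkeeping in the first two cases: confirming that \emph{every} mixing term $\tilde{c}(V,V')f_{V'}$ on the right genuinely vanishes for the chosen substitution. This reduces to the assertion that the tuple $(e_J,e_{I_2},\dots,e_{I_l})$ is not a partition of any $ml$-subset, which follows from the engineered overlap between $J$ and the $I_j$; the only technicality is verifying that a suitable $V$ and partition can always be arranged given $l\geq 2$ and the size bound on $J\setminus I$.
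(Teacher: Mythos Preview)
Your proposal is correct and follows essentially the same route as the paper's proof: expand $g=e+y\delta$ via $k$-linearity, substitute basis vectors indexed by carefully chosen partitions of a fixed $ml$-subset $V$, and extract precisely the three families of linear relations (vanishing for $d(I,J)\le m-2$, pairing for $d(I,J)=m-1$, four-term diagonal identities), then invoke the counting from Lemma~\ref{lem:VPdimLie}. Your explicit remark that the engineered overlaps kill \emph{all} mixing terms $\tilde c(V,V')f_{V'}$ on the right, and your observation about why no extra $-1$ is available here, are points the paper leaves implicit; the only residual subtlety you flag---arranging the partition when $l=2$---is handled by taking $V\supseteq I$ with exactly one element of $J\setminus I$ inside $V$ and one outside (possible since $r\ge1$ forces $n\ge 2m+1$), rather than $V\supseteq I\cup J$.
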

	
	\begin{proof}
		Let $g$ be a matrix satisfying the above conditions for all $1\leq j\leq p$ and $x^1,\dots,x^k\in K^N$. Plugging in $g=e+y\delta$ and using that the form $f_{V_j}$ is $k$-linear, we get
		\begin{multline*}
			\delta\bigl(f_{V_j}(yx^1,x^2,\ldots,x^k)+\ldots+f_{V_j}(x^1,\ldots,x^{k-1},yx^k)\bigr)\\
			=(\lambda_{V_j}(g)-1)f_{V_j}(x^1,\ldots,x^k)+\sum\limits_{l\neq j}c(V_j,V_l)f_{V_l}(x^1,\ldots,x^k)
		\end{multline*}
		for all $1\leq j\leq p$.
		
		Now we show that the entries of the matrix $y$ are subject to the same linear dependences, as in Theorem~\ref{thm:dimLie}. By the very definition of the forms, $f_{V_j}(e_{I_1},\ldots,e_{I_k})=0$ except the cases when $\{I_l\}$ is a partition of the set $V_j=I_1\sqcup\ldots\sqcup I_k$.
		\begin{itemize}
			\item If $d(I,J)\leq m-2$ ($|I\cup J|\geq m+2$), then $y_{I,J}=0$. Indeed, then there is a set of pairwise disjoint indices $I_2,\ldots,I_k\in\bw{m}\bigl(V_j\smallsetminus I\bigr)$ such that $d(J,I_2)\geq 1,\, d(J,I_3)\geq 1$ and $d(J,I_4)=\dots=d(J,I_k)=0$. Set $x^1:=e_{J},x^l:=e_{I_l}, 2\leq l\leq k$. Then $f_{V_j}(x^1,yx^2,\ldots,x^k)=\ldots=f_{V_j}(x^1,x^2,\ldots,yx^k)=0$. It follows that $f_{V_j}(yx^1,x^2,\ldots,x^k)=\pm y_{I,J}=0$.
			\item If $d(I,J)=d(M,H)=m-1$, then $y_{I,J}=\pm y_{H,M}$. Here there is a set of pairwise disjoint indices $M,I_3,\ldots,I_k \in \bw{m}\bigl(V_j\smallsetminus I\bigr)$ such that $d(J,M)=1$ and $d(J,I_3)=\dots=d(J,I_k)=0$. Set $x^1:=e_{J},x^2:=e_{M},x^l:=e_{I_l}, 3\leq l\leq k$ and denote by $H$ the index $V_j\smallsetminus(J\cup I_2\cup\ldots\cup I_k)$. Then $f_{V_j}(x^1,x^2,yx^3,\ldots,x^k)=\ldots=f_{V_j}(x^1,x^2,\ldots,yx^k)=0$. It follows that $f_{V_j}(yx^1,x^2,\ldots,x^k)+f_{V_j}(x^1,yx^2,x^3\ldots,x^k)=0$. But $f_{V_j}(yx^1,x^2,\ldots,x^k)=\sign(I,M,I_3,\ldots,I_k)\cdot y_{I,J}$, and $f_{V_j}(x^1,yx^2,x^3,\ldots,x^k)=\sign(J,H,I_3,\ldots,I_k)\cdot y_{H,M}$.
			\item Finally, for diagonal entries the following condition holds $y_{I,I}-y_{M,M}=y_{H,H}-y_{J,J}$, where $d(I,J)=d(H,M)=0$ and $I\cup J=H\cup M$. In this case there is a set of pairwise disjoint indices $I_3,\ldots,I_k\in\bw{m}\bigl(V_j\setminus(I\cup J)\bigr)$. In other words, $I,J,I_3,\dots,I_k$ is a partition of the set $V_j$. Put $x^1:=e_I,x^2:=e_J,x^l:=e_{I_l}$ where $3\leq l\leq k$. Since $f_{V_l}(x^1,\ldots,x^k)=0$ for all $l\neq j$, we get
			$$(\lambda_{B_j}(g)-1)=\delta(y_{I,I}+y_{J,J}+y_{I_3,I_3}+\ldots+y_{I_k,I_k}).$$
			On the other hand, $H,M,I_3,\dots,I_k$ is partition of the set $V_j$ too, where $I\cup J=H\cup M$. Substituting $x^1:=e_H,x^2:=e_M,x^l:=e_{I_l}$ for all $3\leq l\leq k$, we have
			$$(\lambda_{B_j}(g)-1)=\delta(y_{M,M}+y_{H,H}+y_{I_3,I_3}+\ldots+y_{I_k,I_k}).$$
			Combining the obtained qualities, we see that $y_{I,I}+y_{J,J}=y_{M,M}+y_{H,H}$.
		\end{itemize}
		Thus, as in the proof of Theorem~\ref{thm:dimLie}, it turns out that the dimension of the Lie algebra $\Lie(\overline{G}_F(K))$ does not exceed $n^2$: the entries $y_{I,J}$ do not contribute to the dimension when $d(I,J)\leq m-2$, they make a contribution $n(n-1)$ when $d(I,J)=m-1$ and, finally, they make a contribution $n$ for $d(I,J)=m$.
	\end{proof}
	
	Consequently we verified all the condition from Lemma~\ref{lem:Waterhouse} and can conclude that $\bw{m}\GL_n$ equals the stabilizer of $F$. The proof is similar to the proof of Theorem~\ref{thm:exGLandForms}.
	\exGLandFormsIdeal*
	
	\section{Normalizer Theorem}\label{sec:transgood}

    We modify our approach in proving Theorem~\ref{thm:NNTran} by contrasting it with Theorems~\ref{thm:exGLandForms} and~\ref{thm:exGLandFormsIdeal}. Specifically, in Theorem~\ref{thm:NNTran2}, we establish that the functors of $R$-points coincide for the group schemes under consideration, for arbitrary ring $R$.
    
    \begin{theorem}\label{thm:NNTran2}
    If $n \geq 4$ and $\sfrac{n}{m}$ is an integer greater than 2, then
        for any commutative ring $R$, we have
        \[
            N\bigl(\bw{m}\E_n\bigr)(R) = N\bigl(\bw{m}\SL_{n}\bigr)(R) = \Tran\bigl(\bw{m}\E_n, \bw{m}\SL_{n}\bigr)(R)
            = \Tran\bigl(\bw{m}\E_n, \bw{m}\GL_{n}\bigr)(R) = \bw{m}\GL_{n}(R),
        \]
		where all normalizers and transporters are taken inside the group scheme $\GL_{\binom{n}{m}}$.
    \end{theorem}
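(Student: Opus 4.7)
The plan is to prove the five-fold equality by sandwiching each intermediate functor between $\bw{m}\GL_n(R)$ and itself. The essential inputs are Theorem~\ref{thm:exGLandForms}, which identifies $\bw{m}\GL_n$ with $\overline{G}_f$ and $\bw{m}\SL_n$ with $G_f$; Theorem~\ref{thm:normalityPS}, giving scheme-theoretic normality of $\bw{m}\E_n$ inside $\bw{m}\GL_n$; and Proposition~\ref{prop:FormsGoodRing}, the ring-level rigidity statement characterising $f$ up to a scalar.

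First I would record the easy inclusions. Theorem~\ref{thm:normalityPS} applied over every $R$-algebra $\tilde R$ gives $\bw{m}\E_n(\tilde R) \trianglelefteq \bw{m}\GL_n(\tilde R)$, hence $\bw{m}\GL_n(R) \subseteq N(\bw{m}\E_n)(R)$. Under the isomorphism of Theorem~\ref{thm:exGLandForms}, the multiplier character $\lambda \colon \overline{G}_f \longrightarrow \mathbb{G}_m$ realises $\bw{m}\SL_n = \ker(\lambda)$ as a normal subgroup scheme of $\bw{m}\GL_n$, so $\bw{m}\GL_n(R) \subseteq N(\bw{m}\SL_n)(R)$. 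Since $\bw{m}\E_n \subseteq \bw{m}\SL_n \subseteq \bw{m}\GL_n$, the remaining inclusions
\[
N(\bw{m}\E_n)(R) \subseteq \Tran(\bw{m}\E_n, \bw{m}\GL_n)(R), \qquad N(\bw{m}\SL_n)(R) \subseteq \Tran(\bw{m}\E_n, \bw{m}\SL_n)(R) \subseteq \Tran(\bw{m}\E_n, \bw{m}\GL_n)(R)
\]
are automatic.

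The main obstacle, which collapses both chains simultaneously, is the reverse inclusion $\Tran(\bw{m}\E_n, \bw{m}\GL_n)(R) \subseteq \bw{m}\GL_n(R)$. For $g$ in the transporter I would study the pulled-back form $F(x^1,\ldots,x^k) := f(gx^1,\ldots,gx^k)$, which again lies in $R[x^1,\ldots,x^k]_{(1,\ldots,1)}$. For each $t \in \bw{m}\E_n(R)$, the transporter hypothesis places $gtg^{-1} \in \bw{m}\GL_n(R) = \overline{G}_f(R)$, so there is $\mu_t \in R^*$ with $f(gtg^{-1}y^1,\ldots,gtg^{-1}y^k) = \mu_t\, f(y^1,\ldots,y^k)$; substituting $y^i := gx^i$ rewrites this as $F(tx^1,\ldots,tx^k) = \mu_t\, F(x^1,\ldots,x^k)$. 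Thus $F$ is $\bw{m}\E_n(R)$-semi-invariant, and Proposition~\ref{prop:FormsGoodRing} forces $F = a \cdot f$ for a single scalar $a \in R$.

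To finish, I would check that $a \in R^*$: picking any partition $[n] = I_1 \sqcup \cdots \sqcup I_k$ into $m$-element subsets and evaluating both sides at $x^i := g^{-1} e_{I_i}$ gives $F(x^1,\ldots,x^k) = f(e_{I_1},\ldots,e_{I_k}) = \pm 1$, forcing $a$ to be invertible. Hence $g \in \overline{G}_f(R) = \bw{m}\GL_n(R)$, completing the sandwich. The decisive point is that the \emph{ring-level} rigidity of Proposition~\ref{prop:FormsGoodRing} converts a scheme-theoretic transporter condition into a pointwise form-stabilisation statement, so the entire argument works over $R$ directly, bypassing any appeal to Waterhouse-type lemmas or reduction to algebraically closed fields such as those exploited in Sections~\ref{sec:StabI}--\ref{sec:StabII}.
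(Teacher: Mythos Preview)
Your argument is correct and uses the same essential mechanism as the paper's proof: pull back the form $f$ along $g$, observe that the transporter hypothesis makes this pull-back $\bw{m}\E_n$-semi-invariant, and invoke Proposition~\ref{prop:FormsGoodRing} to force it to be a scalar multiple of $f$. The organisation differs in two respects worth recording. First, the paper imports the equalities $N(\bw{m}\E_n)(R) = \Tran(\bw{m}\E_n,\bw{m}\SL_n)(R) = N(\bw{m}\SL_n)(R)$ from the external reference~\cite{LubStepSub} (Proposition~\ref{prop:LubStepSub}), whereas you obtain all forward inclusions internally from Theorem~\ref{thm:normalityPS} and the identification $\bw{m}\SL_n = \ker\lambda$; placing $\Tran(\bw{m}\E_n,\bw{m}\GL_n)(R)$ at the top of both chains means a single reverse inclusion collapses everything. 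Second, the paper proves invertibility of the scalar by running the same argument with $g^{-1}$ (using that the transporter is a group), while you evaluate at $x^i = g^{-1}e_{I_i}$ to exhibit a unit value directly. Both variants are valid; your packaging is a little more self-contained and economical, while the paper's route separates the $\SL$ and $\GL$ transporter cases and records the determinant identity $\det(z^g)=\det(z)=1$ explicitly.
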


Before proving the theorem, we address the issue of group-theoretic vs. scheme-theoretic objects appearing in the theorem. Classically, the theorem is formulated with normalizers and transporters as abstract groups. For example, the (group of $R$-points of the) transporter
\[
    \Tran(\bw{m}\E_n, \bw{m}\SL_n)(R) :=  \Big\{g\in \GL_{\binom{n}{m}}(R)\mid z^g\in \bw{m}\SL_n(\tilde{R})
    \text{ for all $R$-algebras }\tilde{R}\text{ and }z\in \bw{m}\E_n(\tilde{R})\Big\}
\]
should be replaced with the transporter (as an abstract group) 
\[
    \Tran(\bw{m}\E_n(R), \bw{m}\SL_n(R)) :=  \Big\{g\in \GL_{\binom{n}{m}}(R)\mid z^g\in \bw{m}\SL_n(R)
    \text{ for all }z\in \bw{m}\E_n(R)\Big\}.
\]
In this presentation, we immediately see the inclusion 
$$\Tran(\bw{m}\E_n, \bw{m}\SL_n)(R) \leq \Tran(\bw{m}\E_n(R), \bw{m}\SL_n(R)).$$
The next proposition~\cite[Lemma 4.1, Proposition 4.3]{LubStepSub} presents other more nontrivial inclusions between different version of the normalizers and transporters.

    \begin{prop}\label{prop:LubStepSub}
    In the assumptions of Theorem~\ref{thm:NNTran} and~\ref{thm:NNTran2}, the following inclusions hold:
    $$\begin{alignedat}{3}
        N\bigl(\bw{m}\E_n(R)\bigr) &= \Tran\bigl(\bw{m}\E_n(R), \bw{m}\SL_{n}(R)\bigr) &\geq N\bigl(\bw{m}\SL_{n}(R)\bigr),\\
        N\bigl(\bw{m}\E_n\bigr)(R) &= \Tran\bigl(\bw{m}\E_n, \bw{m}\SL_{n}\bigr)(R) &= N\bigl(\bw{m}\SL_{n}\bigr)(R).
    \end{alignedat}$$
    \end{prop}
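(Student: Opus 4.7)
The plan is to reduce the five-fold equality to a single remaining inclusion by exploiting Proposition~\ref{prop:LubStepSub}, and then to establish that inclusion by combining the form-theoretic description $\bw{m}\GL_n\cong\overline{G}_f$ from Theorem~\ref{thm:exGLandForms} with the uniqueness statement Proposition~\ref{prop:FormsGoodRing}.

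First I would collect the easy inclusions. Proposition~\ref{prop:LubStepSub} directly supplies $N\bigl(\bw{m}\E_n\bigr)(R)=\Tran\bigl(\bw{m}\E_n,\bw{m}\SL_n\bigr)(R)=N\bigl(\bw{m}\SL_n\bigr)(R)$. Theorem~\ref{thm:normalityPS}, applied functorially over every $R$-algebra $\tilde R$, gives $\bw{m}\GL_n(R)\subseteq N\bigl(\bw{m}\E_n\bigr)(R)$ and also $\bw{m}\GL_n(R)\subseteq\Tran\bigl(\bw{m}\E_n,\bw{m}\GL_n\bigr)(R)$, while the containment $\bw{m}\SL_n\leq\bw{m}\GL_n$ yields $\Tran\bigl(\bw{m}\E_n,\bw{m}\SL_n\bigr)(R)\subseteq\Tran\bigl(\bw{m}\E_n,\bw{m}\GL_n\bigr)(R)$. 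Thus the whole cycle collapses once I establish the single reverse inclusion $\Tran\bigl(\bw{m}\E_n,\bw{m}\GL_n\bigr)(R)\subseteq\bw{m}\GL_n(R)$.

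For this key step, fix $g\in\Tran\bigl(\bw{m}\E_n,\bw{m}\GL_n\bigr)(R)\subseteq\GL_N(R)$ and consider the conjugated multilinear form $f^g(y^1,\ldots,y^k):=f(gy^1,\ldots,gy^k)\in R[y^1,\ldots,y^k]_{(1,\ldots,1)}$. By the transporter condition specialized to $\tilde R=R$, for every $h\in\bw{m}\E_n(R)$ the element $ghg^{-1}$ lies in $\bw{m}\GL_n(R)=\overline{G}_f(R)$ by Theorem~\ref{thm:exGLandForms}, hence rescales $f$ by some $\mu(h)\in R^*$. A direct change of variables $y^i\mapsto gy^i$ then upgrades this to $f^g(hy^1,\ldots,hy^k)=\mu(h)\cdot f^g(y^1,\ldots,y^k)$, so $f^g$ is $\bw{m}\E_n(R)$--semi-invariant. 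Proposition~\ref{prop:FormsGoodRing} then forces $f^g=\lambda\cdot f$ for some $\lambda\in R$.

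To promote $\lambda$ to a unit, I would use the tautological identity $(f^g)^{g^{-1}}=f$, which combined with $f^g=\lambda f$ yields $\lambda\cdot f^{g^{-1}}=f$; comparing the coefficient of any single monomial $x^1_{I_1}\cdots x^k_{I_k}$ arising from an ordered partition of $[n]$, which equals $\pm 1$ in $f$, forces $\lambda$ to divide $1$ in $R$, so $\lambda\in R^*$. Hence $g$ scales $f$ by an invertible scalar, meaning $g\in\overline{G}_f(R)=\bw{m}\GL_n(R)$, which closes the chain. The main subtlety is that the transporter has target $\bw{m}\GL_n$ rather than $\bw{m}\SL_n$, so the scaling character $\chi(h)=\mu(ghg^{-1})$ of $f^g$ is a priori nontrivial; fortunately Proposition~\ref{prop:FormsGoodRing} is strong enough to absorb arbitrary semi-invariance characters, and the final coefficient comparison then recovers invertibility of $\lambda$ uniformly over $R$.
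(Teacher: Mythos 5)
Your proposal does not prove the stated proposition: it is a sketch of the proof of Theorem~\ref{thm:NNTran2}, and its very first step \emph{assumes} Proposition~\ref{prop:LubStepSub} (``Proposition~\ref{prop:LubStepSub} directly supplies $N\bigl(\bw{m}\E_n\bigr)(R)=\Tran\bigl(\bw{m}\E_n,\bw{m}\SL_n\bigr)(R)=N\bigl(\bw{m}\SL_n\bigr)(R)$''). Relative to the statement you were asked to prove this is circular --- the second displayed row is exactly that chain of equalities. For the record, the paper gives no proof of this proposition either; it is imported verbatim from \cite[Lemma 4.1, Proposition 4.3]{LubStepSub}, so there is no internal argument to compare against, but a self-contained proof would have to supply the content you bypassed.

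Concretely, the only easy parts of the proposition are $N\bigl(\bw{m}\E_n(R)\bigr)\leq\Tran\bigl(\bw{m}\E_n(R),\bw{m}\SL_n(R)\bigr)$ (since $\bw{m}\E_n(R)\leq\bw{m}\SL_n(R)$) and $N\bigl(\bw{m}\SL_n(R)\bigr)\leq\Tran\bigl(\bw{m}\E_n(R),\bw{m}\SL_n(R)\bigr)$. The substance is the reverse inclusion $\Tran\bigl(\bw{m}\E_n(R),\bw{m}\SL_n(R)\bigr)\leq N\bigl(\bw{m}\E_n(R)\bigr)$: given $g$ with $\bw{m}\E_n(R)^g\leq\bw{m}\SL_n(R)$ one must conclude $\bw{m}\E_n(R)^g=\bw{m}\E_n(R)$, which rests on the normality $\bw{m}\E_n\trianglelefteq\bw{m}\GL_n$ (Theorem~\ref{thm:normalityPS}) combined with a commutator--level argument of the kind developed in \cite{LubStepSub}; a further argument is needed for the scheme-theoretic equality with $N\bigl(\bw{m}\SL_n\bigr)(R)$ in the second row. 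None of the form-theoretic machinery you deploy ($\overline{G}_f$, Theorem~\ref{thm:exGLandForms}, Proposition~\ref{prop:FormsGoodRing}) bears on any of this --- indeed $\bw{m}\GL_n$ does not even occur in the proposition. Your argument that $\Tran\bigl(\bw{m}\E_n,\bw{m}\GL_n\bigr)(R)\subseteq\overline{G}_f(R)$, including the trick of plugging in $g^{-1}$ to force $\lambda\in R^*$, is essentially the paper's third step in the proof of Theorem~\ref{thm:NNTran2} and is sound in that context, but it answers a different question.
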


The question of when all these groups coincide is a quite tricky. For example, \cite[Proposition 4.5]{LubStepSub} proves it in a general situation for algebras over infinite fields; and \cite{LubStepSubE} proves it for our case for an arbitrary $R$ with $\opn{char}(R) \neq 2$.

\begin{proof}[Proof of Theorem~\ref{thm:NNTran2} $($and Theorem~\ref{thm:NNTran}$)$]
First, the equality of the first three sets follows from Proposition~\ref{prop:LubStepSub}. Moreover, a standard Lie-theoretic argument~\cite[Chapter 4, Cor.3.9]{SanRitt2017} shows that $N(\bw{m}\SL_n)(R)$ is a group scheme, so all three of them are.

Second, we prove the inclusion $\bw{m}\GL_{n}(R) \leq N\bigl(\bw{m}\SL_n\bigr)(R)$ via Theorem~\ref{thm:StabAndGenerealPower}. Indeed, $g \in \bw{m}\GL_n(R)$ implies that $g$ stabilizes the form $f$ up to a scalar $\lambda(g)$. Then, for an arbitrary $R$-algebra $\tilde{R}$, the element $g b g^{-1}$ stabilizes $f$ as $\lambda(g)\lambda(g^{-1}) = 1$ and $b \in \bw{m}\SL_n(\tilde{R})$ stabilizes $f$.
		
Third, we show the inclusion $\Tran\bigl(\bw{m}\E_n, \bw{m}\SL_{n}\bigr)(R) \leq \bw{m}\GL_n(R)$. We pick an element $g \in \Tran\bigl(\bw{m}\E_n,\bw{m}\SL_{n}\bigr)(R)$ and an element $h\in \bw{m}\E_n(\tilde{R})$. Then $a:= ghg^{-1}$ belongs to $\bw{m}\SL_{n}(\tilde{R})$, and thus
$$f(ax^1,\ldots,ax^k)=f(x^1,\ldots,x^k).$$
Substituting $(gx^1,\dots,gx^k)$ for $(x^1,\dots,x^k)$, we get
$$f(ghx^1,\ldots,ghx^k)=f(gx^1,\ldots,gx^k).$$
Consider the form $D\colon R^N\times\dots\times R^N\longrightarrow R$ defined by the rule
$$D(x^1,\ldots,x^k):=f(gx^1,\ldots,gx^k).$$
By our assumption, one has
$$D(hx^1,\ldots,hx^k)=D(x^1,\ldots,x^k)$$
for all $h\in\bw{m}\E_n(\tilde{R})$. Hence the form $D$ is invariant under the action of $\bw{m}\E_n(\tilde{R})$. Thus Proposition~\ref{prop:FormsGoodRing} shows us
$$D(x^1,\ldots,x^k)=\lambda\cdot f(x^1,\ldots,x^k)\text{ for some }\lambda\in \tilde{R}.$$
As the transporter is a group, we can plug in $g^{-1}$ instead of $g$. Thereby we conclude that $\lambda$ is invertible. This shows that $g$ belongs to the group $\overline{G}_f(\tilde{R})$. But initially $g\in\GL_N(R)$, so $g$ belongs to $\overline{G}_f(R)$ which by Theorem~\ref{thm:exGLandForms} coincides with $\bw{m}\GL_n(R)$.

Finally, the equality $\Tran\bigl(\bw{m}\E_n, \bw{m}\SL_{n}\bigr)(R) = \Tran\bigl(\bw{m}\E_n, \bw{m}\GL_{n}\bigr)(R)$ follows from Proposition~\ref{prop:FormsGoodRing} and Theorem~\ref{thm:exGLandForms}. Indeed, if $z^g$ (with $z$ and $g$ are from $\tilde{R}$-points of the group schemes) belongs to $\bw{m}\GL_{n} \cong \overline{G}_f$, then the scalar of semi-invariancy is $\det(z^g) = \det(z) = 1$. Therefore $z^g$ belongs to $G_f \cong \bw{m}\SL_{n}$.
\end{proof}
\begin{remark}
We turn to the structure theory of Lie groups for proving that $N(\bw{m}\SL_n)$ is a group. Alternatively, we can employ the proved isomorphism $N(\bw{m}\SL_n) \cong G_{f}$ to deduct explicit equations, as in~\cite{LubNek18}, for the functor $N(\bw{m}\SL_n)$ and, using Jacobi's complementary formula, verify that they cut out a group scheme.
\end{remark}
\begin{remark}
The equivalence $\Tran\bigl(E(\Phi, -), G(\Phi, -)\bigr) \cong \Tran\bigl(E(\Phi, -), \overline{G}(\Phi, -))$ holds in a general situation. It is enough to use the argument of \cite[Lemma 4.1]{LubStepSub} and immediate generalization of the main theorem of \cite{HazVav} to the extended Chevalley group $\overline{G}(\Phi, -)$.
\end{remark}
		


	
\end{document}